\tikzset{FlowChart/.style={
startstop/.style = {rectangle, draw, fill=red!30,
                    minimum width=2cm, minimum height=1cm,
                    on chain, join=by arrow},
  process/.style = {rectangle, rounded corners, draw, 
                    text width=3.5cm, minimum height=1cm, align=center,
                    on chain, join=by arrow},
    arrow/.style = {thick,-Triangle}
        }   }
\renewcommand{\Box}{\framebox{\rule{0.3em}{0.0em}}}
\newtheorem{theorem}{Theorem}[section]
\newtheorem{theorem*}{Theorem}[subsubsection]
\newtheorem{lemma}{Lemma}[section]
\newtheorem{proposition}{Proposition}[section]
\newtheorem{example}{Example}[section]
\newtheorem{definition}{Definition}[section]
\newtheorem{assumption}{Assumption}[section]
\newtheorem{algorithm}{Algorithm}[section]
\DeclareMathOperator*{\esssup}{ess\,sup}
\DeclareMathOperator*{\essinf}{ess\,inf}
\newcommand{\setd}{{ d \kern -.15em l}}
\newcommand{\hatsetd}{ d \hat{\kern -.15em l }}
\newcommand{\dd}{\mathsf {d\kern -0.07em l}}
\newcommand{\bgeqn}{\begin{eqnarray}}
\newcommand{\edeqn}{\end{eqnarray}}
\newcommand{\bgeq}{\begin{eqnarray*}}
\newcommand{\edeq}{\end{eqnarray*}}
\newcommand{\R}{{\rm I\!R}}
\newcommand{\inmat}[1]{\mbox{\rm {#1}}}
\def\supp{{\rm supp}}
\newcommand{\be}{\begin{equation}}
\newcommand{\ee}{\end{equation}}
\def\bbe{{\Bbb{E}}} 
\renewcommand{\Box}{\hfill \rule{2.3mm}{2.3mm}}
\renewcommand{\thefootnote}{\fnsymbol{footnote}}
\title{ 
Preference Robust Generalized Shortfall Risk Measure 
Based on the Cumulative Prospect Theory When the Value Function and Weighting Functions Are Ambiguous \footnote{This project is supported by Hong Kong RGC grant 14500620.}
}
\author{
 Sainan Zhang and Huifu Xu \footnote{Department of Systems Engineering and Engineering Management,The Chinese University of Hong Kong, Shatin, N.T., Hong Kong. Email:  snzhang@se.cuhk.edu.hk, hfxu@se.cuhk.edu.hk.}
}
\date{\today}
\begin{document}

\maketitle

\renewcommand{\thefootnote}{\arabic{footnote}}

\begin{abstract}
{\color{black} 
The utility-based shortfall risk (SR) measure introduced by F\"olmer and Schied \cite{FoS02} 
has been recently extended by Mao and Cai \cite{MaC18} to cumulative prospect theory 
(CPT) based SR in order to  better capture a decision maker's utility/risk preference. 
In this paper, we consider a situation
where information on the value function and/or the weighting functions in the 
CPT based SR
is incomplete.
Instead of using partially available information to 
construct an approximate value function and weighting functions,
we propose a robust approach to define a generalized shortfall risk  which is based on a tuple of 
the worst case value/weighting functions from
ambiguity sets of plausible 
value/weighting functions identified 
via available information.
The ambiguity set may be constructed with elicited preference information (e.g.~pairwise comparison questionnaires) and subjective judgement, and the ambiguity reduces as more and more preference information is  obtained.
Under some moderate conditions, 
we demonstrate how the subproblem of the 
robust shortfall risk measure 
can be calculated by solving a linear programming problem.
To examine the performance of the proposed robust model and 
computational scheme, we 
carry out some 
numerical experiments and report 
preliminary test results. 
This work may be viewed as an extension of 
recent research of preference robust optimization 
models to the CPT based SR.
}
\end{abstract}

\textbf{Key words.} 
Preference robust GSR,
ambiguity set,  pairwise comparison,
tractable formulation

\section{Introduction}
\setcounter{equation}{0}

Let $\xi: (\Omega,{\cal F},\mathbb{P})\to\R$ be a random variable representing a financial loss and $l:\R\to \R$
be a convex, non-decreasing and non-constant {\em loss/disutility} function. 
The utility-based shortfall risk measure (SR) of $\xi$ is defined as
\bgeqn
\label{eq:defi-SR}
\inmat{(SR)}  \quad \inmat{SR}_{l,\lambda}^{P}(\xi):=\inf \{x\in \R:
\bbe_P[l(\xi-x)]\leq \lambda
\},
\edeqn
where the expectation is taken w.r.t.~the probability distribution of $\xi$, i.e., $P=\mathbb{P}\circ \xi^{-1}$
and $\lambda$ is the maximal tolerable expected utility loss.
The shortfall risk, denoted by $\inmat{SR}_{l,\lambda}^{P}(\xi)$, 
is the minimum cash injection to the financial position ($-\xi$)
such that the expected loss of the new position falls below the specified level $\lambda$.
The definition is introduced by F\"olmer and Schied \cite{FoS02} and has received 
wide attentions in risk management and finance 
not only because it covers some well-known risk measures such as 
value at risk (VaR), conditional value at risk (CVaR) and entropic risk measure, but also 
better captures tail losses
(by a proper choice of loss function) \cite{FoS04}. 
Moreover,  it satisfies invariance under randomization and can be used more appropriately for dynamic measurement of risks over time, see \cite{GSW08, We06}.

The SR model is essentially 
based on Von Neumann-Morgenstein's (VNM) expected utility theory \cite{VNM47} 
if we regard the loss function 
as a disutility function 
($l(y):=-u(-y)$).  
The expected utility theory
assumes that the decision maker's preference satisfies four
axiomatic properties (completeness,
transitivity, continuity, and independence).
In the literature of behavioural economics, it is well known that the theory contradicts some
experimental studies where most decision makers 
tend to underweight medium and high probabilities
and overweight low probabilities of extreme outcomes (Elsberg \cite{El1961} and Allais paradoxes \cite{Al53})
and this prompts various remedies/modifications of the VNM's expected utility model. Rank dependent expected utility (RDEU) theory (\cite{Qui82} and \cite[Chaper 5]{Qui12}) and cumulative prospect  theory 
(Tversky and Kahneman \cite{TvK92}, 
\cite[page 158]{WaT93} and \cite[Definition 1]{KSW11}) are subsequently proposed 
to address the drawbacks of the VNM's classical model.
The definition of SR in (\ref{eq:defi-SR})
inherits the drawbacks of the VNM's expected utility model and this prompts 
introduction of generalized shortfall risk measure (GSR) based on CPT:
\bgeqn
\label{eq:PR-IPC-CPT}
\inmat{(GSR-CPT)}  \quad
{\color{black}\inmat{SR}_{v,\lambda}^{w^-,w^+}(\xi)}
:=  &\inf\limits_{x\in \R} & x \nonumber \\
&{\rm s.t.}&  \mathbb
E_{w^-w^+}[v(\xi-x)]\leq \lambda,
\edeqn
where $v$ is a value function 
corresponding to disutility function 
and $w^-, w^+$ are probability weighting functions and 
\bgeqn
\label{eq:defi-bbe-wwv}
 \bbe_{w^-w^+}[v(\xi)]
 =\int_{0}^{\infty}w^+(\mathbb P(v(\xi)\geq t))dt
 -\int_{-\infty}^{0} w^-(\mathbb P(v(\xi)\leq t))dt.
\edeqn
Note that when $w^-(t)=1-w^+(1-t)$, $\forall t\in [0,1]$ and $v$ is a utility function,  
the cumulative prospect theory reduces to rank dependent 
expected utility theory (RDEU) \cite[page 19]{TuF02}. Consequently, we call 
the resulting shortfall risk calculated by  
(\ref{eq:PR-IPC-CPT}) the GSR-RDEU.
The GSR-CPT model 
is first introduced by Mao and Cai \cite{MaC18} although their actual definition is slightly more complex. Since the weighting functions distort/modify the
cumulative
distribution function of  $v(\xi)$,
$\bbe_{w^-w^+}[v(\xi)]$ is also called the
CPT-based distortion risk measure and consequently
the GSR-CPT is also known as  a generalized shortfall risk measured induced by the 
CPT {\color{black}based distortion risk measure}, see \cite{MaC18} for details. Mao and Cai \cite{MaC18} motivate 
their CPT based GSR 
by interpreting it as the regulator's requirement of 
the minimal capital 
 to cover a risk faced by a financial institution 
 or an insurance company. In that case, the regulator
may have 
different attitudes to 
the shortfall risk 
and the over-required
 capital risk and subsequently 
adopt different value functions 
and 
distortion functions to evaluate their losses and related probabilities.

The formulation of SR in
(\ref{eq:defi-SR}) is closely related 
to insurance premium calculation model. 
To see this, consider the case that $\xi: (\Omega,{\cal F},\mathbb{P})\to\R_+$ where 
$\xi$ represents an insurer's liability 
and $u:\R\to \R$ is
a non-decreasing utility function. The insurance premium problem
is to find
$x=:\Pi(\xi)$ such that $x$ solves the indifference equation
\bgeqn
\label{eq:insure-premium}
\bbe[u(c_0+x-\xi)]= u(c_0),
\edeqn
where $c_0$ is the initial wealth of the insurer and  
the indifference equation means that 
the insurer's expected utility of the overall wealth is unchanged after committing
to the insurance.
Under some moderate conditions of $u$, 
the indifference equation
is equivalent to 
\bgeqn
\label{eq:insure-premium-opt}
\Pi(\xi) =\inf \{x\in \R:
\bbe_P[-u(c_0-\xi+x)]\leq -u(c_0)\}, 
\edeqn
where the minimum is taken in the sense that the insurer is keen to minimize the premium in order to attract more customers. 
Obviously (\ref{eq:insure-premium-opt}) corresponds to 
the formulation (\ref{eq:defi-SR}) with $l(y)=-u(-y)$, 
{\color{black}$\lambda =-u(c_0)$
and $c_0=0$}, see details in our earlier paper \cite{ZhX21}.
Like the definition of SR, the insurance premium 
model (\ref{eq:insure-premium}) is also 
based on VNM's expected utility theory and hence has drawbacks as the SR model. 

Heilpern \cite{Hei03} seems to be the first to realize the issue
and propose a new insurance premium calculation model
based on the RDEU. Specifically 
he defines a RDEU generalization of zero utility principle as the solution of
$$
\bbe_{w}[u(c_0+x-\xi)]=u(c_0),
$$ 
where $\bbe_{w}[u(\xi)]:=\int_{\R} u(x)dw(F_{\xi}(x))$
based on a single weighting function $w$,
and derives the properties of the defined insurance premium, such as translation invariance, positive homogeneity,
independent additiveness and sub-additiveness.
Kaluszka and Krzeszowiec \cite{KaK12} extend 
the formulation to CPT based premium principal by lifting 
the constraints on the relationship between the weighting functions,
and demonstrate that their premium principle
enjoys the important properties such as
non-excessive loading, no unjustified risk loading and translation invariance and positive homogeneity.
Nardon and Pianca \cite{NaP19} 
introduce a premium principal based on the continuous cumulative prospect theory,
show that the principal retains translation invariance and positive homogeneity 
and discuss several applications to the pricing of insurance contracts. 
Since the new definitions can be put under the general mathematical framework of (GSR-CPT), we regard all of them as  generalized SR although their focuses might be different.

In this paper, we take on this stream of research but with 
a different focus. We 
consider a situation where
the value function and/or the weighting functions in 
(GSR-CPT) are ambiguous. As noted by Mao and Cai in the motivation of their GSR model, 
there could be a case that
the regulator of a market 
might have a number of value/weighting  functions in mind but
is short of complete information 
as to which one may capture its true risk preference.
If the number of plausible 
value/weighting  functions is finite, then one may 
use model (\ref{eq:PR-IPC-CPT}) to compute 
the generalized shortfall risk measure one by one and then 
pick up the one which suits the DM's preference best. 
However, this approach may not work when the number is infinite.
Likewise, the primary goal of 
incorporating CPT in the insurance premium calculation model is to 
better reflect the insurer's  risk preference.
However, when it comes to practical applications of the model, 
it is often difficult to determine which particular value function and/or pair of weighting functions 
will
capture the insurer's true risk preference.
In other words, 
when the (GSR-CPT) model is applied to practical decision making problems, there could  
be an ambiguity of which particular 
tuple of value functions and weighting functions at hand to be used to cater for 
the decision maker's risk preference.

There are potentially two ways to do this. One is to use partially available information such as pairwise comparisons to 
identify the values of the value functions and/or the weighting functions 
at a discrete set of points and then 
construct an approximate value function and 
a pair of approximate weighting functions by interpolation.
This kind of approach is widely used in economics for identifying an approximate utility function, see for instance
Clemen and Reilly \cite{ClR01}.
The other is to use  partially available information to construct a set of
plausible value functions/weighting functions \cite{HuS17,WaX20} 
and base the 
shortfall risk on the worst ones. 
In this paper, we proceed with the latter which is known as robust approach. 
We call it preference robust model to distinguish it from distributionally 
robust models in
the literature of shortfall risk measures and distortion measures
(see e.g.~Guo and Xu \cite{GuX18}, Cai et al.~\cite{CLM20}, and Pesenti et al.~\cite{PWW20}).
Delage et al.~\cite{DGX18} seem to be the first to consider a preference robust version of 
SR  (\ref{eq:defi-SR}). By assuming that the decision maker's risk preference can be characterized by 
SR or more generally convex/coherent risk measures (when $l$ is convex, SR is a convex risk measure),
they use 
DM's risk preference information 
elicited via 
certainty equivalents
and the DM's risk preference on large tail losses
to construct  an ambiguity 
set of loss functions and then use the latter to define a preference 
robust SR. This paper takes a step further to consider the (GSR-CPT) model.

To this end, we assume the decision maker's risk preference
can be represented by the GSR based on CPT which is not convex in general.
We then use the DM's 
preferences characterized by the GSR to identify a class of 
value/weighting functions and subsequently use the latter to define/calculate 
the preference robust GSR (PRGSR). 
There two ways to define PRGSR. One is to use each pair of 
value/weighting functions from the ambiguity to calculate 
a GSR-CPT via (\ref{eq:PR-IPC-CPT}) and then consider the worst 
one (with largest value),
see (\ref{eq:PRGRS-CPT-equiv}). 
The other is to replace $\bbe_{w^-w^+}[v(\xi-x)]$ in 
(\ref{eq:PR-IPC-CPT}) by the worst distorted 
expected value function in (\ref{eq:PR-IPC-CPT}) and 
define the corresponding shortfall risk value (the optimal value) as the PRGSR,
see (\ref{eq:PRGRS-CPT}).
We adopt the latter and demonstrate that the two definitions 
are equivalent (Proposition~\ref{prop:PGSR-robust-equal}).

A key ingredient of the research is  construction 
of the ambiguity set of value/weighting functions.
Differing from Delage et al.~\cite{DGX18}, here
a DM's preference is described 
by both the value function and the weighting functions.
As such, 
we must infer
not only the value function (corresponding to the loss function in the SR model) 
but also the weighting functions based on 
the decision maker's preferences.
Moreover, the CPT based SR is not  
a convex risk measure in general 
when 
the value functions are strictly S-shaped.
All these pose new challenges to the 
construction of the ambiguity set and development of
computational schemes for calculating the preference 
robust shortfall risk measure, this paper aims to address them.
We propose to use certainty equivalent and pairwise comparison approaches 
to infer the value function for a fixed pair of 
weighting functions and then for a fixed value function, 
 we construct a ball of weighting functions
 centered at a nominal weighting function under some  
 pseudo-metric.
 The latter is based on the fact that
 in the literature of behavioural economics, 
 weighting functions of decision makers are mostly similar \cite{TvK92}.

Another important component of this work is to 
 develop computational methods which can be effectively
 used to calculate the proposed preference robust shortfall risk measure. 
To this end, we derive 
reformulations of the robust shortfall risk measure
when the ambiguity set of value functions 
and the ambiguity set of weighting functions
take specific forms. 
Since the choice of worst case 
value function and worst case weighting functions 
inevitably results in
an optimization problem with bilinear objective function, we reformulate the latter as a linear program by replacing the bilinear terms with new variables.
To examine the effectiveness of the ways proposed for constructing 
the ambiguity sets and the subsequent computational schemes, we carry out some numerical tests on an artificially constructed academic example. The preliminary results show that  convergence
of the worst case value functions and weighting functions to their true counterparts 
as the ambiguity reduces (with more preference 
information being elicited or the radius of 
the ball of weighting functions driven to zero).

The rest of the paper is organized as follows.
Section \ref{Sec:robust-GSR-CPT}
introduces the preference robust GSR model 
and discusses the equivalent definition and 
the main properties of the robust GSR.
Section \ref{sec3:ambiguity_VW}
discusses construction of the ambiguity sets of both the
value functions and the weighting functions. 
Section \ref{sec:Computational-schemes}
presents details 
of computational schemes for calculating the 
preference robust shortfall risk measure 
via reformulations. 
Finally, Section \ref{sec5:case_study} 
reports 
numerical test results.

Throughout the paper,
we use $\mathscr V$ 
to denote the set of all value functions $v:\R\rightarrow \R$,
which are strictly increasing with $v(0)=0$.
To facilitate properties of the value function over
the negative half line and positive half, we write 
$v^-(t)$ for $v(t)$ when
$t<0$,
and $v^+(t)$ for the value function when
$t>0$.
Furthermore, we use 
$\mathscr W$ to denote
the set of all weighting/distortion functions $w:[0,1]\rightarrow [0,1]$,
which are strictly increasing 
with $w(0)=0,w(1)=1$,
${\cal W}_{inS}$
the  subset of weighting functions
which are inverse $S$-Shaped, that is,
concave in $(0,p^*)$ 
and convex in $(p^*,1)$ with some $p^*\in (0,1)$. 
In some cases, we need to discuss the derivatives of the weighting functions and consequently use
$\psi^-(t)$
and $\psi^+(t)$
to denote the derivatives of $w^-$ and $w^+$ respectively. 
Finally, by convention, we use
${\cal L}^0(\R)$ to represent 
the space of measurable random variables over $\R$.

In this paper, we use a number of acronyms 
for both new notions and mathematical models 
to facilitate exposition/reading. Below is a list.
\begin{itemize}
    \item SR -- shortfall risk measure
    \item CPT -- cumulative prospect theory
    \item GSR -- generalized shortfall risk measure
    \item GSR-CPT -- generalized shortfall risk measure based on cumulative prospect theory
    \item GSR-RDEU -- generalized shortfall risk measure based on rank dependent expected utility theory
    \item PRGSR -- preference robust generalized shortfall risk measure
    \item PRGSR-CPT -- preference robust generalized shortfall risk measure based on cumulative prospect theory
    \item Worst-GSR-CPT -- worst generalized shortfall risk measure based on cumulative prospect theory
    \item PRGSR-CPT-S -- specific preference robust generalized shortfall risk measure based on cumulative prospect theory
    
    \item PRGSR-CPT-S-Dis -- specific preference robust generalized shortfall risk measure based on cumulative prospect theory for discrete random variable
\end{itemize}
When we refer to the mathematical formulation of a particular concept, we use a round parenthesis, e.g.,
(SR) refers to the shortfall risk model defined in (\ref{eq:defi-SR}).

\section{Preference robust GSR-CPT}
\label{Sec:robust-GSR-CPT}

We begin by a formal definition of the preference robust
generalized shortfall risk measure based on CPT (PRGSR-CPT).

\begin{definition}
[PRGSR-CPT]
Let ${\cal V}\times{\cal W}\times{\cal W}\subset {\mathscr V}\times {\mathscr W}\times {\mathscr W}$ be ambiguity sets of the value function and weighting functions.
The preference robust generalized shortfall risk measure based on cumulative prospect theory 
of random variable  
$\xi\in {\cal L}^0(\R)$ associated with ${\cal V}\times{\cal W}\times{\cal W}$  is
defined as 
\bgeqn
\label{eq:PRGRS-CPT}
\inmat{(PRGSR-CPT)} \quad \rho_{{\cal V}\times{\cal W}}(\xi)
:= 
 &\inf\limits_{x\in \R} & x \nonumber \\
&{\rm s.t.}&  \sup_{(v,w^-,w^+)\in {\cal V}\times {\cal W} \times {\cal W}} \mathbb
E_{w^-w^+}[v(\xi-x)]\leq 0.
\edeqn
\end{definition}

In this definition, the worst case distorted expected value of 
$v(\xi-x)$ is used in the constraint.
Here we make a blanket assumption 
that the robust constraint function in (\ref{eq:PRGRS-CPT}) is well-defined. Note that in
formulation (\ref{eq:PRGRS-CPT}), 
we focus on the case that $w^-,w^+$ belong to the same ambiguity set ${\cal W}$ to simplify the discussions. All of the results developed in this paper can be applied to ambiguity sets of weighting functions without this restriction.
Note also that the ambiguity sets in this definition are given. We will come back shortly to discuss about how they may be constructed based on a decision maker's preference information.

There is an alternative way  to define the PRGSR. 
For each tuple 
$(v,w^-,w^+)\in {\cal V}\times {\cal W} \times {\cal W}$, 
we calculate the CPT based shortfall risk measure:
\bgeqn 
\label{eq:rho-CPT}
\inmat{(GSR-CPT)} \quad
\rho_{(v,w^-,w^+)}(\xi):=\inf\limits_{x\in \R} \left\{ x:   \bbe_{w^-w^+}[v(\xi-x)]\leq 0 \right\}
\edeqn
and then define the PRGSR 
as the worst (largest) GSR-CPT:
\bgeqn
\label{eq:PRGRS-CPT-equiv}
\inmat{(Worst-GSR-CPT)} \quad 
 \Pi_{{\cal V} \times {\cal W}}(\xi) := \sup\limits_{(v,w^-,w^+)\in {\cal V} \times {\cal W} \times {\cal W}} \rho_{(v,w^-,w^+)}(\xi).
\edeqn 
Note that the GSR-CPT model defined in (\ref{eq:rho-CPT})
differs slightly from the one in (\ref{eq:PR-IPC-CPT}) 
in that
we set $\lambda=0$ here. This is not only for the simplification 
in the forthcoming discussions but also aligning the formulation to 
the definition of GSR in Mao and Cai \cite{MaC18}
(see Definition 2.1 there).
In the rest of the paper, (GSR-CPT) always refers to problem (\ref{eq:rho-CPT}).
Note also that under some circumstances, i.e., $\xi$ is restricted to taking non-negative values,
$w^-(p)=1-w^+(1-p)$, $\forall p\in [0,1]$
and $v(-t)=-v(t)$, $\forall t\in \R$,
$\mathbb E_{w^-w^+}[v(\xi-x)]\leq 0$ is equivalent to
$\mathbb E_{w^-w^+}[v(x-\xi)]\geq 0$. Consequently 
the (GSR-CPT) reduces to CPT based insurance premium calculation model \cite{ZhX21}.

The next proposition states that the two definitions 
(see (PRGSR-CPT) in (\ref{eq:PRGRS-CPT}) 
and (Worst-GSR-CPT) in (\ref{eq:PRGRS-CPT-equiv}))
are equivalent.

\begin{proposition}
\label{prop:PGSR-robust-equal}
Let 
$\rho_{{\cal V}\times{\cal W}}(\xi)$ and $\Pi_{{\cal V} \times {\cal W}}(\xi) $
be defined as in (\ref{eq:PRGRS-CPT}) and (\ref{eq:PRGRS-CPT-equiv}) respectively. 
Then 
\bgeqn
\rho_{ {\cal V}\times{\cal W}}(\xi) =
 \Pi_{{\cal V} \times {\cal W}}(\xi).
\label{eq:PRGSR-equivalent}
\edeqn
\end{proposition}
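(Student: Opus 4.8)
The plan is to exploit the monotonicity of the robust constraint in $x$ and thereby reduce the claimed identity to a statement about the intersection of half-lines. Write $\theta=(v,w^-,w^+)$ for a generic tuple in ${\cal V}\times{\cal W}\times{\cal W}$ and set $h_\theta(x):=\bbe_{w^-w^+}[v(\xi-x)]$, so that $\rho_{(v,w^-,w^+)}(\xi)=\inf\{x\in\R:h_\theta(x)\le 0\}$, while $\rho_{{\cal V}\times{\cal W}}(\xi)=\inf\{x\in\R:H(x)\le 0\}$ with $H(x):=\sup_\theta h_\theta(x)$, and $\Pi_{{\cal V}\times{\cal W}}(\xi)=\sup_\theta\rho_\theta(\xi)$. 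I would establish the two inequalities $\rho_{{\cal V}\times{\cal W}}(\xi)\ge\Pi_{{\cal V}\times{\cal W}}(\xi)$ and $\rho_{{\cal V}\times{\cal W}}(\xi)\le\Pi_{{\cal V}\times{\cal W}}(\xi)$ separately.

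The first ingredient is a monotonicity claim: each $h_\theta$ is non-increasing in $x$. Since every $v\in{\cal V}$ is strictly increasing, the map $x\mapsto v(\xi-x)$ is pointwise non-increasing, so for $x_1\le x_2$ the random variable $v(\xi-x_1)$ first-order stochastically dominates $v(\xi-x_2)$. Using the representation (\ref{eq:defi-bbe-wwv}), the fact that $w^+$ and $w^-$ are increasing, and the behaviour of the survival and distribution functions under stochastic dominance, one checks that $\bbe_{w^-w^+}[\cdot]$ preserves first-order stochastic dominance; hence $h_\theta(x_1)\ge h_\theta(x_2)$. Taking the supremum over $\theta$, the robust constraint function $H$ is non-increasing as well.

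Given monotonicity, the ``$\ge$'' direction is immediate: if $x$ is feasible for (PRGSR-CPT), i.e.\ $H(x)\le 0$, then $h_\theta(x)\le H(x)\le 0$ for every $\theta$, so $x$ is feasible for the $\theta$-problem (\ref{eq:rho-CPT}) and therefore $x\ge\rho_\theta(\xi)$; taking the supremum over $\theta$ yields $x\ge\Pi_{{\cal V}\times{\cal W}}(\xi)$, and the infimum over all feasible $x$ gives $\rho_{{\cal V}\times{\cal W}}(\xi)\ge\Pi_{{\cal V}\times{\cal W}}(\xi)$. For the ``$\le$'' direction I would fix any $x>\Pi_{{\cal V}\times{\cal W}}(\xi)$. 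Then $x>\rho_\theta(\xi)$ for every $\theta$, so for each $\theta$ there exists $x'\in[\rho_\theta(\xi),x)$ with $h_\theta(x')\le 0$; monotonicity gives $h_\theta(x)\le h_\theta(x')\le 0$. As this holds for all $\theta$, $H(x)=\sup_\theta h_\theta(x)\le 0$, so $x$ is feasible for (PRGSR-CPT) and $\rho_{{\cal V}\times{\cal W}}(\xi)\le x$. Letting $x\downarrow\Pi_{{\cal V}\times{\cal W}}(\xi)$ yields $\rho_{{\cal V}\times{\cal W}}(\xi)\le\Pi_{{\cal V}\times{\cal W}}(\xi)$, and combining the two inequalities proves (\ref{eq:PRGSR-equivalent}).

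The step I expect to require the most care is the monotonicity lemma for the CPT distorted expectation, since $\bbe_{w^-w^+}[\cdot]$ is a Choquet-type functional built from two separate weighting functions on the gain and loss sides rather than an ordinary expectation; I would verify the preservation of first-order stochastic dominance directly from (\ref{eq:defi-bbe-wwv}). A secondary subtlety is the boundary case $x=\Pi_{{\cal V}\times{\cal W}}(\xi)$: working with the strict inequality $x>\Pi_{{\cal V}\times{\cal W}}(\xi)$ and then passing to the limit sidesteps any question of whether the individual infima in (\ref{eq:rho-CPT}) are attained, and whether the feasible half-lines are open or closed, so that no continuity of $h_\theta$ in $x$ is actually needed.
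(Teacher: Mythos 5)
Your proof is correct and follows essentially the same route as the paper's: both arguments rest on the monotonicity of $x\mapsto\bbe_{w^-w^+}[v(\xi-x)]$ (the paper's Lemma~\ref{lemma:monotone-of-bbewv}(i), which it derives exactly as you propose from the preservation of the pointwise ordering by the CPT functional) and then establish the two inequalities $\rho_{{\cal V}\times{\cal W}}(\xi)\ge\Pi_{{\cal V}\times{\cal W}}(\xi)$ and $\rho_{{\cal V}\times{\cal W}}(\xi)\le\Pi_{{\cal V}\times{\cal W}}(\xi)$. Your device of testing a strict point $x>\Pi_{{\cal V}\times{\cal W}}(\xi)$ and passing to the limit handles the $\pm\infty$ cases and the possible non-attainment of the individual infima uniformly, whereas the paper treats the infinite cases by separate contradiction arguments; this is a cosmetic rather than substantive difference.
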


\begin{proof} We begin by  showing that 
the equality (\ref{eq:PRGSR-equivalent}) 
holds when one of 
the quantities is $+\infty$.
First, consider the case that
$\rho_{ {\cal V}\times{\cal W}}(\xi) =+\infty$.
Then  the feasible set of problem (\ref{eq:PRGRS-CPT}) is empty.
Assume for the sake of a contradiction that 
$\hat{x} :=\Pi_{{\cal V} \times {\cal W}}(\xi)<+\infty$.
Then $\rho_{(v,w^-,w^+)}(\xi)\leq \hat x<+\infty$ for all $(v,w^-,w^+)\in {\cal V}\times {\cal W}\times {\cal W}$.
Thus for any positive number $\epsilon$,
it follows by Lemma \ref{lemma:monotone-of-bbewv}~(i) that 
$$
\bbe_{w^-w^+}[v(\xi-\hat x-\epsilon)] \leq \bbe_{w^-w^+}[v(\xi-\rho_{(v,w^-,w^+)}(\xi)-\epsilon] \leq 0, \; \forall (v,w^-,w^+)\in {\cal V}\times {\cal W}\times {\cal W}.
$$
The latter implies $\hat x+\epsilon$ is a feasible solution to the problem (\ref{eq:PRGRS-CPT}), a contradiction.

Conversely, consider the case that
$\Pi_{{\cal V} \times {\cal W}}(\xi)=+\infty$.
Assume for a contradiction that $x^* := \rho_{ {\cal V}\times{\cal W}}(\xi)<+\infty$.
Then for any positive number $\epsilon$,
$x^*+\epsilon$ is a feasible solution to problem (\ref{eq:PRGRS-CPT}), 
that is,
$$
 \sup_{(v,w^-,w^+)\in {\cal V}\times {\cal W} \times {\cal W}} \bbe_{w^-w^+}[v(\xi-x^*-\epsilon)]\leq 0,
$$
which implies
$$
\bbe_{w^-w^+}[v(\xi-x^*-\epsilon)]\leq 0,\;\forall (v,w^-,w^+)\in {\cal V}\times {\cal W} \times {\cal W}.
$$
The latter 
guarantees that 
$\rho_{(v,w^-,w^+)}(\xi)\leq x^*+\epsilon, \forall (v,w^-,w^+)\in {\cal V}\times {\cal W} \times {\cal W}$ and hence
 $\Pi_{{\cal V} \times {\cal W}}(\xi)\leq x^*+\epsilon<+\infty$,
a contradiction.

 We now turn to discuss the case that
the optimal values of the programs in (\ref{eq:PRGRS-CPT}) and (\ref{eq:PRGRS-CPT-equiv}) are both finite.
Let $x$ be 
any feasible solution of 
problem (\ref{eq:PRGRS-CPT}).
Then
$x$ is also a feasible solution of problem (\ref{eq:rho-CPT})
for any $(v,w^-, w^+)\in {\cal V}\times {\cal W} \times {\cal W}$, which implies 
$x^*\geq \hat{x}$.

Conversely, for any fixed $(v,w^-, w^+)\in {\cal V}\times {\cal W} \times {\cal W}$,
let $x_{(v,w^-, w^+)}$ be a feasible solution to
problem (\ref{eq:rho-CPT}).
Then by definition
$
\hat{x} \geq  x_{(v,w^-, w^+)}.
$
It follows by Lemma \ref{lemma:monotone-of-bbewv}~(i)
that
$$
\mathbb{E}_{w^-w^+}[v(\xi-\hat{x})] 
\leq 
\mathbb{E}_{w^-w^+}
[v(\xi-x_{(v,w^-, w^+)})] 
\leq 0.
$$
This shows $\hat{x}$ is a feasible solution of 
problem (\ref{eq:PRGRS-CPT}).
This shows $\hat{x}\geq x^*$. 
\hfill $\Box$
\end{proof}

Next, we discuss the properties of the newly introduced PRGSR-CPT.
First, it is known that the GSR-CPT is a law invariant monetary risk measure (satisfying 
monotonicity and translation invariance), see \cite{MaC18}. Since these properties are preserved 
under ``sup'' operation, then  $\Pi_{{\cal V} \times {\cal W}}(\cdot)$ is also a law invariant 
monetary risk measure. We can therefore conclude, via Proposition~\ref{prop:PGSR-robust-equal}, 
that PRGSR-CPT is a law invariant monetary risk measure. 
Second, it might be interesting to discuss whether PRGSR-CPT  is a convex risk measure. Recall that
Mao and Cai \cite{MaC18} derive conditions under which GSR-CPT is a convex risk measure. {\color{black} The next proposition states that if we use 
the type of convex value functions and convex/concave weighting functions specified by Mao and Cai \cite{MaC18} to define the ambiguity sets 
${\cal V}$ and ${\cal W}$, then the resulting preference robust SR
is also a convex risk measure.}

\begin{proposition}
\label{Prop:Convex}
Let 
${\cal R}_{con}:=\{\rho_{(v,w^-,w^+)}:\rho_{(v,w^-,w^+)} \inmat{ is a convex risk measure}\}$,
$$
\rho_{{\cal R}_{con}}(\xi)=\sup_{\rho_{(v,w^-,w^+)}\in {\cal R}_{con}}\rho_{(v,w^-,w^+)}(\xi).
$$
Define
\bgeqn
\label{eq:PRGSR-Mao_cai}
\rho_{{\cal V}\times {\cal W}_-\times {\cal W}_+}(\xi):=\inf\limits_{x\in \R} \left\{ x :\sup_{(v,w^-,w^+)\in {\cal V}\times {\cal W}_- \times {\cal W}_+} \bbe_{w^-w^+}[v(\xi-x)]\leq 0\right\}.
\edeqn 
If ${\cal V}_{1}\subset {\mathscr V}$ is the set of all value functions satisfying that $v$ is convex on both  $\R_-$ and $\R_+$, ${\cal W}_{con}\subset {\mathscr W}$ is the set of all convex weighting functions, 
${\cal W}_{cav}\subset {\mathscr W}$ is the set of all concave weighting functions,
and
\bgeqn
{\cal V}_{con}\times {\cal W}_1\times{\cal W}_2:=\left\{(v,w^-,w^+)\in {\cal V}_1\times {\cal W}_{con}\times {\cal W}_{cav}: \frac{v'_+(0)}{v'_-(0)}\geq \sup_{p\in(0,1)} \frac{(w^-)'_-(p)}{(1-w^+(1-\cdot))'_+(p)}\right\},
\label{eq:constraint-vww}
\edeqn
where $v'_-(0)$ and $v'_+(0)$ are the left and right derivatives of value function $v$ at point $0$,
$(w^-)'_-(p)$ and $(1-w^+(1-\cdot))'_+(p)$ are the left and right derivatives of $w^-$ and $1-w^+(1-\cdot)$ at point $p$ respectively,
then 
$$
\rho_{{\cal V}_{con}\times{\cal W}_1\times {\cal W}_2}(\xi)
=\rho_{{\cal R}_{con}}(\xi).
$$
\end{proposition}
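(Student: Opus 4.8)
The plan is to recognize this proposition as a bridge between Mao and Cai's pointwise convexity criterion and the equivalence between the inf–sup form and the worst-case form already established in Proposition~\ref{prop:PGSR-robust-equal}, so that essentially all of the work reduces to matching two index sets.

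The first step is to invoke the convexity characterization of \cite{MaC18} to identify ${\cal R}_{con}$ explicitly. That result says that $\rho_{(v,w^-,w^+)}$ is a convex risk measure exactly when $v$ is convex on each of $\R_-$ and $\R_+$, $w^-$ is convex, $w^+$ is concave, and the derivative inequality appearing in (\ref{eq:constraint-vww}) holds at the kink $0$. Rewritten in the present notation this is the set identity
\bgeqn
{\cal R}_{con}=\left\{\rho_{(v,w^-,w^+)}:(v,w^-,w^+)\in {\cal V}_{con}\times{\cal W}_1\times {\cal W}_2\right\}.
\edeqn
The sufficiency half of the criterion gives the inclusion ``$\supseteq$'' (every tuple in ${\cal V}_{con}\times{\cal W}_1\times {\cal W}_2$ yields a convex $\rho_{(v,w^-,w^+)}$), while the necessity half gives ``$\subseteq$'' (convexity of $\rho_{(v,w^-,w^+)}$ forces the shape conditions on $v,w^-,w^+$ together with the derivative inequality).

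The second step is to rewrite the left-hand side as a worst-case GSR-CPT. The set ${\cal V}_{con}\times{\cal W}_1\times {\cal W}_2$ is not a genuine Cartesian product, since the derivative inequality couples $v$, $w^-$ and $w^+$, and moreover ${\cal W}_1={\cal W}_{con}$ and ${\cal W}_2={\cal W}_{cav}$ are distinct. Neither feature is used in the proof of Proposition~\ref{prop:PGSR-robust-equal}: that argument only uses that $x$ is feasible for the robust constraint if and only if $\bbe_{w^-w^+}[v(\xi-x)]\leq 0$ holds for every admissible tuple, together with monotonicity from Lemma~\ref{lemma:monotone-of-bbewv}(i). Running the same argument with the ambiguity set ${\cal V}_{con}\times{\cal W}_1\times {\cal W}_2$ in place of ${\cal V}\times{\cal W}\times{\cal W}$ therefore yields
\bgeqn
\rho_{{\cal V}_{con}\times{\cal W}_1\times {\cal W}_2}(\xi)=\sup_{(v,w^-,w^+)\in {\cal V}_{con}\times{\cal W}_1\times {\cal W}_2}\rho_{(v,w^-,w^+)}(\xi).
\edeqn
Combining this with the set identity of the first step turns the right-hand side into $\sup_{\rho_{(v,w^-,w^+)}\in {\cal R}_{con}}\rho_{(v,w^-,w^+)}(\xi)=\rho_{{\cal R}_{con}}(\xi)$, which is the claimed equality.

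I expect the only genuine difficulty to sit in the first step, and specifically in the necessity inclusion ``$\subseteq$''. If \cite{MaC18} furnishes convexity only as a sufficient condition, then a priori ${\cal R}_{con}$ could contain risk measures generated by tuples outside ${\cal V}_{con}\times{\cal W}_1\times {\cal W}_2$, and the supremum defining $\rho_{{\cal R}_{con}}$ could strictly exceed the robust value over the explicit set. To close this gap I would either restrict the universe of tuples defining ${\cal R}_{con}$ to the class $v\in{\cal V}_1$, $w^-\in{\cal W}_{con}$, $w^+\in{\cal W}_{cav}$, within which the derivative inequality is the exact convexity criterion, or else establish necessity of the shape conditions directly: using the definition of $\bbe_{w^-w^+}$ in (\ref{eq:defi-bbe-wwv}), one tests convexity of $\rho_{(v,w^-,w^+)}$ on simple two-point laws straddling $0$ and reads off that non-convex $v$, non-convex $w^-$, or non-concave $w^+$ each break it. Once the two index sets are matched, the remaining steps are routine bookkeeping.
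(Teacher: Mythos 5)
Your proposal is correct and follows essentially the same route as the paper: apply Proposition~\ref{prop:PGSR-robust-equal} to rewrite $\rho_{{\cal V}_{con}\times{\cal W}_1\times{\cal W}_2}(\xi)$ as the supremum of $\rho_{(v,w^-,w^+)}(\xi)$ over the explicit tuple set, then invoke the if-and-only-if convexity characterization of Mao and Cai (their Corollary 2.8) to identify that tuple set with ${\cal R}_{con}$ and conclude by a two-sided inequality. Your closing worry about necessity is moot since the cited result is stated as an equivalence, and your remark that the coupled (non-product) structure of the ambiguity set does not affect the argument of Proposition~\ref{prop:PGSR-robust-equal} is a point the paper passes over silently but is handled correctly here.
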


\noindent\textbf{Proof.} By Proposition \ref{prop:PGSR-robust-equal},
$$
\rho_{{\cal V}_{con}\times {\cal W}_1\times{\cal W}_2}(\xi) =
\sup_{(v,w^-,w^+)\in {\cal V}_{con}\times {\cal W}_1 \times {\cal W}_2}  \rho_{(v,w^-,w^+)}(\xi).
$$
By  \cite[Corollary 2.8]{MaC18}, 
a GSR-CPT $\rho_{(v,w^-,w^+)}(\xi)$ is a convex risk measure 
if and only if $w^-(p)$ and $1-w^+(1-p)$ are convex in $p\in [0,1]$, and $v$ is convex on both $\R_-$ and $\R_+$,
and $(v,w^-,w^+)$ satisfying the inequality in (\ref{eq:constraint-vww}), 
which means that for any $ \rho_{(v,w^-,w^+)}(\xi)\in {\cal R}_{con}$, we must have 
$v\in {\cal V}_{con}, w^-\in {\cal W }_1, w^+\in {\cal W}_2$. Thus  
$\rho_{(v,w^-,w^+)}(\xi)\leq \rho_{{\cal V}_{con} \times {\cal W}_1\times {\cal W}_2}(\xi)$ and hence
$$
\rho_{{\cal R}_{con}}(\xi) =\sup_{\rho_{(v,w^-,w^+)} \in {\cal R}_{con}}\rho_{(v,w^-,w^+)}(\xi)\leq \rho_{{\cal V}_{con} \times {\cal W}_1\times {\cal W}_2}(\xi).
$$
Conversely for any $v\in {\cal V}_{con}, w^-\in {\cal W}_1, w^+\in {\cal W}_2$,
$\rho_{(v,w^-,w^+)} \in {\cal R}_{con}$, and hence
$$
 \rho_{{\cal V}_{con}\times {\cal W}_1\times {\cal W}_2}(\xi) 
 =\sup_{(v,w^-,w^+)\in {\cal V}_{con}\times {\cal W}_1 \times {\cal W}_2}  \rho_{(v,w^-,w^+)}(\xi)
 \leq 
 \rho_{{\cal R}_{con}}(\xi).
 $$
The proof is complete.
\hfill $\Box$

We note that the PRGSR-CPT defined in (\ref{eq:PRGSR-Mao_cai}) is slightly 
different from PRGSR-CPT in (\ref{eq:PRGRS-CPT})
by allowing $w^{-}$ and $w^+$ to have different ambiguity set. This is because in the setup of the GSR-CPT models,
$w^-(p)$ and $1-w^+(1-p)$ are convex in $p\in [0,1]$ 
and consequently $w^+(p)$ is a concave function.

Mao and Cai \cite{MaC18} also derive conditions under which the GSR-CPT is a coherent risk measure. We can easily use their result for establishing coherence of 
PRGSR-CPT (\ref{eq:PRGSR-Mao_cai}).

\begin{proposition}
\label{prop:robust-RM-Pi}
Let $\rho_{{\cal V} \times {\cal W}_1 \times {\cal W}_2}(\xi)$
 be defined as in (\ref{eq:PRGSR-Mao_cai}).
Define
\bgeq
{\cal R}_{coh} &:=&\{\rho_{(v,w^-,w^+)}:\rho_{(v,w^-,w^+)} \inmat{ is a coherent risk measure}\},\\
\rho_{{\cal R}_{coh}}(\xi)&:=&\sup_{\rho_{(v,w^-,w^+)}\in {\cal R}_{coh}}\rho_{(v,w^-,w^+)}(\xi),\\
{\cal V}_{coh}&:=&\left\{v\in {\mathscr V} |\, v(x)=a_1x \inmat{ for } x\geq 0, v(x)=a_2x \inmat{ for } x<0, a_1,a_2>0 \right \}.
\edeq
If ${\cal W}_1\subset \mathscr W$ is the set of all convex weighting functions, ${\cal W}_2\subset \mathscr W$ is the set of all concave weighting functions, 
then
$$
\rho_{{\cal V}_{coh} \times {\cal W}_1\times {\cal W}_2}(\xi)
=\rho_{{\cal R}_{coh}}(\xi).
$$
\end{proposition}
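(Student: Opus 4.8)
The plan is to mirror the proof of Proposition~\ref{Prop:Convex} step for step, replacing the convexity characterization of Mao and Cai by their corresponding coherence characterization. First I would invoke Proposition~\ref{prop:PGSR-robust-equal} to rewrite the robust quantity defined in (\ref{eq:PRGSR-Mao_cai}) as a supremum of individual risk measures,
\[
\rho_{{\cal V}_{coh}\times{\cal W}_1\times{\cal W}_2}(\xi)=\sup_{(v,w^-,w^+)\in {\cal V}_{coh}\times {\cal W}_1\times {\cal W}_2}\rho_{(v,w^-,w^+)}(\xi),
\]
so that the claim reduces to showing that the index set ${\cal V}_{coh}\times{\cal W}_1\times{\cal W}_2$ coincides, as far as the induced family of risk measures is concerned, with ${\cal R}_{coh}$.

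The second step is to pin down the exact characterization of coherence for a single GSR-CPT $\rho_{(v,w^-,w^+)}$ from \cite{MaC18}. Since a coherent risk measure is a convex risk measure that is in addition positively homogeneous, I would split the conditions accordingly: positive homogeneity of $\rho_{(v,w^-,w^+)}$ should force the value function to be positively homogeneous, i.e.\ piecewise linear through the origin with positive slopes on each half line, which is precisely membership in ${\cal V}_{coh}$; subadditivity together with the convexity analysis should then require $w^-$ to be convex and $w^+$ to be concave, i.e.\ $w^-\in {\cal W}_1$ and $w^+\in {\cal W}_2$. This would yield the set identity
\[
{\cal R}_{coh}=\bigl\{\rho_{(v,w^-,w^+)}:(v,w^-,w^+)\in {\cal V}_{coh}\times {\cal W}_1\times {\cal W}_2\bigr\}.
\]

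Given this identity, the conclusion follows by the same two-sided inclusion argument used in Proposition~\ref{Prop:Convex}: every $\rho_{(v,w^-,w^+)}\in {\cal R}_{coh}$ arises from some tuple in ${\cal V}_{coh}\times{\cal W}_1\times{\cal W}_2$, giving $\rho_{{\cal R}_{coh}}(\xi)\le \rho_{{\cal V}_{coh}\times{\cal W}_1\times{\cal W}_2}(\xi)$, and conversely every tuple in the product set induces a coherent measure, giving the reverse inequality; equating the two yields the desired equality of suprema.

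The main obstacle I anticipate lies entirely in the second step, namely in matching the product set ${\cal V}_{coh}\times{\cal W}_1\times{\cal W}_2$ \emph{exactly} to the coherence-inducing tuples. The delicate point is the derivative-ratio condition that appeared in the convex case (the inequality in (\ref{eq:constraint-vww})): for a piecewise linear $v\in {\cal V}_{coh}$ one has $v'_+(0)=a_1$ and $v'_-(0)=a_2$, so I must check whether that ratio constraint persists for coherence and, if it does, confirm that it is subsumed by the structure of ${\cal V}_{coh}$ together with the weighting conditions, or otherwise that it does not reduce the attained supremum. Resolving this cleanly—most safely by citing the precise coherence corollary of \cite{MaC18} rather than re-deriving it, and by verifying that the positive-homogeneity requirement leaves the convexity of $w^-$ and concavity of $w^+$ as the only active subadditivity constraints—is where the real content of the proof sits; everything else is a transcription of the convex case.
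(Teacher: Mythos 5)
Your proposal is correct and follows essentially the same route as the paper: the paper likewise rewrites $\rho_{{\cal V}_{coh}\times{\cal W}_1\times{\cal W}_2}(\xi)$ as a supremum via Proposition~\ref{prop:PGSR-robust-equal}, cites Mao and Cai's coherence characterization (their Theorem~2.10: $\rho_{(v,w^-,w^+)}$ is coherent if and only if $w^-(p)$ and $1-w^+(1-p)$ are convex and $v\in{\cal V}_{coh}$) to identify ${\cal R}_{coh}$ with the tuples in ${\cal V}_{coh}\times{\cal W}_1\times{\cal W}_2$, and concludes by the same two-sided inclusion. The derivative-ratio worry you flag does not arise because that cited theorem states the coherence characterization without such a condition; the paper only needs the preliminary observation that $a_1w^-(p)-a_2(1-w^+(1-p))$ is increasing for all $a_1,a_2>0$ to place the piecewise linear $v$ within the scope of that theorem.
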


\noindent\textbf{Proof.}
Observe first that since $w^-,w^+$ are increasing,
 then $a_1w^-(p)-a_2(1-w^+(1-p))$ is also increasing
 over $[0, 1]$ for all $a_1,a_2>0$.
By  \cite[Theorem 2.10]{MaC18}, 
 $\rho_{(v,w^-,w^+)}$ is coherent if and only if $w^-(p)$ and $1-w^+(1-p)$ are convex and $v\in {\cal V}_{coh}$.
Moreover, it follows by Proposition \ref{prop:PGSR-robust-equal} that
$\rho_{{\cal V}_{coh} \times {\cal W}_1 \times {\cal W}_2}(\xi)$
can be rewritten as 
$\sup_{(v,w^-,w^+)\in {\cal V}_{coh}\times {\cal W}_1 \times {\cal W}_2}  \rho_{(v,w^-,w^+)}(\xi)$.
 For any 
 $\rho_{(v,w^-,w^+)}\in {\cal R}_{coh}$, 
 we must have $(v,w^-,w^+)\in {\cal V}_{coh}\times {\cal W}_1\times {\cal W}_2$ and consequently 
 $$
 \rho_{(v,w^-,w^+)}(\xi) \leq \rho_{{\cal V}_{coh} \times {\cal W}_1 \times {\cal W}_2}(\xi)
 \quad
 \text{and}\quad  \rho_{{\cal R}_{coh}}(\xi)\leq \rho_{{\cal V}_{coh} \times {\cal W}_1 \times {\cal W}_2}(\xi).
 $$
 Conversely, for any  $(v,w^-,w^+)\in {\cal V}_{coh}\times {\cal W}_1\times {\cal W}_2$,  
 $ \rho_{(v,w^-,w^+)}  \in {\cal R}_{coh}$. 
 Thus 
 $$
 \rho_{(v,w^-,w^+)} \leq \rho_{{\cal R}_{coh}}
 \quad \text{and}\quad 
 \rho_{{\cal V}_{coh} \times {\cal W}_1 \times  {\cal W}_2}(\xi)\leq \rho_{{\cal R}_{coh}}(\xi).
 $$
The proof is complete. \hfill $\Box$

In the literature of 
prospect theory \cite{KaT79,TvK92},
the value functions are often S-shaped, which means 
that they are not convex if they are strictly S-shaped.
In this case, the GSR-CPT defined in (\ref{eq:rho-CPT}) is not convex and consequently the PRGSR-CPT defined in 
(\ref{eq:PRGSR-Mao_cai}) is not a convex risk measure.
However, if the value function $v$ is not strictly $S$-shaped, then the value function may still satisfy 
the conditions specified in the definition of ${\cal V}_{con}$ (see  Proposition \ref{Prop:Convex}). The next example is provided by Mao in a private communication.

\begin{example}
Let $w^-(p)=1-w^+(1-p), \; \forall p\in [0,1]$ 
be differentiable
and convex, let
the value function $v$ 
be convex over $\R_-$ and 
linear over $\R_+$, i.e.,
\bgeqn
v(x)=\left\{ \begin{array}{ll}
v^-(x) & \inmat{for } x<0,\\
0& \inmat{for } x=0,\\
kx & \inmat{for } x> 0,\\
\end{array}
\right.
\edeqn
where $v^-$ 
is convex over $\R_-$
and the slope $k$ satisfies that $\frac{k}{v'_-(0)}\geq  \sup_{p\in(0,1)} \frac{(w^-)'_-(p)}{(1-w^+(1-\cdot))'_+(p)}$.
Since $ \sup_{p\in(0,1)} \frac{(w^-)'_-(p)}{(1-w^+(1-\cdot))'_+(p)}=1$,
then $v$ is actually convex over the whole real line $\R$.
This shows a special S-shaped (actually convex) value function 
satisfying the conditions of ${\cal V}_{con}$.
\end{example}

The PRGSR-CPT model 
covers two important preference robust risk measure as special cases.
\begin{itemize}
\item[(1)]{\bf Preference robust utility-based shortfall risk measures.}
If the ambiguity set of weighting functions only contains single linear function, i.e., $w(p)=p$, $\forall p\in [0,1]$, 
then 
$
\Pi_{{\cal V} }(\xi)
:= \sup\limits_{v\in {\cal V}} \rho_{(v,w^-,w^+)}(\xi).
$
It follows from Proposition \ref{prop:PGSR-robust-equal} that the above risk measure is equivalent 
to the 
preference robust utility-based shortfall risk measures
\bgeq
\rho_{{\cal V}}(\xi):=\inf \limits_{x\in \R} \left\{ x: \sup_{v\in {\cal V}} \bbe[v(\xi-x)] \leq 0\right\},
\edeq
which has been fully investigated 
by
Delage et al.~\cite{DGX18}.

\item[(2)]{\bf  Preference robust distortion risk measure.}
If the ambiguity set of value functions 
is a singleton, i.e., $v(t)=t$, $\forall t\in \R$, 
then 
$$
\Pi_{{\cal W} }(\xi)
:= \sup\limits_{(w^-,w^+)\in {\cal W}\times {\cal W}} \rho_{(v,w^-,w^+)}(\xi).
$$
By Proposition \ref{prop:PGSR-robust-equal},
the above problem is equivalent to
\bgeqn
\label{eq:distortion_problem}
\rho_{{\cal W}}(\xi)
:=\inf\limits_{x\in \R} \left\{ x:  \sup_{(w^-,w^+)\in {\cal W}\times {\cal W}} 
\bbe_{w^-w^+}[\xi-x] \leq 0 \right\}.
\edeqn
Since $\bbe_{w^-w^+}[\xi]$ is essentially about distortion/modification of the CDF of $\xi$,
$\rho_{\cal W}(\xi)$ may be
viewed as a shortfall risk measure based on Yaari's dual theory of choice.
Moreover it is well-known that $\bbe_{w^-w^+}[\xi]$ is a coherent risk measure when $w^-(p)=1-w^+(1-p)$ and $w^-(p)$ is convex,
which means $\bbe_{w^-w^+}[\xi-x]=\bbe_{w^-}[\xi]-x$.
In this case, 
$\Pi_{\cal W}(\xi)$ 
coincides with 
the preference robust distortion risk measure $\sup_{w^-\in {\cal W}}\int_{\R} tdw^-(F_{\xi}(t))$ \cite{WaX21}.
\end{itemize}

Before concluding this section, we note that the reason why we define the PRGSR via (\ref{eq:PRGRS-CPT}) rather than 
(\ref{eq:PRGRS-CPT-equiv}) is that the former can be 
solved by tractable reformulations whereas the latter cannot.
However, formulation  (\ref{eq:PRGRS-CPT-equiv}) 
connects more directly to  
 decision maker's preferences. To see this, consider the
case that the a decision maker prefers prospect $A$ over prospect $B$. 
Assuming that the DM's preference can be represented by 
the CPT based GSR $\rho_{(v,w^-,w^+)}(\xi)$, then we have
\bgeqn
\rho_{(v,w^-,w^+)}(A)< \rho_{(v,w^-,w^+)}(B).
\label{eq:PRGSR-AB}
\edeqn
The inequality enables us to identify a class of value/weighting functions $v, w^-,w^+$, i.e., the ambiguity sets ${\cal V}$ and ${\cal W}$ and subsequently define PRGSR
$\Pi_{{\cal V} \times {\cal W}}(\xi)$.
To illustrate the idea, consider a simple case that
the DM's preference may be represented by three pairs 
of value/weighting functions $\{(v_i,w^-_i,w_i^+): i=1,2,3\}$.
It is found that $(v_1,w^-_1,w_1^+)$ and $(v_2,w^-_2,w_2^+)$ satisfy 
(\ref{eq:PRGSR-AB}) but $(v_3,w^-_3,w_3^+)$ does not.
That is, ${\cal V}\times {\cal W}=\{ (v_i,w^-_i,w_i^+): i=1,2\}$.
Let $\xi$ be a new prospect. Then
\bgeqn
\label{eq:PRGRS-CPT-equiv-ex}
 \Pi_{{\cal V} \times {\cal W}}(\xi)
 := \max_{i=1,2} \rho_{(v_i,w_i^-,w_i^+)}(\xi)
\edeqn 
and 
\bgeqn
\label{eq:PRGRS-CPT-ex}
\rho_{{\cal V}\times{\cal W}}(\xi)
:= \inf\limits_{x\in \R}  \left\{x: 
\sup_{i=1,2}
\mathbb E_{w_i^-w_i^+}[v_i(\xi-x)]\leq 0 \right\}.
\edeqn 
In this case, we can easily 
calculate the PRGSR via (\ref{eq:PRGRS-CPT-equiv-ex}).
In practice, however, the ambiguity sets ${\cal V} \times {\cal W}\times {\cal W}$ is not a finite set and consequently 
it is very difficult to calculate $\Pi_{{\cal V} \times {\cal W}}(\xi)$. In contrast, 
(\ref{eq:PRGRS-CPT}) can be reformulated as a linear program
when the ambiguity sets have some specific structure. We will come back to this in Section 4. From this simple example, we can see how we may use the equivalence between 
$ \Pi_{{\cal V} \times {\cal W}}(\xi)$ and 
$\rho_{{\cal V}\times{\cal W}}(\xi)$ to 
define and calculate the PRGSR: using the DM's information 
represented by GSR-CPT to construct the ambiguity sets 
${\cal V} \times {\cal W}\times {\cal W}$ and then use 
(\ref{eq:PRGRS-CPT}) to calculate the PRGSR. The 
flow chart in Figure \ref{fig:flow-chat} 
illustrates 
the procedures
for which we will 
give rise to details in the next two sections.

\begin{figure}
\centering
\begin{tikzpicture}[FlowChart,
    node distance = 5mm and 7mm,
      start chain = A going right]
\node (box1) [process]
{DM's preferences\\ 
observed and  repres-\\
ented by $\rho_{(v_i,w_i^-,w_i^+)}$};      
\node (box2) [process]
{ Construct the \\
ambiguity set \\
${\cal V} \times {\cal W}\times {\cal W}$};  

\node (box3) [process]
{Calculate PRGSR \\
by solving problem (PRGSR-CPT)};        

\draw [arrow] (box1) -- node[anchor=east] {} (box2);
\draw [arrow] (box2) -- node[anchor=south] {} (box3);
\end{tikzpicture}
 \hfil
  \caption{From DM's preferences to calculation of PRGSR}
  \label{fig:flow-chat}
\end{figure}
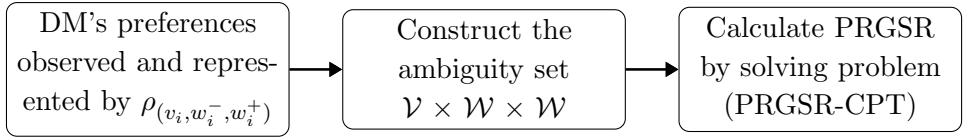

\section{Construction of the ambiguity sets} 
\label{sec3:ambiguity_VW}
A key ingredient of the 
PRGSR-CPT model (\ref{eq:PRGRS-CPT})
is  construction of 
the ambiguity sets of the value functions and weighting functions because it concerns collection of 
a DM's preference information and calculation of 
the robust risk measure. In general, it is 
difficult to construct the ambiguity sets of the value functions and weighting functions simultaneously.
In this section, we first discuss the construction of the ambiguity set ${\cal V}$ 
of the value functions for a fixed weighting function
and then the ambiguity set ${\cal W}$ of weighting functions.
Throughout 
this section,
we make the following assumption.

\begin{assumption} 
\label{Assu:preference}
The decision maker's 
preference can be described by 
GSR-CPT $\rho_{(v,w^-,w^+)}(\xi)$
but there is 
an ambiguity about how to identify the true
value function and weighting function 
such that 
$\rho_{(v,w^-,w^+)}(\xi)$
precisely 
captures the DM's risk preference. 
\end{assumption}

As in the existing works of PRO models (see e.g. \cite{ArD15,HuM15,GuX21}), 
there are two types of information for 
identifying the true unknown value function.
 One is generic information about the shape of the value functions such as convexity, concavity, $S$-shapedness which 
 is shared by a majority of decision makers in most decision making problems. The other is decision maker/problem specific 
information which has to be elicited through various approaches such as pairwise comparisons, certainty equivalents and past decisions.   
 
In contrast, the
ambiguity set of the weighting functions
may be constructed in a different manner 
given the fact that one can usually use 
empirical data or subjective judgement to construct 
a nominal weighting function. 
Due to the limitation of data, the nominal weighting function
might not be accurate and hence one can usually construct a ball of weighting functions centered 
at the nominal as an ambiguity set. The larger the ball, the more likely the true distortion function is included,
 see Wang and Xu \cite{WaX20} for the
 distortion risk measure optimization.

We propose to 
construct the ambiguity sets of value functions and weighting functions
in three steps.
First,
we extend the certainty equivalent approach in terms of utility-based shortfall risk measure to generalized shortfall risk measure based on CPT to translate the preferences relations of GSR-CPT
into inequalities of the value function.
Second,
we use the classical preference elicitation approach, pairwise comparisons, to narrow down the ambiguity set by increasing the number of questionnaires
and follow the experimental result in practise to confine the study to the case that the value function is strictly S-shaped.
Third,
we will consider a nominal weighting function and take into account the $\dd_{\tilde g}$-ball (Definition \ref{defi-Ball-N}) centered at the nominal weighting function with radius.

 \subsection{
 Characterization of the value function
via a DM's certainty equivalent information}
Under Assumption \ref{Assu:preference}, we may 
use $\rho_{(v,w^-,w^+)}$ to describe a 
DM's preferences.
In order to construct an ambiguity set of 
the value functions, denoted by ${\cal V}$,
for a fixed pair of weighting functions $(w^-,w^+)$,
we begin by exploiting the sufficient and necessary conditions of value functions for the elicited information of GSR-CPT, as analyzed in \cite{DGX18},
here we use certainty equivalent approach.
Let 
\bgeqn
{\cal R}:=\{\rho_{(v,w^-,w^+)}: \rho_{(v,w^-,w^+)}(\xi)
\text{ is defined as in (\ref{eq:rho-CPT})}
\}.
\label{defi:mathcal-R-CPT-based-RM}
\edeqn
\begin{definition}[Certainty equivalent]
Let $\{A_k\}_{k=1}^K
\subset {\cal L}^0(\R)
$ be  a list of considered random variables
and $\{a_k^{+}\}_{k=1}^K, \{a_k^{-}\}_{k=1}^K
\subset \R$ be  two lists of non-random variables.
The set ${\cal R}^K_{ce}\subset {\cal R}$ denotes the set of GSR-CPT $\rho_{(v,w^-,w^+)}\in {\cal R}$ satisfying an associated set of confidence intervals $[a_k^{-},
a_k^{+}]\subset [\essinf{A_k},\esssup{A_k}]$ for the ``certainty equivalent" of each $A_k$ for $k=1,\cdots,K$,
i.e.,
\bgeqn
{\cal R}^K_{ce}:=
\left\{\rho_{(v,w^-,w^+)}:{\cal L}^0(\R)\rightarrow \R
\,\big |\, a_k^{-}\leq \rho_{(v,w^-,w^+)}(A_k)\leq a_k^{+},
\; \forall k=1,\cdots,K\right\}.
\label{eq:constraint-calR-pair}
\edeqn
\end{definition}

Note that (\ref{eq:constraint-calR-pair}) describes the case that the risk of each $A_k$ to be lower than $a_k^{+}$ and bigger than $a_k^{-}$.
This motivates us to identify such two bounds for the riskiness of random variable $A_k$ by taking the forms of questions such as \cite{DGX18,WaX21}:
\begin{itemize}
    \item Upper bound $a_k^{+}$: 
    ``What is the smallest amount of money that you would decline to pay instead of being exposed to the risk of $A_k$ ?"
    \item Lower bound $a_k^{-}$: 
    ``What is the largest amount of money that you would be willing to pay instead of being exposed to the risk of $A_k$ ?"
\end{itemize}

Note that 
if the answers to both questions 
give rise to an identical number, i.e., 
$a_k^{-}=a_k^{+}=\bar{a}_k$,
then the certainty equivalent of $A_k$ is identified
with $\rho_{(v,w^-,w^+)}(A_k)=\bar{a}_k$.
In practice, it is more likely to have 
an interval $[a_k^{-},a_k^{+}]$ 
for the equivalent value of random variable $A_k$.
Moreover, we assume that 
$[a_k^{-},a_k^{+}]\subset (\essinf{A_k},\esssup{A_k})$ which 
is not restrictive. This is because
we can substitute $a_k^{-} >\max\{a_k^{-},\essinf{A_k}\}$ and
$a_k^{+}<\min\{a_k^{+}, \esssup{A_k}\}$ 
otherwise.
The above certainty equivalent approach 
is an extension of Delag et al.~\cite{DGX18} under VNM's expected utility theory or 
Wang and Xu \cite{WaX21} under the distortion risk measure 
to CPT based shortfall risk measures. 

${\cal R}^K_{ce}$ defines a set of GSRs each of which can be used to describe the DM's risk preference expressed via certainty equivalent questionnaires. Our interest here is to use this information to identify a class of  value functions ${\cal V}^K_{ce}\subset \mathscr {V} $ such that
${\cal V}^K_{ce}$ can be used for computation of
PRGSR $\rho_{{\cal V}\times{\cal W}}(\xi)$. The next proposition states this.

\begin{proposition}[Characterization of 
${\cal V}$ for $\rho_{{\cal R}^K_{ce}}(\xi)$] 
\label{prop:certainty-equi-rho}
Let
\bgeqn
\rho_{{\cal R}^K_{ce}}(\xi) :=\sup_{\rho_{(v,w^-,w^+)}\in {\cal R}^K_{ce}} \rho_{(v,w^-,w^+)}(\xi). 
\label{def:rho-mathcalR-ce}
\edeqn
Define
\bgeqn
\label{eq:definition-calV-pair}
{\cal V}^K_{ce} :=\left\{v\in \mathscr {V}:\begin{array}{ll}
&\bbe_{w^-w^+}[v(A_k-a_k^{+})]\leq 0,\\
&\bbe_{w^-w^+}[v(A_k-a_k^{-})]\geq 0,
\end{array}
\inmat{ for }  k=1,\cdots,K\right\}.
\edeqn
Then  
\bgeqn
\rho_{(v,w^-,w^+)}\in {\cal R}^K_{ce} 
\quad \Longleftrightarrow\quad
v\in {\cal V}^K_{ce}
\label{eq:equivalent-rho-Vpair-Vpair}
\edeqn
and
\bgeqn
\label{eq:rho-R_pair=Pi_v_pair}
\rho_{ {\cal R}^K_{ce}}(\xi)  
=\rho_{{\cal V}^K_{ce}}(\xi).
\edeqn

\end{proposition}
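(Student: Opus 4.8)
The plan is to prove the two assertions in turn. Assertion (\ref{eq:equivalent-rho-Vpair-Vpair}) is the pointwise heart of the result, relating a certainty-equivalent constraint on the risk measure $\rho_{(v,w^-,w^+)}$ to two inequalities on the distorted expectation of the value function $v$; assertion (\ref{eq:rho-R_pair=Pi_v_pair}) will then follow almost immediately by combining (\ref{eq:equivalent-rho-Vpair-Vpair}) with Proposition~\ref{prop:PGSR-robust-equal}. Throughout, the pair $(w^-,w^+)$ is held fixed, so ${\cal R}^K_{ce}$ is indexed only by the value function $v$, and the weighting functions enter merely as a fixed distortion in $\bbe_{w^-w^+}[\cdot]$.

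For (\ref{eq:equivalent-rho-Vpair-Vpair}), fix $k$ and set $g_k(x):=\bbe_{w^-w^+}[v(A_k-x)]$, so that $\rho_k:=\rho_{(v,w^-,w^+)}(A_k)=\inf\{x\in\R: g_k(x)\le 0\}$. By Lemma~\ref{lemma:monotone-of-bbewv}(i), $g_k$ is non-increasing; together with continuity of $g_k$ in $x$ this pins down the sign structure at the threshold, namely $g_k(x)>0$ for $x<\rho_k$, $\;g_k(\rho_k)=0$, and $g_k(x)\le 0$ for $x\ge\rho_k$ (the equality $g_k(\rho_k)=0$ comes from taking left and right limits of $g_k$ at $\rho_k$ and using the inequalities on either side). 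The upper-bound equivalence $\rho_k\le a_k^{+}\Leftrightarrow\bbe_{w^-w^+}[v(A_k-a_k^{+})]\le 0$ is then routine: if $\rho_k\le a_k^{+}$ then monotonicity gives $g_k(a_k^{+})\le g_k(\rho_k)=0$, while conversely $g_k(a_k^{+})\le 0$ places $a_k^{+}$ in the sublevel set defining $\rho_k$, so $\rho_k\le a_k^{+}$.

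The genuinely delicate point is the lower-bound equivalence $a_k^{-}\le\rho_k\Leftrightarrow\bbe_{w^-w^+}[v(A_k-a_k^{-})]\ge 0$. The forward direction is easy: $a_k^{-}\le\rho_k$ and monotonicity of $g_k$ give $g_k(a_k^{-})\ge g_k(\rho_k)=0$. The reverse direction is where the main obstacle lies: from $g_k(a_k^{-})\ge 0$ I must exclude the possibility $a_k^{-}>\rho_k$, which a priori would only yield $g_k(a_k^{-})\le g_k(\rho_k)=0$, hence $g_k(a_k^{-})=0$. To rule this out I will invoke \emph{strict} monotonicity of $g_k$ on the open interval $(\essinf A_k,\esssup A_k)$: since $v$ and the weighting functions are strictly increasing and $A_k$ is non-degenerate, shifting the argument strictly decreases the distorted expectation, so $g_k(a_k^{-})=g_k(\rho_k)$ forces $a_k^{-}=\rho_k$. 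This is precisely the role of the standing assumption $[a_k^{-},a_k^{+}]\subset(\essinf A_k,\esssup A_k)$, which guarantees that $a_k^{-}$ and $\rho_k$ lie in the region of strict decrease. Collecting the two equivalences over all $k$ yields (\ref{eq:equivalent-rho-Vpair-Vpair}).

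Finally, for (\ref{eq:rho-R_pair=Pi_v_pair}), the equivalence (\ref{eq:equivalent-rho-Vpair-Vpair}) shows that as $\rho_{(v,w^-,w^+)}$ ranges over ${\cal R}^K_{ce}$ the value function $v$ ranges exactly over ${\cal V}^K_{ce}$, whence
$$\rho_{{\cal R}^K_{ce}}(\xi)=\sup_{\rho_{(v,w^-,w^+)}\in{\cal R}^K_{ce}}\rho_{(v,w^-,w^+)}(\xi)=\sup_{v\in{\cal V}^K_{ce}}\rho_{(v,w^-,w^+)}(\xi).$$
The right-hand side is the worst-case form $\Pi_{{\cal V}^K_{ce}}(\xi)$ of the robust risk measure with singleton weighting ambiguity $\{(w^-,w^+)\}$, so Proposition~\ref{prop:PGSR-robust-equal} identifies it with the robust-constraint form $\rho_{{\cal V}^K_{ce}}(\xi)$, giving (\ref{eq:rho-R_pair=Pi_v_pair}). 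I expect the strict-monotonicity step in the lower-bound equivalence to be the only non-routine ingredient; everything else is bookkeeping with monotonicity, continuity, and the definition of the infimum.
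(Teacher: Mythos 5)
Your proposal is correct and takes essentially the same route as the paper's own proof: both reduce (\ref{eq:rho-R_pair=Pi_v_pair}) to the pointwise equivalence (\ref{eq:equivalent-rho-Vpair-Vpair}) via Proposition~\ref{prop:PGSR-robust-equal}, establish the upper-bound direction by translation of the infimum together with Lemma~\ref{lemma:monotone-of-bbewv}(i), and handle the delicate lower-bound direction with the strict monotonicity of Lemma~\ref{lemma:monotone-of-bbewv}(ii), relying on $[a_k^{-},a_k^{+}]\subset(\essinf A_k,\esssup A_k)$ exactly as the paper does. Your implicit use of continuity of $g_k$ to get $g_k(\rho_k)=0$ matches the paper's appeal to the ``indifference equation'' (\ref{eq:solution-rho-hatv-etak-}), so the two arguments share the same unstated regularity assumption.
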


\begin{proof}  The equality 
in (\ref{eq:rho-R_pair=Pi_v_pair}) follows from 
the equivalence in
(\ref{eq:equivalent-rho-Vpair-Vpair}) 
and Proposition \ref{prop:PGSR-robust-equal}. So it 
is enough to prove (\ref{eq:equivalent-rho-Vpair-Vpair}). 
Let 
\bgeqn
{\cal V}_{{\cal R}^K_{ce}}
:=\{v\in \mathscr {V}: \rho_{(v,w^-,w^+)}\in  {\cal R}^K_{ce}\}.  
\label{eq:defi-calV-R-pair}
\edeqn
We want to show
$
{\cal V}^K_{ce}={\cal V}_{ {\cal R}^K_{ce}}. 
$
\underline{First, we 
show that ${\cal V}^K_{ce}\subseteq {\cal V}_{{\cal R}^K_{ce}}$}. 
Let $\hat v\in {\cal V}^K_{ce}$.
Then $\hat{v}$ satisfies the first inequality in (\ref{eq:definition-calV-pair}), i.e.,
\bgeqn
\bbe_{w^-w^+}[\hat{v}(A_k-a_k^{+})]\leq 0.
\label{eq:bbe-hatv-etak-gammak-geq}
\edeqn
We show that this 
inequality implies the second inequality in (\ref{eq:constraint-calR-pair}),
i.e.,
\bgeqn
\rho_{(\hat v, w^-,w^+)}(A_k)\leq a_k^{+}.
\label{eq:calR-pair-first-inequality}
\edeqn
This is evidenced by the fact that
\bgeqn
\rho_{(\hat v, w^-,w^+)}(A_k)
&=&\inf\{x \in X: \bbe_{w^-,w^+}[\hat{v}(A_k-x)]\leq 0\} \nonumber \\
&=& a_k^{+}+\inf\{x' \in \R
: \bbe_{w^-w^+}[\hat{v}(A_k-a_k^{+}-x')]\leq 0\}\nonumber \\
& \leq
& a_k^{+},
\label{eq:first-constraint-calR-pair}
\edeqn
where the last inequality is due to (\ref{eq:bbe-hatv-etak-gammak-geq}).
Next, since
$\hat v\in {\cal V}^K_{ce}$
also implies
\bgeqn
\bbe_{w^-w^+}[\hat{v}(A_k-a_k^{-})]\geq 0, \;\forall k=1,\cdots,K,
\label{eq:bbe-hatv-etak-gammak-leq}
\edeqn
we will show that inequality (\ref{eq:bbe-hatv-etak-gammak-leq}) implies 
the first inequality in 
(\ref{eq:constraint-calR-pair}), i.e.,
\bgeqn
\rho_{(\hat v, w^-,w^+)}(A_k)\geq a_k^{-}.
\label{eq:calR-pair-second-inequality}
\edeqn
Let 
 $\Omega_k:=\{\omega\in \Omega: A_k(\omega)\leq a_k^{-}\}.
$ 
Since
$a_k^{-}> \essinf A_k$,
then 
$\mathbb{P}(\Omega_k)>0$.
Let $\varepsilon>0$ be a positive number.
By taking $a=a_k^{-}-\varepsilon$
and $b=a_k^{-}$,
 we have from Lemma \ref{lemma:monotone-of-bbewv} (ii) that 
 \bgeq
\bbe_{w^-w^+}[\hat{v}(A_k-a_k^{-}+\varepsilon)]=\bbe_{w^-w^+}[\hat{v}(A_k-a)]
>\bbe_{w^-w^+}[\hat{v}(A_k-b)]
=\bbe_{w^-w^+}[\hat{v}(A_k-a_k^{-})].
 \edeq
Combining this with (\ref{eq:bbe-hatv-etak-gammak-leq}),
we have
\bgeqn
\bbe_{w^-w^+}[\hat v(A_k-a_k^{-}+\varepsilon)]>  0.
\label{eq:strict-etak-gammak-epsilon}
\edeqn
By exploiting (\ref{eq:strict-etak-gammak-epsilon}),
we have that for  $k=1,\cdots,K$,
\bgeqn
\rho_{(\hat v, w^-,w^+)}(A_k)
&=&\inf\{x \in \R: \bbe_{w^-,w^+}[\hat v(A_k-x)]\leq 0\} \nonumber \\
&=& a_k^{-}+\inf\{x' \in \R
: \bbe_{w^-,w^+}[\hat v(A_k-a_k^{-}-x')]\leq 0\}\nonumber \\
&\geq & a_k^{-},
\label{eq:second-constraint-calR-pair}
\edeqn
which proves (\ref{eq:calR-pair-second-inequality}) as desired.
Combining (\ref{eq:calR-pair-first-inequality}), (\ref{eq:calR-pair-second-inequality})
and the definition of ${\cal R}^K_{ce}$ in  (\ref{eq:constraint-calR-pair}),
we 
conclude that
$\rho_{(\hat{v}, w^-,w^+)} \in  {\cal R}^K_{ce}$
and hence $\hat{v}\in {\cal V}_{{\cal R}^K_{ce}}$.
This completes the first part of the proof.

\underline{Next,
we will show that ${\cal V}^K_{ce}\supseteq {\cal V}_{{\cal R}^K_{ce}}$}. 
Let $\hat{v}\in {\cal V}_{{\cal R}^K_{ce}}$,
which means
$\rho_{(\hat v, w^-,w^+)}\in  {\cal R}^K_{ce}$ by (\ref{eq:defi-calV-R-pair}),
that is,
(\ref{eq:calR-pair-second-inequality}) 
and (\ref{eq:calR-pair-first-inequality}) hold.
By exploiting (\ref{eq:calR-pair-first-inequality}),
i.e., $\rho_{(\hat v, w^-,w^+)}(A_k)\leq a_k^{+}$,
we will show (\ref{eq:bbe-hatv-etak-gammak-geq}) hold,
i.e., $\bbe_{w^-w^+}[\hat{v}(A_k-a_k^{+})]\leq 0$.
Since $\rho_{(\hat v, w^-,w^+)}(A_k)$ is the solution of problem (\ref{eq:rho-CPT}) with 
$\xi=A_k$,
we have $\rho_{(\hat v, w^-,w^+)}(A_k)$ satisfies the constraint, 
that is,
\bgeqn
\bbe_{w^-w^+}[\hat v(A_k-\rho_{(\hat v, w^-,w^+)}(A_k))]
\leq 0.
\label{eq:solution-rho-hatv-etak}
\edeqn
Let $\tilde \Omega_k:=\{\omega\in \Omega: A_k(\omega)\leq \rho_{(\hat{v},w^-,w^+)}(A_k)\}$,
$a:=\rho_{(\hat{v},w^-,w^+)}(A_k)\}$
and $b:=a_k^{+}$. Then $a\leq b$, 
Moreover, since $\rho_{(\hat{v},w^-,w^+)}(A_k)\in [a_k^{-},a_k^{+}]\subset 
[\essinf{A_k},\esssup{A_k}]
$
where the inclusion ``$\subset$'' is strict, then
$\mathbb{P}(\tilde \Omega_k)>0$.
It follows from Lemma \ref{lemma:monotone-of-bbewv} (i) that 
 \bgeq 
 \bbe_{w^-w^+}[\hat v(A_k-\rho_{(\hat v, w^-,w^+)}(A_k))]
=\bbe_{w^-w^+}[\hat{v}(A_k-a)]
\geq \bbe_{w^-w^+}[\hat{v}(A_k-b)] 
 =\bbe_{w^-w^+}[\hat v(A_k-a_k^{+})].
 \edeq

Likewise, by exploiting  (\ref{eq:calR-pair-second-inequality})
i.e., $\rho_{(\hat v, w^-,w^+)}(A_k)\geq a_k^{-}$,
we 
prove (\ref{eq:bbe-hatv-etak-gammak-leq}) holds,
i.e., 
$\bbe_{w^-,w^+}[\hat v(A_k-a_k^{-})] 
\geq 0,\; \inmat{for}\; k=1,\cdots,K. 
$
Since $\rho_{(\hat v, w^-,w^+)}(A_k)$ is the solution of problem (\ref{eq:rho-CPT}) with 
$\xi=A_k$,
we have $\rho_{(\hat v, w^-,w^+)}(A_k)$ satisfies the indifference equation,
that is,
\bgeqn
\bbe_{w^-w^+}[\hat v(A_k-\rho_{(\hat v, w^-,w^+)}(A_k))]
= 0.
\label{eq:solution-rho-hatv-etak-}
\edeqn
Moreover, 
set $a=a_k^{-}$ and $b=\rho_{(\hat{v},w^-,w^+)}(A_k)$.
Since 
$a\leq b$,  
it follows by Lemma \ref{lemma:monotone-of-bbewv} (i) that  \bgeq 
 \bbe_{w^-w^+}[\hat v(A_k-\rho_{(\hat v, w^-,w^+)}(A_k))]
=\bbe_{w^-w^+}[\hat{v}(A_k-b)]
\leq \bbe_{w^-w^+}[\hat{v}(A_k-a)]
 =\bbe_{w^-w^+}[\hat v(A_k-a_k^{-})].
 \edeq
 Combining with (\ref{eq:solution-rho-hatv-etak-}), 
we have
(\ref{eq:bbe-hatv-etak-gammak-leq}) hold.

Thus by using (\ref{eq:calR-pair-second-inequality}) 
and (\ref{eq:calR-pair-first-inequality}),
we derive (\ref{eq:bbe-hatv-etak-gammak-geq}) and (\ref{eq:bbe-hatv-etak-gammak-leq}),
which means that 
for every $\hat v \in {\cal V}_{{\cal R}^K_{ce}}$,
we have $\hat{v}\in {\cal V}^K_{ce}$.
Thus ${\cal V}_{{\cal R}^K_{ce}}\subseteq {\cal V}^K_{ce}$.
The proof is completed.
\hfill $\Box$
\end{proof}

Representing $\rho_{ {\cal R}^K_{ce}}(\xi)$
 as $\rho_{{\cal V}^K_{ce}}(\xi)$ is useful for solving (PRGSR-CPT) since it reduces the evaluation of the PRGSR-CPT to finding the optimal value of a stochastic programming problem with semi-infinite stochastic constraints:
 \bgeqn
\rho_{ {\cal R}^K_{ce}}(\xi)
=\inf\limits_{x\in \R} \{ x: \bbe_{w^-w^+}[v(\xi-x)]\leq0, \forall v\in {\cal V}^K_{ce}\}.
 \edeqn
 In Section \ref{sec:Computational-schemes},
 we will address the computational challenges arising from this reformulation for the case that $\xi$ has a finite discrete distribution.

\subsection{Construction of
${\cal V}$ via 
pairwise comparison approach}
\label{subsec:construct-pariwise-comparison}

In the literature of behavioural economics and preference robust optimization,
a widely used approach for eliciting a decision maker's preference is 
pairwise comparison, that is, the decision maker is given a pair of prospects 
for choice and the outcome of the choice is used to infer the decision maker's 
utility/risk preference. Here we use the same approach for constructing an ambiguity set of 
the value functions 
assuming the weighting functions are known and fixed. Differing from the discussions in the 
preceding subsection, here 
we will not use 
CPT-based shortfall risk measure to represent the DM's preference choice,
rather 
we will use the distorted expected values of prospects, that is,  $\bbe_{w^-w^+}[v(\cdot)]$
to describe the DM's choice.
For example, if the DM's is found to prefer prospect $B$ over $A$, then we claim that 
 $\bbe_{w^-w^+}[v(A)]\leq  \bbe_{w^-w^+}[v(B)]$. This is because we are unable to 
 use $\rho_{(v,w^-,w^+)}(A)\leq  \rho_{(v,w^-,w^+)}(B)$ to infer the corresponding properties of the unknown 
 true value function $v(\cdot)$, see \cite{DGX18} for the same issues in 
 the preference robust SR models. To justify this approach, 
 we note that when DM prefers $B$ to $A$, 
he/she would 
also prefer $B-t$ 
to $A-t$ for any constant $t$. 
Consequently
we have 
$$
\bbe_{w^-w^+}[v(A-t)]\leq  \bbe_{w^-w^+}[v(B-t)], \forall t\in \R.
$$
This inequality implies $\rho_{(v,w^-,w^+)}(A)\leq  \rho_{(v,w^-,w^+)}(B)$. 
The converse argument might not hold.
So we may regard the preference represented by 
functional $\bbe_{w^-w^+}[v(\cdot)]$ as CPT-based 
preference representation, in other words, 
the DM's preference in this case can be represented by a ``stronger'' quantitative indicator.

Let $\{\gamma_m, \eta_m\}_{m=1}^{M}$ be a set of random variable. Suppose that the DM  is found to prefer $\eta_m$ over $\gamma_m$ for $m=1,\cdots, M$.
Based on the discussions above, 
we can use the preference information to define 
an
ambiguity set of the value functions 
\bgeqn 
{\cal V}^M_{pair}:=
\left\{v\in {\mathscr V}: \mathbb E_{w^-w^+}[v(\gamma_m)]\leq \mathbb E_{w^-w^+}[v(\eta_m)],\; m=1,\cdots,M \right\}.
\edeqn

In some cases, it might be possible to know roughly the shape of the value function such as convexity, concavity and $S$-shapedness. In the literature of behavioural economics, $S$-shapedness of a value function is widely adopted.  
Let 
\bgeqn 
\mathcal V_S :=\{v\in \mathscr V: v^+ \mbox{ is concave, } v^- \mbox{ is convex}\}.
\edeqn
This kind of information can usually be more easily elicited. For example, if a DM is found to be risk taking in losses and risk averse in gains, then the value function should be $S$-shaped.

Summarizing the elicitation approaches that we have discussed, we may define 
the ambiguity set of the value functions broadly as
\bgeqn
{\cal V}_{El}:={\cal V}^K_{ce}\cap {\cal V}^M_{pair}\cap {\cal V}_{S}
\label{eq:V-el}
\edeqn 
albeit in practice we may only use part of them.

\subsection{Construction of ambiguity set ${\cal W}$ of weighting functions}

Next, we discussion how to construct ${\cal W}$.
The weighting function reflects 
the DM's attitudes toward real probabilities of random losses/gains. 
A lot of discussions in behavioural economics are devoted to the topic.
Empirical evidences suggest that
the weighting functions in decision making problems are typically
inverse-S shaped (initially concave 
over interval $(0,p^*)$ for some $p*\in (0,1)$
and then convex over $(p^*,1)$).
The inverse-S shape of weighting function reflects 
that the DM tends to overweight probabilities of extreme outcomes
(including suffering large losses and small losses).
For instance, in insurance pricing problem,
 this is consistent with individuals' demand for insuring large losses
 and the eagerness to insure small losses
 and insurer should pay more attention on these two extreme outcomes.

In this section, we assume that the true 
weighting function of a decision maker is roughly known 
either based on empirical data or based on a subjective judgement. For example, Escobar and Pflug \cite{EsP18}  propose an effective approach to construct a step-like distortion function with empirical data and use it to approximate the true unknown distortion/weighting function in an insurance premium calculation model.
We call such an estimated weighting function as nominal weighting function. 
Instead of using the nominal weighting function 
directly for the calculation of the preference robust shortfall risk in (\ref{eq:PRGRS-CPT}), we take a precaution to construct an ambiguity 
set of weighting functions near the nominal for (\ref{eq:PRGRS-CPT}).
Here we consider a ``ball'' of weighting functions centered at the nominal weighting function under some pseudo-metric.  

\begin{definition}
[Pseudo-metric $\dd_{\tilde g}$]
\label{defi-tilde-g}
Let $$
\mathscr G:=\left\{g:[0,1]\rightarrow \R, g \mbox{ is measurable },
|g(t)|\leq \tilde g(t) \mbox{ for $t\in [0,1]$ with } \int_{0}^{1}\tilde g(t)dt<+\infty\right\}.
$$
The pseudo-metric $\dd_{\mathscr G}$ of weighting functions in the space of $\mathscr W$  is defined as
$$
\dd_{\tilde g}(w_1,w_2):=\sup_{g\in \mathscr G} \left|\int_{0}^{1}g(t)\psi_1(t) dt-\int_{0}^{1}g(t)\psi_2(t) dt\right|,
$$
where $\psi_1$ and $\psi_{2}$ are
derivatives of the weighting functions $w_1$ and $w_2$ respectively.
\end{definition}
Note that in this definition $\dd_{\tilde g}(w_1,w_2)=0$
if and only if $\int_{0}^{1}g(t)\psi_1(t) dt=\int_{0}^{1}g(t)\psi_2(t) dt$ for all $g\in \mathscr{G}$, but $\psi_1$ is not necessarily equal to
$\psi_2$ unless the set $\mathscr{G}$ is sufficiently large. If we interpret $\mathscr{G}$ as a set of test functions, then $\dd_{\tilde g}(w_1,w_2)=0$ means 
that there is no difference between the two weighting functions for the specified set of test functions/cases.
Moreover, we implicitly assume that
the weighting functions are differentiable with at most
a finite number of nondifferentiable points over $(0,1)$.
The reason why we use the pseudo-metric as opposed to other metrics/distance  is because it is easy to use 
particularly in the derivation of tractable formulations of PRGSR-CPT in the next section.

Let $\mathscr W_T\subset \mathscr{W}_{in S}$ be 
the set of 
all piecewise linear 
inverse S-shaped 
functions $w:[0,1]\rightarrow [0,1]$
 with breakpoints 
 $0=t_1<\cdots<t_{T+1}=1$ with 
 $t_{l_{p_*}}=p_*$,
 where 
 $
 l_{p_*}\in\{1,\cdots,T+1\}.
 $
Let 
\bgeqn
\label{defi:Psi}
\Psi_T:=\{\psi:[0,1]\rightarrow \mathbb R_+: 
\psi \inmat{ is decreasing over } (0,p^*), \inmat{ increasing over } (p^*,1)\}
\edeqn
be the set of the derivative functions of $\mathscr W_T$.
Obviously 
$\Psi_T$ is a set of left continuous step-like functions defined over $[0,1]$
with jumps at the breakpoints $t_2,\cdots, t_T$.
Then we can write $\Psi_T$ as:
\bgeqn
\label{defi:Psi-T}
\Psi_T=\left \{\psi:[0,1]\rightarrow \mathbb R_+: 
 \begin{array}{ll}
 &\psi(t)=
 \sum_{l=1}^T \psi_l \mathds{1}_{[t_l,t_{l+1})}(t),\; \sum_{l=1}^{T} \psi_l(t_{l+1}-t_l)=1 \\
&\psi_{l+1}\leq \psi_l, l=1,\cdots,l_{p_*}-2,
\psi_{l}\leq \psi_{l+1}, l=l_{p_*},\cdots,T
  \end{array}
  \right\},\;\;
 \edeqn
 where $\psi_l, l=1,\cdots,T$ are the slopes of linear pieces of $w\in \mathscr W_T$ and
 $ \mathds{1}_{S}$ is the indicator function
 of  set $S$ with $\mathds{1}_{S}(t)=1$ for $t\in S$ and $0$ otherwise.
 \begin{definition}
[$\tilde g$-ball of weighting functions]
\label{defi-Ball-N}
 Let  $w_0\in  {\mathscr W}_T$ be a weighting function which is 
 piecewise linear 
 and inverse S-shaped. 
 The $\tilde g$-ball of inverse $S$-shaped weighting functions 
 centered at $w_0$ with radius $r$  is defined as
\bgeq
B(w_0,r):=\left\{w \in {\mathscr W}_{inS}:
\dd_{\tilde g}(w,w_0)\leq r
\right\},
\edeq
where the radius is related to  
the DM's confidence 
about $w_0$.
\end{definition}

Note that $B(w_0,r)$ may contain inverse $S$-shaped weighting functions which are not piecewise linear. However, we will show later on that the worst case weighting function is piecewise linear, see Theorem \ref{thm:reformulation-constraint-function-VW}. 
Note also that when radius $r$ is driven to $0$, 
$B(w_0,r)$ shrinks to a singleton
$\{w_0\}$ in the case that $\dd_{\tilde g}$ is a full metric.
Based on the above comment, 
in this paper,
for facilitating computation,
we consider the ambiguity set of weighting functions as follows
\bgeqn
\label{eq:Wr}
{\cal W}_{r}=B(w_0,r).
\edeqn
Note that in the case when $\tilde g$ takes a specific function, 
we can derive the $\dd_{\tilde g}$-ball explicitly.

\begin{example}
Let $\tilde g(t)=1$. Then $\tilde g$-ball in Definition \ref{defi-Ball-N} collapses to the following ball in the $L_1$-metric sense:
\bgeqn
\label{ambiguit_W2}
B_{L_1}(w_0,r):=\left\{w \in \mathscr W_{inS}:
 \dd_1(w,w_0)\leq r
\right\},
\edeqn
where $\dd_1(w,w_0):=\int_{0}^{1}|\psi(t)-\psi^0(t)|dt$.
\end{example}

\section{Computational schemes}
\label{sec:Computational-schemes}
In this section,
we discuss how to calculate 
PRGSR-CPT 
$\rho_{{\cal V}\times {\cal W}}(\xi)$ for a given prospect $\xi$ when the ambiguity sets ${\cal V}$ and ${\cal W}$ are given.
Specifically, we consider how to solve the following minimization 
problem efficiently
\bgeqn
 \label{eq:robust-V-W-computation}
\inmat{(PRGSR-CPT-S)}\quad &\inf\limits_{x\in \R}& x \nonumber \\
 &{\rm s.t.}&  \sup_{(v,w^-,w^+)\in {\cal V}_{El}\times {\cal W}_r\times {\cal W}_r} \bbe_{w^-w^+}[v(\xi-x)] \leq 0,
\edeqn
where ${\cal V}_{El}$ and ${\cal W}_r$ are 
 defined in (\ref{eq:V-el}) and (\ref{eq:Wr}) respectively. We use (PRGSR-CPT-S) to indicate 
 that this is a (PRGSR-CPT) when the ambiguity sets
 take a specific structure. Throughout this section, we confine our discussions to the case that the random variable $\xi$ is finitely discretely distributed.
In the case that the distribution is continuous,
we can use the sample average approximation method to discretize, 
see, e.g. Guo and Xu \cite{GuX21}.
Problem (\ref{eq:robust-V-W-computation})
is mathematical program with
semi-infinite constraints.
To tackle the problem, we 
propose to derive a dual formulation of the constraint function.
To this end, we need to introduce the following notation.

Let $\Xi :=\{\xi_1,\cdots,\xi_N\}\subset \R$ denote the support set of $\xi$ with $P(\xi=\xi_i)=p_i$ for $i=1,\cdots,N$.
By sorting out the elements in $\Xi$  in 
an increasing order 
($\xi_{j_1}\leq \xi_{j_2}$, $j_1<j_2$,),
we obtain 
$
\xi_{j_1}-x\leq \xi_{j_2}-x \mbox{ for }j_1<j_2.
$
By convention,
for 
the outcomes $\xi_{j}-x<0$, we label them with indexes 
$j\in \{-m,-m+1,\cdots,-1\}$
and 
for those $\xi_j-x\geq 0$, we label them
by $j\in \{0,\cdots,n\}$.
Consequently we can rewrite the integral 
$
\bbe_{w^-w^+}[v(\xi-x)]=\int_{-\infty}^{0} v^-(t) dw^-(F_{\xi-x}(t))+\int_{0}^{\infty} v^+(t) d(1-w^+(1-F_{\xi-x}(t)))
$
as 
\bgeqn
\label{saa_ww}
&&\bbe_{w^-w^+}[v(\xi-x)]= \sum_{i=-m}^n \pi_i v(\xi_i-x),
\edeqn
where $v(\xi_i-x) =v^-(\xi_i-x)$ 
for $i=-m,\cdots,-1$,
and $v(\xi_i-x) =v^+(\xi_i-x)$ for $i=0,\cdots,n$,
and
\bgeqn
\label{pi_omega}
\pi_i :=\left\{
\begin{array}{ll}
\displaystyle 
w^-(p_{-m})& \inmat{for} \; i=-m\\[4pt]
\displaystyle w^-\left(\sum_{j=-m}^ip_j\right)-w^-\left(\sum_{j=-m}^{i-1}p_j\right)    & \inmat{for} \; i=-m+1,\cdots,-1\\[4pt]
\displaystyle 
w^+\left(\sum_{j=i}^{n}p_j\right)-w^+\left(\sum_{j=i+1}^np_j\right)    & \inmat{for} \; i=0,1,\cdots, n-1 \\[4pt]
w^+(p_{n})& \inmat{for} \; i=n.\\[4pt]
\end{array}
\right.
\edeqn
In the case that $w^+(p) = 1-w^-(1-p), \forall p\in[0,1]$,
we have $\sum_{i=-m}^n \pi_i=1$,
and for $i=0,\cdots,n-1,$
\bgeq
\pi_i=1-w^-\left(1-\sum_{j=i}^{n}p_j\right)-\left(1-w^-\left(1-\sum_{j=i+1}^np_j\right)\right)
=w^-\left(\sum_{j=-m}^ip_j\right)-w^-\left(\sum_{j=-m}^{i-1}p_j\right).   
\edeq
Consequently, $\pi_i$ can be expressed
$\pi_i=w^-\left(\sum_{j=-m}^ip_j\right)-w^-\left(\sum_{j=-m}^{i-1}p_j\right), \text{for} \; i=-m+1,\cdots, n$.

\subsection{
Reformulation of (PRGSR-CPT-S)}
With the notation introduced above, we can recast 
(PRGSR-CPT-S) as (PRGSR-CPT-S-Dis) (where ``Dis'' is used to indicate the case that $\xi$ is discretely distributed):
\bgeqn
 \label{eq:robust-V-W-computation-SAA}
\inmat{(PRGSR-CPT-S-Dis)}\quad 
\rho_{{\cal V}_{El} \times {\cal W}_r}(\xi)
=&\inf\limits_{x\in \R}& x \nonumber \\
 &{\rm s.t.}&  \sup_{(v,w^-,w^+)\in {\cal V}_{El} \times {\cal W}_r\times {\cal W}_r} \sum_{i=-m}^n \pi_i v(x-\xi_i) \leq 0.
\edeqn
In this section,
we confine our study to the case that value function is defined over $[\alpha,\beta]$ and take fixed values at points $\alpha,\beta$ with $\alpha<0<\beta$.
The next theorem states that 
the constraint in program (PRGSR-CPT-S-Dis)
can be computed by solving a finite dimensional 
biconvex program of reasonable size.
\begin{theorem}
\label{thm:reformulation-constraint-function-VW}
For 
any fixed $x$,
the constraint function 
$$
\sup_{(v,w^-,w^+)\in {\cal V}_{El}\times {\cal W}_r \times {\cal W}_r} \sum_{i=-m}^n \pi_i v(\xi_i-x)
$$ 
in (\ref{eq:robust-V-W-computation-SAA})
is the optimal value of the following biconvex program:
\begin{subequations}
\label{robust_v_o}
\begin{align}
 & \sup_{a,b,\psi^-, \psi^+, \eta^-,\theta^-, \eta^+,\theta^+} \left\{\sum_{i=-m}^{-1} 
\Big( (a_1(\xi_i-x)+b_1)\mathds{1}_{(-\infty,0)\cap[y_1,y_2]}(\xi_i-x) \right. \nonumber \\
& \qquad \qquad \qquad  +\sum_{j=2}^{J-1}(a_j(\xi_i-x)+b_j)\mathds{1}_{(-\infty,0)\cap(y_j,y_{j+1}]}(\xi_i-x) \Big) \sum_{l=1}^T\psi_l^-\int_{q_{i-1}}^{q_{i}}\mathds{1}_{[t_l,t_{l+1})}(t)dt \nonumber \\
& \qquad \qquad \qquad 
+\sum_{i=0}^{n} \Big(
(a_1(\xi_i-x)+b_1)\mathds{1}_{(-\infty,0)\cap[y_1,y_2]}(\xi_i-x)\nonumber  \\
& \left. \qquad \qquad \qquad 
+\sum_{j=2}^{J-1}(a_j(\xi_i-x)+b_j)\mathds{1}_{(-\infty,0)\cap(y_j,y_{j+1}]}(\xi_i-x) \Big)
\sum_{l=1}^T\psi^+_l\int_{1-q_{i}}^{1-q_{i-1}}\mathds{1}_{[t_l,t_{l+1})}(t)dt
\right\}\nonumber \\
&\qquad  {\rm s.t.}\qquad \;\;
\psi^-_{l+1}\leq \psi^-_{l}, l=1,\cdots,l_{p_*}-2, \label{robust_Wra} \\
 &\qquad \qquad \qquad
 \psi^+_{l+1}\leq \psi^+_{l}, l=1,\cdots,l_{p_*}-2, \label{robust_Wrb} \\
&\qquad \qquad \qquad
\psi^-_l\leq \psi^-_{l+1},  l=l_{p_*},\cdots,T-1,\label{robust_Wrc} \\
 &\qquad \qquad \qquad
 \psi^+_l\leq \psi^+_{l+1}, l=l_{p_*},\cdots,T-1,\label{robust_Wrd} \\
&\qquad \qquad \qquad
\sum_{l=1}^{T} \psi^-_l(t_{l+1}-t_l)=1,\label{robust_Wre} \\
&\qquad \qquad \qquad
 \sum_{l=1}^{T} \psi^+_l(t_{l+1}-t_l)=1,\label{robust_Wrf} \\
& \qquad \qquad \qquad \sum_{l=1}^T(\eta_l^-+\theta_l^-) \int_{t_l}^{t_{l+1}} \tilde g(t) dt\leq r,\label{robust_Wrg} \\
& \qquad \qquad \qquad \sum_{l=1}^T(\eta_l^++\theta_l^+) \int_{t_l}^{t_{l+1}} \tilde g(t) dt\leq r,\label{robust_Wrg-2} \\
& \qquad \qquad \qquad \psi^-_l-\psi^0_{l}+\eta_l^--\theta_l^- =0, l=1,\cdots,T, \label{robust_Wrh} \\
& \qquad \qquad \qquad \psi^+_l-\psi^0_{l}+\eta_l^+-\theta_l^+ =0, l=1,\cdots,T, \label{robust_Wri} \\
& \qquad \qquad \qquad\psi_l^- > 0,\psi_l^+ > 0, \eta_l^-,\theta_l^-,\eta_l^+,\theta_l^+\geq 0, l=1,\cdots, T,\label{robust_Wrj} \\
& \qquad \qquad \qquad \sum_{j=1}^{J-1} a_j \int_{y_j}^{y_{j+1}} \phi_m(t)dt \leq 0, m=1,\cdots, M,\label{robust_v_ob}\\
& \qquad \qquad \qquad 
a_{j-1}y_j +b_{j-1}=a_jy_j+b_j, j=2,\cdots, J-1,\label{robust_v_oc}\\
& \qquad \qquad \qquad 
a_j\leq a_{j+1}, j=1, \cdots, j_{0}-2,\label{robust_v_od}\\
& \qquad \qquad \qquad 
a_{j+1}\leq a_j, j=j_{0},\cdots, J-2, \label{robust_v_oe}\\
& \qquad \qquad \qquad 
a_{j_{0}}0+b_{j_{0}}=0,\;
a_1y_1+b_1=b_l,\;
a_{J-1}y_{J}+b_{J-1}=b_r,\label{robust_v_oh}\\
& \qquad \qquad \qquad  
 \hat{c}^k\leq 0,\;\; 
\tilde c^k\geq 0, k=1,\cdots,K, \\
&\qquad \qquad \qquad  a_j>0, j=1,\cdots,J, \label{robust_v_oj} 
\end{align}
\end{subequations}
where $j_{0}:=\{j\in \{1,\cdots,J\}: y_{j_{0}}=0\}$,
\bgeq
 \hat c^k&=& 
\Big( (a_1(A_k^1-a_k^{+})+ b_1)\mathds{1}_{(-\infty,0)\cap[y_1,y_2]}(A_k^1-a_k^{+}) \\
&&  +\sum_{j=2}^{J-1}( a_j(A_k^1-a_k^{+})+ b_j)\mathds{1}_{(-\infty,0)\cap(y_j,y_{j+1}]}(A_k^1-a_k^{+}) \Big)
\left(w_*^-\left(q^1_k\right)-w_*^-\left(q^{0}_k\right)\right) \\
&&
+\Big(
(a_1(A_k^2-a_k^{+})+ b_1)\mathds{1}_{(0,\infty)\cap[y_1,y_2]}(A_k^2-a_k^{+}) \\
&& 
+ \sum_{j=2}^{J-1}( a_j(A_k^2-a_k^{+})+ b_j)\mathds{1}_{(0,\infty)\cap(y_j,y_{j+1}]}(A_k^2-a_k^{+})\Big)
\left(w_*^+\left(1-q^1_{k}\right)-w_*^+\left(1-q^{2}_k\right) \right),
\edeq
\bgeq
\tilde c^k &=&
\Big( ( a_1(A_k^1-a_k^{-})+ b_1)\mathds{1}_{(-\infty,0)\cap[y_1,y_2]}(A_k^1-a_k^{-}) \\
&&  +\sum_{j=2}^{J-1}( a_j(A_k^1-a_k^{-})+ b_j)\mathds{1}_{(-\infty,0)\cap(y_j,y_{j+1}]}(A_k^1-a_k^{-}) \Big)
\left(w_*^-\left(q^1_k\right)-w_*^-\left(q^{0}_k\right)\right)\\
&&
+\Big(
(a_1(A_k^2-a_k^{-})+ b_1)\mathds{1}_{(0,\infty)\cap[y_1,y_2]}(A_k^2-a_k^{-}) \\
&& 
+\sum_{j=2}^{J-1}( a_j(A_k^2-a_k^{-})+ b_j)\mathds{1}_{(0,\infty)\cap(y_j,y_{j+1}]}(A_k^2-a_k^{-}) \Big)\left(w_*^+\left(1-q^{1}_k\right)-w_*^+\left(1-q^{2}_k\right)\right),
\edeq
$q_{-m-1}:=0,$ $q_i:=\sum_{j=-m}^{i}p_j$ for $i=-m,\cdots,n$,
$q_k^{0}:=0$,
 $q_k^{1}:=1-p_k$,
$q_k^{2}:=1$, 
$A_k^1=0,A_k^2=r_k$ ($p_k$ and $r_k$ are defined as in Section \ref{section-construct-V}, Q2)
and $\{y_j\}_{j=1}^{J}$ is the $j$-th entry of ranked support set 
\bgeqn 
\mathscr S &:=& \bigcup_{m=1}^M( \supp(\gamma_m)\cup \supp(\eta_m))\bigcup\{\alpha,\beta\}\bigcup \{0\}\nonumber\\
&&\quad \bigcup \cup_{i=1}^N(\xi_i-x)\bigcup \cup_{k=1}^K\supp(A_k-a_k^{+})\bigcup \cup_{k=1}^K\supp(A_k-a_k^{-}),
\label{eq:S-support}
\edeqn
and $J:=|\mathscr S|=3M+3+N+4K$.
$\phi_m$ in condition (\ref{robust_v_ob}) is defined as in Section \ref{section-construct-V}, Q1.
Condition (\ref{robust_v_ob}) characterizes the DM's preference between variables $\gamma_m$ and $\eta_m$,
$m=1,\cdots,M$,
conditions (\ref{robust_v_oc}), (\ref{robust_v_od}) and (\ref{robust_v_oe}) represents the increasing of value function $v$ 
and convexity of $v^-$ and concavity of $v^+$,
conditions 
(\ref{robust_v_oh}) represents three fixed values of $v$.
\end{theorem}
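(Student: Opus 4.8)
The plan is to decouple the inner supremum into a maximization over the value function and a maximization over the pair of weighting functions, to argue that in each block the extremal function may be taken piecewise linear, and finally to observe that the objective $\sum_{i=-m}^{n}\pi_i v(\xi_i-x)$ is bilinear in the two blocks: the coupling enters only through $\pi_i$, which is linear in the slopes of $w^-,w^+$, multiplied by $v(\xi_i-x)$, which is linear in the slopes and intercepts of $v$. Separate linearity in each block together with a bilinear objective is exactly what produces the biconvex program in (\ref{robust_v_o}). Throughout, I would keep the weighting functions fixed at the nominal pair $w_*^-,w_*^+$ inside the elicitation constraints defining ${\cal V}_{El}$, so that the certainty-equivalent inequalities ($\hat c^k\le 0$, $\tilde c^k\ge 0$) and the pairwise-comparison inequalities (\ref{robust_v_ob}) are linear in the value-function variables only and introduce no further bilinearity.

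First I would reduce the value function. Given any $v\in{\cal V}_{El}$, let $\hat v$ be the continuous piecewise-linear interpolant of $v$ on the ranked node set $\mathscr S$ of (\ref{eq:S-support}). Since the objective and all three families of constraints defining ${\cal V}_{El}={\cal V}^K_{ce}\cap{\cal V}^M_{pair}\cap{\cal V}_S$ evaluate $v$ only at nodes of $\mathscr S$ (the points $\xi_i-x$, $A_k-a_k^{\pm}$, $\supp(\gamma_m)\cup\supp(\eta_m)$, and $\alpha,\beta,0$), both the objective value and feasibility are preserved by passing to $\hat v$. The one admissibility point to check is that $\hat v$ stays in ${\cal V}_{El}$: monotonicity is inherited because interpolation of an increasing function is increasing, and S-shapedness is inherited because the chord slopes of a function convex on $\R_-$ (resp.\ concave on $\R_+$) are nondecreasing (resp.\ nonincreasing), which is precisely the slope ordering (\ref{robust_v_od})--(\ref{robust_v_oe}) with the switch at the node $y_{j_0}=0$. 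Writing $\hat v(t)=a_jt+b_j$ on $[y_j,y_{j+1}]$, encoding continuity by (\ref{robust_v_oc}), the fixed values $v(0)=0$, $v(\alpha)=b_l$, $v(\beta)=b_r$ by (\ref{robust_v_oh}), and slope positivity by (\ref{robust_v_oj}), I would conclude that the supremum over ${\cal V}_{El}$ equals the supremum over the finite-dimensional polyhedron in $(a,b)$.

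Next I would reduce each weighting function. The objective depends on $w^-$ (resp.\ $w^+$) only through the increments $w^-(q_i)-w^-(q_{i-1})=\int_{q_{i-1}}^{q_i}\psi^-(t)\,dt$ inside $\pi_i$, i.e.\ through cell integrals of the derivative $\psi^-$. For any $w\in{\cal W}_r=B(w_0,r)$ I would replace $\psi^-$ by its Lebesgue average $\hat\psi^-_l$ on each grid cell $[t_l,t_{l+1})$ of $\mathscr W_T$. This leaves the cell integrals, hence the objective and the normalization $\int_0^1\psi^-=1$ (\ref{robust_Wre}), unchanged; it preserves the inverse-S shape, because cell-averages of a function decreasing on $(0,p^*)$ and increasing on $(p^*,1)$ are again monotone in the required directions (\ref{robust_Wra})--(\ref{robust_Wrd}), using that $p^*=t_{l_{p_*}}$ is a grid point; and, as discussed below, it does not increase the distance to $w_0$. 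For the distance I would use that in Definition \ref{defi-tilde-g} the optimal test function is $g=\tilde g\,\mathrm{sign}(\psi-\psi^0)$, so $\dd_{\tilde g}(w,w_0)=\int_0^1\tilde g\,|\psi-\psi^0|$, which for step functions collapses to $\sum_l|\psi_l-\psi^0_l|\int_{t_l}^{t_{l+1}}\tilde g$. Introducing the nonnegative splitting $\psi_l^--\psi_l^0=\theta_l^--\eta_l^-$ linearizes $|\psi_l^--\psi_l^0|$ and turns the ball constraint of Definition \ref{defi-Ball-N} into (\ref{robust_Wrg}) and (\ref{robust_Wrh}), and analogously for $w^+$ via (\ref{robust_Wrg-2}) and (\ref{robust_Wri}).

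I expect the delicate step to be exactly this distance-nonincrease under cell-averaging, namely verifying $\int_0^1\tilde g\,|\hat\psi-\psi^0|\le\int_0^1\tilde g\,|\psi-\psi^0|$: the Jensen bound $|\hat\psi_l-\psi^0_l|=|\mathrm{avg}_{[t_l,t_{l+1})}(\psi-\psi^0)|\le\mathrm{avg}_{[t_l,t_{l+1})}|\psi-\psi^0|$ only closes after pairing against $\tilde g$ cell-by-cell, which is immediate when $\tilde g$ is constant on cells (as in the $L_1$ instance $\tilde g\equiv 1$ of the example following Definition \ref{defi-Ball-N}) and is the point requiring care for general $\tilde g$. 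This is also where the remark after Definition \ref{defi-Ball-N} is cashed out: although $B(w_0,r)$ contains non-piecewise-linear functions, the worst case is attained at a piecewise-linear $w$. Once both reductions are in place, substituting the piecewise-linear $v$ and $w^{\pm}$ into $\sum_{i=-m}^{n}\pi_i v(\xi_i-x)$ yields exactly the bilinear objective of (\ref{robust_v_o}); the value-block and weighting-block constraints are each linear, so the program is biconvex and its optimal value coincides with the original worst-case constraint function, as claimed.
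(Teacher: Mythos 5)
Your proposal is correct and follows essentially the same route as the paper's proof: decouple the supremum, replace $v$ by its piecewise-linear interpolant on the node set $\mathscr S$ (all elicitation constraints and the objective evaluate $v$ only at those nodes), replace $w^{\pm}$ by piecewise-linear weighting functions whose derivatives are cell averages on a grid containing the $q_i$, $1-q_i$ and $p^*$, and linearize the $\dd_{\tilde g}$-ball constraint (you via the extremal test function $g=\tilde g\,\mathrm{sign}(\psi-\psi^0)$, the paper via LP duality --- equivalent), arriving at the same bilinear objective. The one step you flag as delicate --- that cell-averaging does not increase the $\dd_{\tilde g}$-distance when $\tilde g$ is not constant on grid cells --- is precisely the point the paper glosses over by asserting that the interpolant has exactly the same distance to $w_0$, so your treatment is if anything more careful there.
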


The proof is a bit long, we defer it to 
Appendix \ref{app:proof-theorem4-1}.

In general, it is difficult to solve the biconvex program (\ref{robust_v_o}).
Haskell et al.~\cite{HFD16} consider a preference robust optimization model  
where the ambiguity lies not only in 
the DM' risk preferences 
but also in the 
probability distribution of exogenous uncertainties.
The resulting minmax optimization model is a bilinear programming problem.
The authors show how to solve their problem directly 
in a special case when
the ambiguity set of the probability 
distributions can be represented by the 
mixtures of some given distributions. 
However, the bilinear programming problem is generally difficult to solve.
They 
propose 
two 
techniques 
to address 
the difficulty:
a reformulation-linearization technique (RLT) and an semidefinite programming (SDP) relaxation approach.
Both are approximation schemes and 
may introduce many variables
if we 
use them 
to deal with the biconvex structure 
in (\ref{robust_v_o}) directly.
This is undesirable particularly when $N$ is large (
$\xi$ has many scenarios).

Here, we take a different approach. 
We can reformulate program (\ref{robust_v_o})
as a new linear program 
by changing some variables as follows:
\bgeq
\tilde a_j^{l-} :=a_j \psi_l^-,
\tilde a_j^{l+} :=a_j \psi_l^+,
\tilde b_j^{l-} :=b_j \psi_l^-,
\tilde b_j^{l+} :=b_j \psi_l^+,
\; \inmat{for} \; j=1,\cdots,J,
l=1,\cdots,T.
\edeq
that is,
\begin{subequations}
\label{robust_v_o-LP}
\begin{align}
 & \sup_{\tilde a^-,\tilde b^-,\tilde a^+,\tilde b^+,\psi^-,\psi^+, \eta^-,\theta^-,\eta^+,\theta^+} \left\{\sum_{i=-m}^{-1} \sum_{l=1}^T
\Big( (\tilde a_1^{l-}(\xi_i-x)+\tilde b_1^{l-})\mathds{1}_{(-\infty,0)\cap[y_1,y_2]}(\xi_i-x) \right. \nonumber \\
& \qquad \qquad \qquad  +\sum_{j=2}^{J-1}(\tilde a_j^{l-}(\xi_i-x)+\tilde b_j^{l-})\mathds{1}_{(-\infty,0)\cap(y_j,y_{j+1}]}(\xi_i-x) \Big)
\int_{q_{i-1}}^{q_{i}}\mathds{1}_{[t_l,t_{l+1})}(t)dt \nonumber \\
& \qquad \qquad \qquad 
+\sum_{i=0}^{n}\sum_{l=1}^T \Big(
(\tilde a_1^{l+}(\xi_i-x)+\tilde b_1^{l+})\mathds{1}_{(0,\infty)\cap[y_1,y_2]}(\xi_i-x)\nonumber  \\
& \left. \qquad \qquad \qquad 
+\sum_{j=2}^{J-1}(\tilde a_j^{l+}(\xi_i-x)+\tilde b_j^{l+})\mathds{1}_{(0,\infty)\cap(y_j,y_{j+1}]}(\xi_i-x) \Big)
\int_{1-q_{i}}^{1-q_{i-1}}\mathds{1}_{[t_l,t_{l+1})}(t)dt\right\}\nonumber \\
&\qquad  {\rm s.t.}\qquad (\ref{robust_Wra})-(\ref{robust_Wrj}), \nonumber \\ 
& \qquad \qquad \qquad \sum_{j=1}^{J-1} \tilde a_j^{l-} \int_{y_j}^{y_{j+1}} \phi_m(t)dt \leq 0, m=1,\cdots, M,l=1,\cdots,T,\label{robust_v_ob-LP}\\
& \qquad \qquad \qquad \sum_{j=1}^{J-1} \tilde a_j^{l+} \int_{y_j}^{y_{j+1}} \phi_m(t)dt \leq 0, m=1,\cdots, M,l=1,\cdots,T,\label{robust_v_ob-LP-2}\\
& \qquad \qquad \qquad 
\tilde a_{j-1}^{l-}y_j+\tilde b_{j-1}^{l-}=\tilde a_{j}^{l-}y_j+\tilde b_{j}^{l-}, j=2,\cdots, J-1, l=1,\cdots,T,\label{robust_v_oc-LP}\\
& \qquad \qquad \qquad 
\tilde a_{j-1}^{l+}y_j+\tilde b_{j-1}^{l+}=\tilde a_{j}^{l+}y_j+\tilde b_{j}^{l+}, j=2,\cdots, J-1, l=1,\cdots,T,\label{robust_v_oc-LP-2}\\
& \qquad \qquad \qquad 
\tilde a_{j}^{l-}\leq \tilde a_{j+1}^{l-}, 
j=1, \cdots, j_{0}-1,
l=1,\cdots,T,\label{robust_v_od-LP}\\
& \qquad \qquad \qquad 
\tilde a_{j}^{l+}\leq \tilde a_{j+1}^{l+},
j=1, \cdots, j_{0}-1,
l=1,\cdots,T,\label{robust_v_od-LP-2}\\
& \qquad \qquad \qquad 
\tilde a_{j+1}^{l-}\leq \tilde a_{j}^{l-},
j=j_{0},\cdots, J-2,
l=1,\cdots,T,
\label{robust_v_oe-LP}\\
& \qquad \qquad \qquad 
\tilde a_{j+1}^{l+}\leq \tilde a_{j}^{l+},
j=j_{0},\cdots, J-2,
l=1,\cdots,T,
\label{robust_v_oe-LP-2}\\
& \qquad \qquad \qquad
\tilde a_{j_{0}}^{l-}0+\tilde b_{j_{0}}^{l-}=0,
l=1,\cdots,T, \label{robust_v_of-LP}\\
& \qquad \qquad \qquad
\tilde a_{j_{0}}^{+}0+\tilde b_{j_{0}}^{l+}=0,
l=1,\cdots,T, \label{robust_v_of-LP-2}\\
&\qquad \qquad \qquad   \tilde a_{1}^{l-}y_1+\tilde b_{1}^{l-}=\psi_l^-b_l,
l=1,\cdots,T,\label{robust_v_og-LP} \\
&\qquad \qquad \qquad  
\tilde a_{1}^{l+}y_1+\tilde b_{1}^{l+}=\psi_l^+b_l,
l=1,\cdots,T,\label{robust_v_og-LP-2} \\
&\qquad \qquad \qquad 
a_{J-1}^{l-}y_{J}+b_{J-1}^{l-}=\psi_l^-b_r,
l=1,\cdots,T,\label{robust_v_oh-LP}\\
&\qquad \qquad \qquad 
a_{J-1}^{l+}y_{J}+b_{J-1}^{l+}=\psi_l^+b_r,
l=1,\cdots,T,\label{robust_v_oh-LP-2}\\
&\qquad \qquad \qquad  \hat c^{l-}_k\geq 0,\;\;
\tilde c^{l-}_k\leq0,k=1,\cdots,K,\label{robust_v_oj-LP}\\
&\qquad \qquad \qquad  \hat c^{l+}_k\geq 0,\;\;
\tilde c^{l+}_k\leq0,k=1,\cdots,K,\label{robust_v_oj-LP-2}\\
&\qquad \qquad \qquad  a_j^{l-}>0, a_j^{l+}>0, j=1,\cdots,J, l=1,\cdots,T,\label{robust_v_ok-LP}
\end{align}
\end{subequations}
where 
$q_{-m-1}=0,$ $q_i=\sum_{j=-m}^{i}p_j$ for $i=-m,\cdots,n$,
$\tilde a^-:=(\tilde a_{j}^{l-})_{J\times T}$,
$\tilde b^-:=(\tilde b_{j}^{l-})_{J\times T}$,
$\tilde a^+:=(\tilde a_{j}^{l+})_{J\times T}$,
$\tilde b^+:=(\tilde b_{j}^{l+})_{J\times T}$,
 $\tilde \psi^-:=(\psi_1^-,\cdots,\psi_T^-)$, 
  $\tilde \psi^+:=(\psi_1^+,\cdots,\psi_T^+)$, 
 $\tilde \psi^0:=(\psi^0_{1},\cdots,\psi^0_{T})$,
 $q_k^{0}=0$,
 $q_k^{1}=1-p_k$,
$q_k^{2}=1$, 
$A_k^1=0,A_k^2=r_k$,
\bgeq
 \hat c^{l-}_k&=& 
\Big( (\tilde a_1^{l-}(A_k^1-a_k^{+})+ \tilde b_1^{l-})\mathds{1}_{(-\infty,0)\cap[y_1,y_2]}(A_k^1-a_k^{+}) \\
&&  +\sum_{j=2}^{J-1}( \tilde a_j^{l-}(A_k^1-a_k^{+})+ \tilde b_j^{l-})\mathds{1}_{(-\infty,0)\cap(y_j,y_{j+1}]}(A_k^1-a_k^{+}) \Big)
\left(w_*^-\left(q^1_k\right)-w_*^-\left(q^{0}_k\right)\right) \\
&&
+\Big(
(\tilde a_1^{l-}(A_k^2-a_k^{+})+ \tilde b_1^{l-})\mathds{1}_{(0,\infty)\cap[y_1,y_2]}(A_k^2-a_k^{+}) \\
&& 
+ \sum_{j=2}^{J-1}( \tilde a_j^{l-}(A_k^2-a_k^{+})+ \tilde b_j^{l-})\mathds{1}_{(0,\infty)\cap(y_j,y_{j+1}]}(A_k^2-a_k^{+})\Big)
\left(w_*^+\left(1-q^1_{k}\right)-w_*^+\left(1-q^{2}_k\right) \right),
\edeq
\bgeq
\tilde c^{l-}_k &=&
\Big( ( \tilde a_1^{l-}(A_k^1-a_k^{-})+ \tilde b_1^{l-})\mathds{1}_{(-\infty,0)\cap[y_1,y_2]}(A_k^1-a_k^{-}) \\
&&  +\sum_{j=2}^{J-1}( \tilde a_j^{l-}(A_k^1-a_k^{-})+ \tilde b_j^{l-})\mathds{1}_{(-\infty,0)\cap(y_j,y_{j+1}]}(A_k^1-a_k^{-}) \Big)
\left(w_*^-\left(q^1_k\right)-w_*^-\left(q^{0}_k\right)\right)\\
&&
+\Big(
(\tilde a_1^{l-}(A_k^2-a_k^{-})+ \tilde b_1^{l-})\mathds{1}_{(0,\infty)\cap[y_1,y_2]}(A_k^2-a_k^{-}) \\
&& 
+\sum_{j=2}^{J-1}(\tilde a_j^{l-}(A_k^2-a_k^{-})+ \tilde b_j^{l-})\mathds{1}_{(0,\infty)\cap(y_j,y_{j+1}]}(A_k^2-a_k^{-}) \Big)\left(w_*^+\left(1-q^{1}_k\right)-w_*^+\left(1-q^{2}_k\right)\right),
\edeq
and $\hat c^{l+}_k$ in (\ref{robust_v_oj-LP-2}) denotes the counterpart of $\hat c^{l-}_k$ with $\tilde a_j^{l-}$ and $\tilde b_j^{l-}$ being replaced by $\tilde a_j^{l+}$ and $\tilde b_j^{l+}$.
$\tilde c^{l+}_k$ in (\ref{robust_v_oj-LP-2}) denotes the counterpart of $\tilde c^{l-}_k$ with $\tilde a_j^{l-}$ and $\tilde b_j^{l-}$ being replaced by $\tilde a_j^{l+}$ and $\tilde b_j^{l+}$.

Moreover,
for the optimal solution of program (\ref{robust_v_o}),
we can solve (\ref{robust_v_o-LP}) to obtain the optimal solution 
\bgeq
\tilde a_j^{*l-},
\tilde a_j^{*l+},
\tilde b_j^{*l-},
\tilde b_j^{*l+},
\psi_l^{*-},
\psi_l^{*+},
 \eta^{*-},\theta^{*-},
  \eta^{*+},\theta^{*+},
j=1,\cdots,J,
l=1,\cdots,T,
\edeq
then we can get the optimal solution of (\ref{robust_v_o})
\bgeq
a_j^*=\frac{\tilde a_j^{*l-}}{\psi_l^{*-}},
b_j^*=\frac{\tilde b_j^{*l-}}{\psi_l^{*-}},
\psi_l^{*-},
\psi_l^{*+},
 \eta^{*-},\theta^{*-},
  \eta^{*+},\theta^{*+},
j=1,\cdots,J,
\inmat{ for some } l\in\{1,\cdots,T\}.
\edeq
This method will be used in Section \ref{sec5:case_study} for numerical tests.

From Proposition \ref{prop:PGSR-robust-equal},
we claim that the proposed PRGSR-CPT (\ref{eq:robust-V-W-computation-SAA})
can be solved by using the following two-stage procedures:

\begin{algorithm}[Two-stage procedure for solving the proposed PRGSR-CPT (\ref{eq:robust-V-W-computation-SAA})]
\label{algorithm:thm4-1}
\
\begin{itemize}
\item[Step 1.]
For each fixed $x$,
solve linear program:
\bgeqn
\label{eq:two-stage-reformulate-tildephi}
 h(x):=
 && \sup_{\tilde a^-,\tilde b^-,\psi^-,\psi^+, \eta,\theta} \left\{\sum_{i=-m}^{-1} \sum_{l=1}^T
\Big( (\tilde a_1^{l-}(\xi_i-x)+\tilde b_1^{l-})\mathds{1}_{(-\infty,0)\cap[y_1,y_2]}(\xi_i-x) \right. \nonumber \\
&& \qquad \qquad \qquad  +\sum_{j=2}^{J-1}(\tilde a_j^{l-}(\xi_i-x)+\tilde b_j^{l-})\mathds{1}_{(-\infty,0)\cap(y_j,y_{j+1}]}(\xi_i-x) \Big)
\int_{q_{i-1}}^{q_{i}}\mathds{1}_{[t_l,t_{l+1})}(t)dt \nonumber \\
&& \qquad \qquad \qquad 
+\sum_{i=0}^{n}\sum_{l=1}^T \Big(
(\tilde a_1^{l+}(\xi_i-x)+\tilde b_1^{l+})\mathds{1}_{(0,\infty)\cap[y_1,y_2]}(\xi_i-x)\nonumber  \\
&& \left. \qquad \qquad \qquad 
+\sum_{j=2}^{J-1}(\tilde a_j^{l+}(\xi_i-x)+\tilde b_j^{l+})\mathds{1}_{(0,\infty)\cap(y_j,y_{j+1}]}(\xi_i-x) \Big)
\int_{1-q_{i}}^{1-q_{i-1}}\mathds{1}_{[t_l,t_{l+1})}(t)dt\right\}\nonumber \\
&&\qquad  {\rm s.t.}\qquad (\ref{robust_Wra})-(\ref{robust_Wrj}),\;
 (\ref{robust_v_ob-LP})-(\ref{robust_v_ok-LP}), 
\edeqn
\item[Step 2.]
Solve the following minimization problem:
\bgeqn
\label{eq:two-stage-shortfall-solve}
\displaystyle \inf_{x\in \R} \{ x :h(x)\leq 0\}.
\edeqn
\end{itemize}
\end{algorithm}

Note that when the 
random variable $\xi$ is essentially 
bounded, we may solve problem (\ref{eq:two-stage-shortfall-solve})
by bisection method. 
Consider, for instance, that $\Xi=[0,0.5]$. 
We begin with an initial feasible solution $x_0$ and 
set $x_1 :=\frac{x_0}{2}$.
If $x_1$ is feasible,
then update the feasible set by $[0, x_1]$, 
otherwise set $[x_1, x_0]$, repeat the process.
We will come back to this in 
Section \ref{sec5:case_study}.

\section{Numerical tests}
\label{sec5:case_study}

To examine the performance of the PRGSR-CPT model and the computational schemes, we carry out some numerical tests
{\color{black} on problem 
(\ref{eq:robust-V-W-computation-SAA})
with an academic example where the true value function and weighting functions are known and how elicited 
preference information may affect the convergence of the worst case value function and weighting functions as more and more information is available. 
Since this is artificially made example, we use the well known random utility split approach \cite{ArD15} to generate pairwise comparison questionnaires (prospects) and then use
the GSR-CPT based on the tuple of true value function and weighting functions to select a preferred prospect (act like the DM). The preference information is then used to
construct the ambiguity set of the value functions.}

All of the numerical experiments are carried out with Matlab 2020b installed on a PC (16GB RAM, CPU 2.3 GHz) with Intel Core i7 processor. In this section, we report the details of the test and the outcomes.

\subsection{Setup}
Let $\xi$ be a random variable which is uniformly distributed 
over a discrete set 
$ 
\Xi=\{\xi_1,\cdots,\xi_N\}$ with $N=10$.
We sort out $\{\xi_i\}_{i=1}^N$ in non-decreasing order,
$\xi_1<\xi_2<\cdots<\xi_N$.
Specifically let 
\bgeqn 
\label{eq:Xi-test} 
\Xi &=&\{\xi_1,\cdots,\xi_{10}\}\nonumber\\
&=&\{0.4074,0.4529,0.0635,0.4567,0.3162,0.0488,0.1392,0.2734,0.4788,0.4824\}.
\edeqn 
We consider 
a specific situation where the true value function 
is 
\bgeq
v_*(x)=\left\{ \begin{array}{ll}
v_*^{+}(x)=x^{1/3}& \text{for} \; x\geq 0,\\
v_*^-(x)=-1.5(-x)^{0.2} & \text{for} \;x<0,
\end{array}
\right.
\edeq
and the true weighting functions $w_*^-(p)=w_*^+(p)$ are piecewise functions with $T$ breakpoints of $w(p)$:
$$
w(p)=\frac{p^{\gamma}}{p^{\gamma}+(1-p)^{\gamma}}, \;\gamma=0.6. 
$$
It is easy to verify that $1-w(1-p)=w(p)$.
Then we consider the case that $\bbe_{w^-w^+}[v(\xi)]=\int_{0}^{0.5} v(t)dw^-(F_{\xi}(t))$
and $1-w(1-p)=w(p)$.
We consider problem (PRGSR-CPT-S-Dis)
defined in (\ref{eq:robust-V-W-computation-SAA}):
\bgeqn
 \label{eq:robust-V-W-computation-SAA-experiment}
\rho_{{\cal V}_{El} \times {\cal W}_r}(\xi)
=\inf\limits_{x\in [0,0.5]} \left\{ x:\;  \sup_{(v,w^-)\in {\cal V}_{El}\times {\cal W}_r} \sum_{i=-m}^n \pi_i v(\xi_i-x) \leq 0\right\},
\edeqn
where
$v(x-\xi_i)=v^-(x-\xi_i)$ for $i=-m,\cdots,-1$,
$v(x-\xi_i)=v^+(x-\xi_i)$ for $i=0,\cdots,n$,
\bgeq
\pi_i &:=&\left\{ 
\begin{array}{ll}
 \displaystyle 
 w^-\left(\frac{i+m+1}{10}\right)-w^-\left(\frac{i+m}{10}\right)    & \inmat{for} \; 
i=-m,\cdots,-1
\\
  \displaystyle 
w^+\left(\frac{n-i+1}{10}\right)-w^+\left(\frac{n-i}{10}\right)    & \inmat{for} \; 
i=0,1,\cdots, n,
\end{array}
\right.
\edeq
and $m+n+1=10$,
 ${\cal V}_{El}={\cal V}_{ce}^K\cap {\cal V}_{pair}^{M}\cap {\cal V}_S$ and ${\cal W}_{r}=B(w_0,r)$. In this experiment, we assume the shape of the true value function is known, that is, it is S-shaped with 
 $v(0)=0$. Based on this information and the concrete positive values of 
 $\xi_i$ in the support set $\Xi$, we deduce that
$\rho_{(v,w^-,w^+)}(\xi)\in [0,0.5]$ for any value function and weighting functions.
This explains why we write the feasible set of $x$ as $[0,0.5]$ instead of $\R$ in problem~(\ref{eq:robust-V-W-computation-SAA-experiment}).

In the next subsections, 
we will describe in detail how the ambiguity sets ${\cal W}_{r}$ and ${\cal V}_{El}$
are constructed.

\subsection{Construction of ${\cal W}_r$}

As discussed earlier, we use a ball centered at a nominal weighting function to define
the ambiguity set ${\cal W}_r$ under the pseudo-metric,
 see Definition \ref{defi-Ball-N}.
 We take a specific $\tilde g(t)=1$ in the test, i.e., ${\cal W}_r := B_{L_1}(w_0,r)$, see (\ref{ambiguit_W2}).
 It follows by Theorem \ref{thm:reformulation-constraint-function-VW} that $$
 \sup_{(v,w^-)\in {\cal V}_{El}\times {\cal W}_r} \sum_{i=-m}^n \pi_i v(\xi_i-x)=\sup_{(v,w^-)\in {\cal V}_{El}\times \widetilde B_{L_1}(w_0,r)}\sum_{i=-m}^n \pi_i v(\xi_i-x),
 $$
 where
\bgeqn
\label{example_W}
\widetilde B_{L_1}(w_0,r)
=\left\{
w \in \mathscr W_T\;:
\begin{array}{l}
 \mbox{$w$ is increasing and inverse-$S$ shaped}\\
 w(0)=0, w(1)=1, \dd_1(w,w_0)\leq r
\end{array}
\right\},
\edeqn
where $\dd_1(w,w_0):=\int_{0}^{1}|\psi(t)-\psi^0(t)|dt$.
Define the following set of derivatives of $w\in{\widetilde B}_{L_1}(w_0,r)$,
\bgeqn
\label{ambiguity_pi11}
\widetilde \Psi_{L_1}(\psi^0,r)
:=\left\{\psi\in \Psi:
w\in {\widetilde B}_{L_1}(w_0,r)
\right\}
=\left \{\psi\in \Psi_T:
\sum_{l=1}^T|\psi_l-\psi^0_{l}|(t_{l+1}-t_l)\leq r
\right\},
\edeqn
where $\Psi$ 
is the set of  derivative functions of the weighting functions in ${\mathscr W}_{inS}$.

\subsection{Construction of ${\cal V}_{El}$}
\label{section-construct-V}
We randomly generate the certainty equivalent questionnaires and pairwise comparison questionnaires.
These questions  may be phrased as follows. 

\begin{itemize}
\item Q1. 
 For each pair of random prospects
$(\gamma_m,\eta_m)$, $m=1,\cdots,M$, which prospect does the DM prefer? As we discussed earlier in Section \ref{sec3:ambiguity_VW},
we represent by $\bbe_{w^-w^+}[v(\gamma_m)]\geq \bbe_{w^-w^+}[v(\eta_m)]$ if random prospect 
$\gamma_m$ is preferred to random prospect $\eta_m$.
Note that in this case, the DM's true weighting functions $w_*^-,w_*^+$
are known.

\item Q2.
What's the smallest amount of cash,
denoted by $a_k^{+}$,
that the DM would decline to commit instead of being exposed to 
the 
random 
prospect $A_k$
and what is the largest amount of cash,
denoted by $a_k^-$,
that DM would be willing to commit instead of being exposed to the random prospect
$A_k$ for $k=1,\cdots,K$.
\end{itemize}
 It remains to explain how the random prospects are generated. 
For Q1, we use the basic idea of the utility split approach
(see e.g. \cite{ArD15,DGX18}) 
to generate pairs of random prospects
$(\gamma_m,\eta_m)$, $m=1,\cdots,M$.
First, we explain how to generate random prospects.
Let
\bgeq
Z_1=\left\{
\begin{array}{ll}
r_1 & \mbox{with probability } 1-p,\\
r_3 & \mbox{with probability } p,
\end{array}
\right.
\mbox{and }
Z_2=r_2,
\edeq
where $r_1$ and $r_3$ are generated randomly over $[-0.5,0.5]$ with 
$r_1<r_3$, let $r_2=(r_1+r_3)/2$. Next, we need to set an appropriate value for $p$. 
Observe that 
\bgeq
\bbe_{w_*^-w_*^+}[v(Z_1)] =
(1-w_*^+(p))v(r_1)
+w_*^+(p)v(r_3)
\quad \text{and} \quad
\bbe_{w_*^-w_*^+}[v(Z_2)] =
v\left(\frac{r_1+r_3}{2}\right).
\edeq
Note that here the true weighting functions are known whereas 
the value function $v$ is unknown. Consider the case
\bgeqn
\label{eq:Z1-Z2-compaire}
(1-w_*^+(p))v(r_1)
+w_*^+(p)v(r_3) <
v\left(\frac{r_1+r_3}{2}\right).
\edeqn
Let
$\tilde{v}(x) = \frac{v(x)-v(r_1)}{v(r_3)-v(r_1)}$ for $x\in [r_1,r_3]$. 
Then  $\tilde v(r_1)=0$, $\tilde v(r_3)=1$ and consequently
(\ref{eq:Z1-Z2-compaire}) can be written as  
\bgeqn
\label{eq:tildev-Z1-Z2}
w_*^+(p)<
\tilde v\left(\frac{r_1+r_3}{2}\right).
\edeqn
In other words, the characterization of the value function 
$v$ is down to the characterization of the normalized value function $\tilde{v}$
at point $r_2$.
Let ${\cal V}^k_{pair}$ denote the set of value functions which are consistent to pairwise comparison questionnaires that we have already generated. 
Define 
 \bgeqn
 I_1:=\min_{v\in {\cal V}^k_{pair}\cap {\cal V}_S}\tilde v(r_2)
 \quad \text{and} \quad I_2:=\max_{v\in {\cal V}^k_{pair}\cap {\cal V}_S}\tilde v(r_2).
\edeqn
Since $\tilde{v}(r_2)\in [0,1]$, then $I_1, I_2\in [0,1]$.  We set the value of $p$ such that  
\bgeqn
w_*^+(p)=\frac{I_1+I_2}{2},
\edeqn
    and then 
use the true value function $v_*$
(acting as the DM)
to check  whether
  $\bbe_{w_*^-w_*^+}[\tilde v_*(Z_1)]<\bbe_{w_*^-w_*^+}[\tilde v_*(Z_2)]$ or not,
  that is,
   $w_*^+(p)<\tilde v_*(r_2)$ or not.
   Let
\bgeq
    \phi_{k+1}(y)
    &=&
     - \Big(w_*^-((1-p_{k+1})\mathds{1}_{y\geq r_1}+p_{k+1}\mathds{1}_{y\geq r_3})-w_*^-(\mathds{1}_{y\geq r_2})\Big) \nonumber \\
    &=& 
-\mathds{1}_{y\geq r_3}+\mathds{1}_{y\geq r_2 } - (1-w_*^+(p_{k+1}))\mathds{1}_{y\geq r_1 \& y< r_3},
\edeq
and $S$ be the set of all $r_1,r_2,r_3$ generated from $M$ questionnaires and points $-0.5$, $0.5$, $0$, and $\widetilde M=3M+3$. 
The following algorithm describes the procedures  for constructing ${\cal V}^M_{pair}\cap {\cal V}_S$.

\begin{algorithm}
\label{algorithm-pairwise-comparison}
Initialization: set $k :=0$ and the set of questionnaires is empty.

\begin{itemize}

\item[Step 1.] Choose $r_1$ and $r_3$ 
randomly from $[-0.5,0.5]$ 
with equal probability with $r_1<r_3$ 
and then set
    $r_2 :=\frac{r_1+r_3}{2}$.
    
\item[Step 2.] 
     Let $j_0 :=\max\{j\in \{1,\cdots,\widetilde M\}: t_j= 0\}$ and solve the following problem:
 \begin{subequations}
 \label{problem:generate-p}
\begin{align}
    \min\limits_{a,b}~&
\left(    (a_{1}r_2+b_{1}
   )\mathds{1}_{[t_1,t_2]}(r_2)+\sum_{j=2}^{\widetilde M-1}(a_{j}r_2+b_{j})
   \mathds{1}_{(t_j,t_{j+1}]}(r_2)+1.5\cdot(0.5)^{0.2}\right)/((0.5)^{1/3}+1.5\cdot(0.5)^{0.2})\\
   {\rm s.t.}~& a_{j-1}y_j+b_{j-1}=a_jy_j+b_j, \forall j=2,\cdots,\widetilde M-1,\\
  & a_1r_1+b_1=-1.5\cdot(0.5)^{0.2},\\
  & a_{\widetilde M-1}r_3+b_{\widetilde M-1}=(0.5)^{1/3},\\
  & \sum_{j=1}^{\widetilde M-1}a_{j} \int_{y_j}^{y_{j+1}}\phi_l(y) dy \leq 0, l=1,\cdots, k,\\
  & a_j\leq a_{j+1}, j=1, \cdots, j_{0}-1,\\
  & a_{j+1}\leq a_j, j=j_{0},\cdots, \widetilde M-2,\\
  & a_j\geq 0, j=1,\cdots, \widetilde M-1.
\end{align}
\end{subequations}
Let $I_1$ be the optimal value of the program. 
Replace the ``$\min$'' with ``$\max$'' and solve the maximization problem. 
Let $I_2$ be the optimal value of 
the maximization problem.
   Let 
   $p_{k+1}$ be such that
   $w_*^+(p_{k+1})=\frac{I_1+I_2}{2}$,
    and then use the true value function $v_*$ to check whether
\bgeqn
\bbe_{w_*^-w_*^+}[\tilde  v_*(Z_2)]>\bbe_{w_*^-w_*^+}[\tilde v_*(Z_1)]
\edeqn
or not,
  that is,
  $\tilde v_*(r_2)> w_*^+(p_{k+1})$ or not.
   If $\tilde v_*(r_2)> w_*^+(p_{k+1})$,
    then $Z_2$ is preferred over $Z_1$.
   Let
     \bgeqn
     \label{eq:defi_phim}
    \phi_{k+1}(y)=\left\{\begin{array}{cl}
    -1+w_*^+(p_{k+1})& \inmat{ if } r_1\leq y<r_2,\\
    w_*^+(p_{k+1}) & \inmat{ if } r_2\leq y<r_3,\\
    0 & \inmat{otherwise}.\\
    \end{array}
    \right.
    \edeqn
 Add the inequality  $\int_{-0.5}^{0.5} \phi_{k+1}(y) dv(y)\leq 0$ to the constraints of problem (\ref{problem:generate-p}).
    Otherwise,
   $\tilde v(r_2)\leq  w_*^+(p_{k+1})$,
      $Z_1$ is preferred over $Z_2$
    and add   inequality  $\int_{-0.5}^{0.5} -\phi_{k+1}(y) dv(y)\leq 0$ to the constraints of problem (\ref{problem:generate-p}).\\
\item[Step 3.]
Set $k:=k+1$ and 
return to Step 1.
\end{itemize}
\end{algorithm} 
For Q2,
we let
\bgeq
A_k :=\left\{
\begin{array}{ll}
r_k &\inmat{ with probability } p_k,\\
0 & \inmat{ with probability } 1-p_k,
\end{array}
\right.
\edeq
where $r_k$ is randomly 
generated 
with uniform distribution over $[0,0.5]$ 
and $p_k$ is also randomly generated
with the uniform distribution over $\{0.1,0.2,\cdots,0.9\}$.
The randomness 
will enable us to pick up any value in the domain 
of the value function and with any likelihood. 
The upper and lowers bounds $a_k^{+},a_k^{-}$ 
are set as follows:
\bgeqn
a_k^{+}=(1+\tau)\rho_{(v_*,w_*^{-},w_*^{+})}(A_k),\;\;
a_k^{-}=(1-\tau)\rho_{(v_*,w_*^{-},w_*^{+})}(A_k),
\label{eq:a_k+-}
\edeqn
where $\rho_{(v_*,w_*^{-},w_*^{+})}(A_k)$ is the true
GSR-CPT value of $A_k$, i.e., 
\bgeq
\rho_{(v_*,w_*^{-},w_*^{+})}(A_k):=
\inf\limits_{x\in [0,r_k]}  \left\{x: (r_k-x)^{1/3}w_*^{+}(p_k) 
-1.5x^{0.2}(1-w_*^+(p_k) 
\leq 0 \right\},
\edeq
and 
$\tau$ is randomly generated with a uniform distribution over $[0,0.05]$.

The following algorithm describes the procedures for constructing ${\cal V}^k_{ce}$.
\begin{algorithm}
Initialization: set $k :=0$ and  the set of questionnaires is empty. 
\begin{itemize}
 \item[Step1.]
Choose randomly $r_k\in [0,0.5]$ and $p_k\in \{0.1,0.2,\cdots,0.9\}$.
			
\item[Step2.] Compute $a^-_k,a^+_k$ via formulae (\ref{eq:a_k+-}) and construct
		    $${\cal V}_{ce}^k=\{v\in {\mathscr V}: \bbe_{w^-_*w^+_*}[v(A_i-a^+_i)]\leq 0, \bbe_{w^-_*w^+_*}[v(A_i-a^-_i)]\geq 0,i=1,\cdots,k\}.
		    $$
\item[Step3.]
	      If problem $\rho_{{\cal V}_{pair}^{M}\cap {\cal V}_{ce}^i\times {\cal W}_r}(\xi)$ in (\ref{eq:robust-V-W-computation-SAA-experiment}) is feasible, then set $k=k+1$. The loop goes to step 1.
\end{itemize}
\end{algorithm}

\subsection{Numerical experiments}

We consider Problem (\ref{eq:robust-V-W-computation-SAA-experiment}) and use
Algorithm~\ref{algorithm:thm4-1}
to solve it. Since $\tilde g(t)=1$,
we can rewrite (\ref{eq:two-stage-reformulate-tildephi}) as
\bgeqn
\label{eq:two-stage-reformulate-experiment}
 h(x)=
 && \inf_{\tilde a^-,\tilde b^-,\tilde \psi^-} \left\{\sum_{i=-m}^{n} \sum_{l=1}^T
\Big( (\tilde a_1^{l-}(\xi_i-x)+\tilde b_1^{l-})\mathds{1}_{[y_1,y_2]}(\xi_i-x) \right. \nonumber \\
&& \qquad \qquad \qquad \qquad \left. +\sum_{j=2}^{J-1}(\tilde a_j^{l-}(\xi_i-x)+\tilde b_j^{l-})\mathds{1}_{(y_j,y_{j+1}]}(\xi_i-x) \Big)
\int_{q_{i-1}}^{q_{i}}\mathds{1}_{[t_l,t_{l+1})}(t)dt\right\} \nonumber \\
&&\qquad {\rm s.t. } ~~\qquad (\ref{robust_Wra}), (\ref{robust_Wrc}),  (\ref{robust_Wre}), \nonumber \\
&&\qquad \qquad \qquad 
\sum_{l=1}^T|\psi_l^--\psi^0_{l}|(t_{l+1}-t_l)\leq r,
\nonumber \\
&& \qquad \qquad \qquad \psi_l^->0, l=1,\cdots, T, \nonumber \\
&& \qquad \qquad \qquad \sum_{j=1}^{J-1}  \tilde a_j^{l-} \int_{y_j}^{y_{j+1}} \phi_m(y)dy\leq 0, m=1,\cdots, M,\nonumber \\
&& \qquad \qquad \qquad 
(\ref{robust_v_ob-LP}),(\ref{robust_v_oc-LP}),(\ref{robust_v_od-LP}),(\ref{robust_v_oe-LP}),(\ref{robust_v_of-LP}),(\ref{robust_v_og-LP}),(\ref{robust_v_oh-LP}),(\ref{robust_v_oj-LP}), \nonumber \\
&&  \qquad \qquad \qquad  a_j^{l-}>0, a_j^{l+}>0, j=1,\cdots,J, l=1,\cdots,T,
\edeqn
 where $\tilde \psi^-:=(\psi_1^-,\cdots,\psi_T^-)$, 
 $\tilde \psi^0:=(\psi^0_{1},\cdots,\psi^0_{T})$,
  $q_k^{0}=0$,
 $q_k^{1}=1-p_k$,
$q_k^{2}=1$, 
$A_k^1=0,A_k^2=r_k$,
\bgeq
\hat c^{l-}_k&=& \sum_{i=1}^{2} 
\Big( (\tilde a_1^{l-}(A_k^i-a_k^{+})+\tilde b_1^{l-})\mathds{1}_{[y_1,y_2]}(A_k^i-a_k^{+}) \\
&& \quad~ +\sum_{j=2}^{J-1}(\tilde a_j^{l-}(A_k^i-a_k^{+})+\tilde b_j^{l-})\mathds{1}_{(y_j,y_{j+1}]}(A_k^i-a_k^{+}) \Big)
\left(w_*^-\left(q^i_k\right)-w_*^-\left(q^{i-1}_k\right)\right), 
\edeq
\bgeq
\tilde c^{l-}_k &=&\sum_{i=1}^{2} 
\Big( (\tilde a_1^{l-}(A_k^i-a_k^{-})+\tilde b_1^{l-})\mathds{1}_{[y_1,y_2]}(A_k^i-a_k^{-}) \\
&& \quad~ +\sum_{j=2}^{J-1}(\tilde a_j^{l-}(A_k^i-a_k^{-})+\tilde b_j^{l-})\mathds{1}_{(y_j,y_{j+1}]}(A_k^i-a_k^{-}) \Big)
\left(w_*^-\left(q^i_k\right)-w_*^-\left(q^{i-1}_k\right)\right).
\edeq
In this experiment,
we examine how increase of the
number of the pairwise comparisons $M$ affects the worst 
case value function.
We also investigate the change of the worst case weighting function as the radius of the $\tilde g$-ball is driven to zero. 
The breakpoints of the piecewise linear weighting function 
are chosen with  $t_i=0.1i$ for $i=1,\cdots,10$ and $T=9$.
The cdf function of $\xi$ 
takes a value of $q_k=k/10$ at $\xi_k$ for  $k=1,\cdots,10$ with $q_0=0$, see (\ref{eq:Xi-test}) for the definition of support set of $\xi$. 
Note that the optimal value of the true problem is $0.2044$.

\noindent\underline{\bf Impact of the pairwise comparisons and certainty equivalents}

Figure \ref{fig:experiment-value-function} (a) depicts 
changes of the worst-case value functions
when the number of pairwise comparisons $M$ increases
from $5$ to $20,40$ and $100$. The number of certainty equivalents $K$ is fixed at $5$. The radius of the $\tilde g$-ball is also fixed at $r=0.01$.
From the figure,
we can see the worst-case value function 
calculated with each $M\in \{5,20,40,100\}$
converges gradually 
to the true one 
$M$ increases.
This is because
more and more 
information of the DM's preference 
CPT 
is elicited via Algorithm~\ref{algorithm-pairwise-comparison}
and the ambiguity set 
${\cal V}^M_{pair}\cap {\cal V}_S$
shrinks. 
Figure \ref{fig:experiment-value-function} (b) depicts 
changes of the optimal values of problem (\ref{eq:robust-V-W-computation-SAA-experiment}) for $M\in \{5,20,40,60,80,100\}$

Figure \ref{fig:experiment-value-function_CE} (a) depicts 
changes of the worst-case value functions
when the number of certainty equivalents $K$ increases
from $5$ to $20,40$ and $100$. The number of pairwise comparisons $M$ is fixed at $5$. The radius of the $\tilde g$-ball is also fixed at $r=0.01$.
From the figure,
we can see the worst-case value function 
calculated with each $K\in \{5,20,40,100\}$
converges gradually 
to the true one 
as
$K$ increases.
This is because
more and more 
information of the DM's preference 
CPT 
is elicited via Algorithm~\ref{algorithm-pairwise-comparison}
and the ambiguity set 
${\cal V}^K_{ce}\cap {\cal V}_S$
shrinks. 
Figure \ref{fig:experiment-value-function} (b) depicts 
changes of the optimal values of problem (\ref{eq:robust-V-W-computation-SAA-experiment}) for $K\in \{5,20,40,60,80,100\}$.

\begin{figure}[!htbp]
\minipage{0.5\textwidth}
 \centering
  \includegraphics[width=\linewidth]{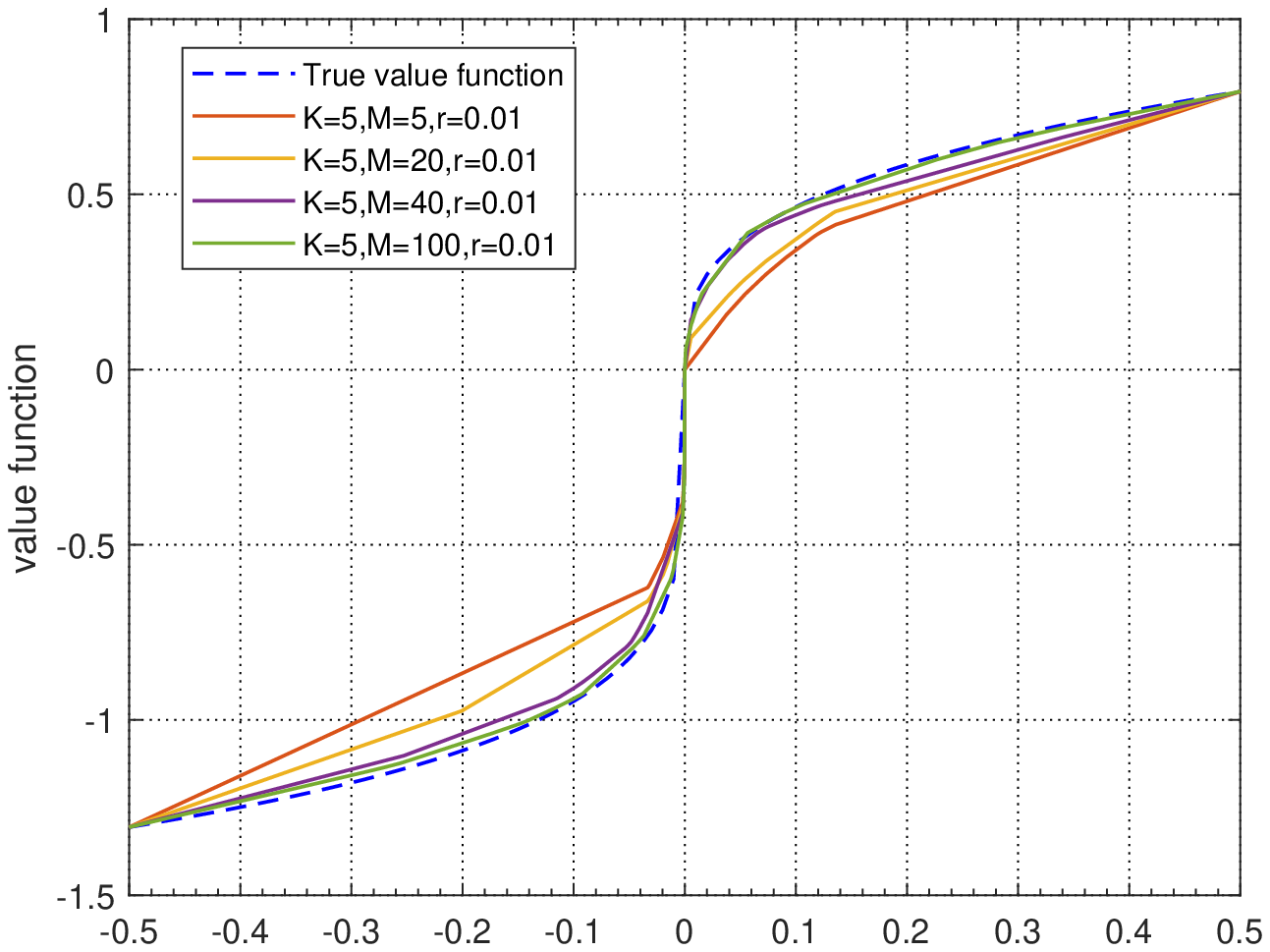}
  \textbf{(a)}
\endminipage\hfill
\minipage{0.5\textwidth}
  \centering
  \includegraphics[width=\linewidth]{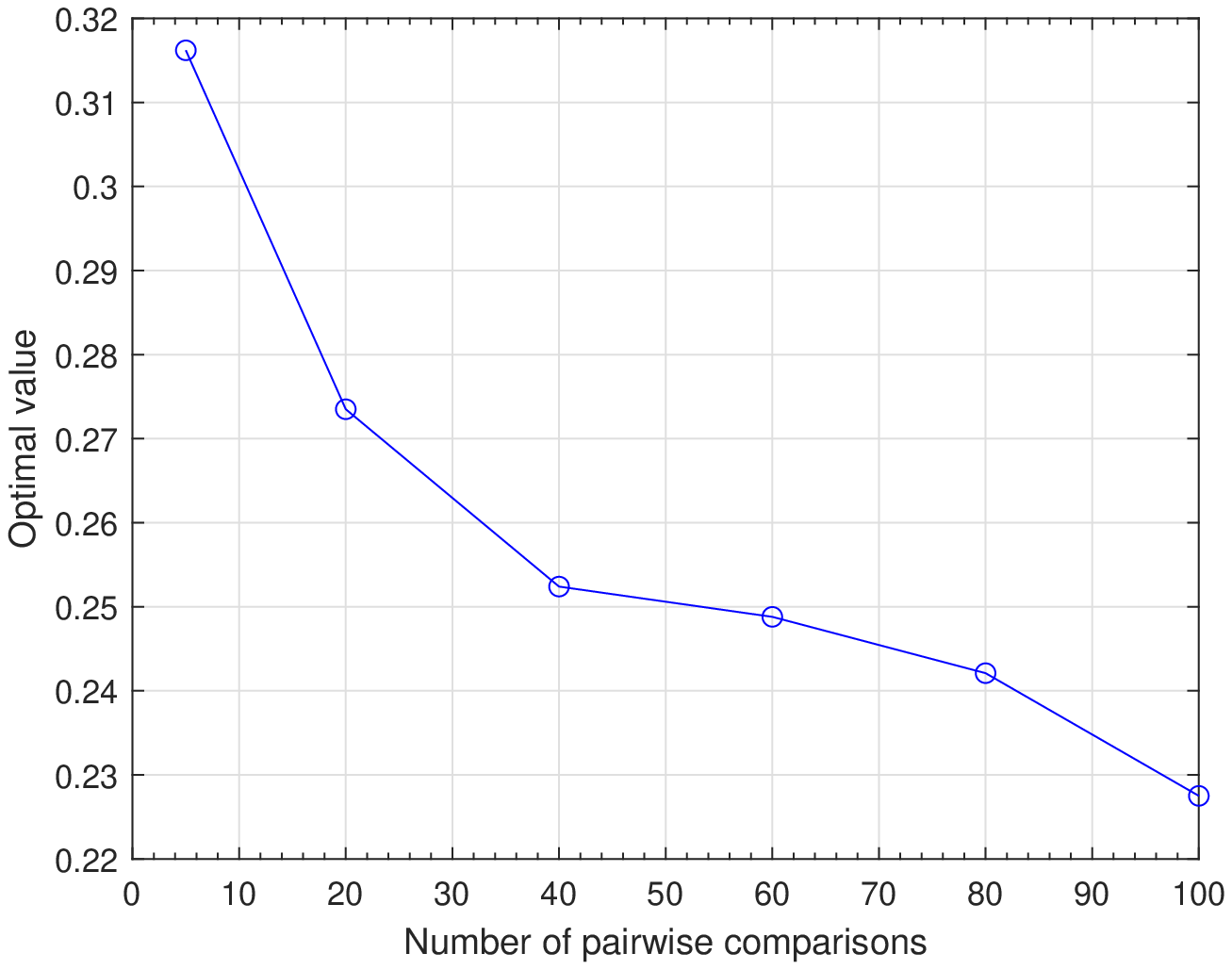}
  \textbf{(b)}
\endminipage
\caption{\small \textbf{(a)} Worst case value function vs 
increase of $M$.
\quad \textbf{(b)} 
Change of $\rho_{{\cal V}_{El}\times {\cal W}_r}(\xi)$ in problem (\ref{eq:robust-V-W-computation-SAA-experiment})
vs increase of $M$.}
\label{fig:experiment-value-function}
\end{figure}

\begin{figure}[!htbp]
\minipage{0.5\textwidth}
 \centering
  \includegraphics[width=\linewidth]{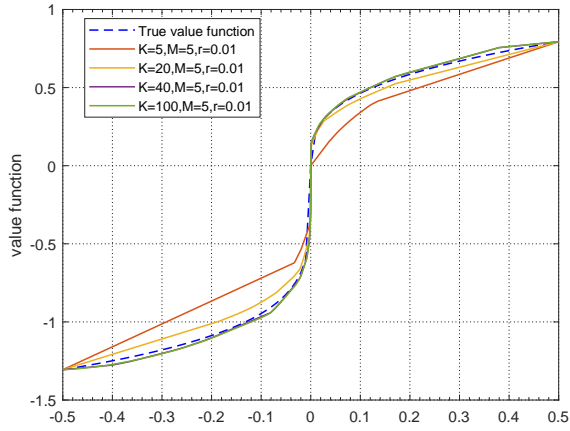}
  \textbf{(a)}
\endminipage\hfill
\minipage{0.5\textwidth}
  \centering
  \includegraphics[width=\linewidth]{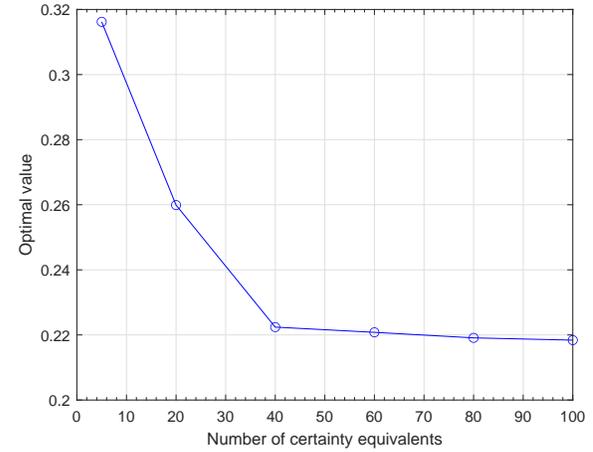}
  \textbf{(b)}
\endminipage
\caption{\small \textbf{(a)}Worst case value function vs 
increase of $K$.
\quad \textbf{(b)} 
Change of $\rho_{{\cal V}_{El}\times {\cal W}_r}(\xi)$ in problem (\ref{eq:robust-V-W-computation-SAA-experiment})
vs increase of $K$.}
\label{fig:experiment-value-function_CE}
\end{figure}

\noindent\underline{\bf Impact of ambiguity of the weighting functions}

Figure \ref{fig:experiment-weighting-function} (a) depicts changes of the worst-case weighting function w.r.t. decrease
of the radius while the number of pairwise comparisons is fixed at $M=5$ and the number of certainty equivalents is fixed at $K=5$.
From the figure,
we can see the convergence of the worst-case weighting functions to the true 
as $r$ reduces to $0$.

Figure \ref{fig:experiment-weighting-function} (b) displays the optimal values
of PRGSR-CPT for the ambiguity sets $({\cal V}_{El},{\cal W}_r)$.
We can see that for the fixed number of pairwise comparisons,
 the PRGSR-CPT becomes smaller as the radius of ambiguity set decreases.

\begin{figure}[!htbp]
\minipage{0.5\textwidth}
 \centering
  \includegraphics[width=\linewidth]{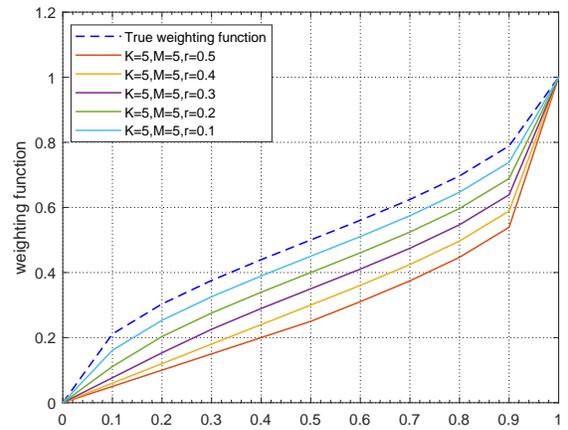}
  \textbf{(a)}
\endminipage\hfill
\minipage{0.5\textwidth}
  \centering
  \includegraphics[width=\linewidth]{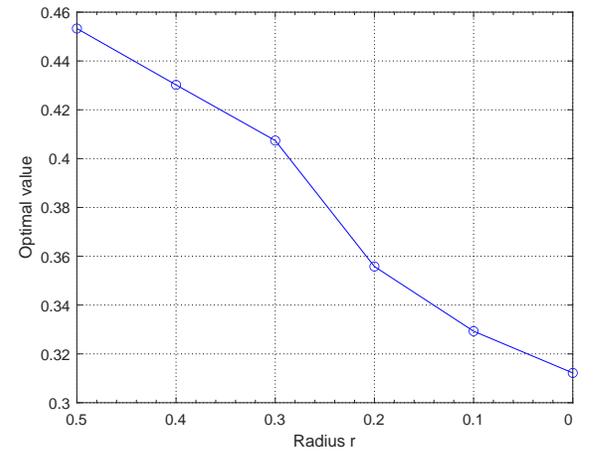}
  \textbf{(b)}
\endminipage
\caption{\small \textbf{(a)}
Worst case weighting function vs
decrease of $r$.
\quad \textbf{(b)} 
Change of $\rho_{{\cal V}_{El}\times {\cal W}_r}(\xi)$ in problem (\ref{eq:robust-V-W-computation-SAA-experiment})
vs decrease of $r$.}
\label{fig:experiment-weighting-function}
\end{figure}

The test results are consistent with our expectation that 
as more and more information about the DM's preferences is obtained, 
the ambiguity of the true value function and weighting function reduces, the preference robust
generalized shortfall risk converges to the true GSR-CPT.

\section{Concluding remarks}

In this paper, we explore Mao and Cai's generalized shortfall risk measure 
when the value function and/or the weighting function are ambiguous. We introduce the concept of 
preference robust generalized shortfall risk measure which is based on a pair of the worst value function and 
weighting function from a joint  ambiguity set value/weighting functions, investigate the properties of the robust generalized shortfall risk measure and develop computational algorithms for calculating the robust risk.
The numerical tests show that the algorithms perform well w.r.t. increase of information about the DM's risk preferences.

There are some limitations in the developed models and preference elicitation approaches. One  is that the ambiguity set of the weighting functions is constructed through a ball centered at a nominal weighting function, it might be interesting to combine it with the pairwise comparison approach to reduce the ambiguity. This approach is expected to work well for each fixed value function but it seems to be challenging to elicit both the value function and the weighting function simultaneously. The other is that we are unable to use the CPT-based shortfall risk measure to represent the DM’s
preference choice, rather we use the distorted expected values of prospects, that is, $\bbe[w_-w_+[v(\cdot)]$ to
describe the DM’s pairwise choice. As discussed in Section 3.2, there could be a gap between the two approaches. 
Note also that in the numerical tests, instances of pairwise comparisons and certainty equivalents 
are generated randomly without any costs. In practice, eliciting a DM's preferences in this manner may 
incur financial costs, see Vayanos et al.~\cite{VYMDR20}.  
Thus it might be interesting to explore the idea as to how to optimally design questionnaires to 
elicit the true value/weighting functions  using approaches such as \cite{BeO13}. 
We leave all these for future research.

\vspace{-8pt}

\appendix
  
 \renewcommand\thesection{Appendix~\Alph{section}}

\setcounter{section}{0}
\section{Supplementary materials}

\renewcommand\thesection{\Alph{section}}
\subsection{Properties of preference functional $\bbe_{w^-w^+}[v(\cdot)]$}
\begin{lemma}
\label{lemma:monotone-of-bbe-xieta}
Let $\bbe_{w^-w^+}[v(\zeta)]$ be defined as in (\ref{eq:defi-bbe-wwv}). Then
    \bgeqn
  \bbe_{w^-w^+}[v(\eta)]\leq \bbe_{w^-w^+}[v(\xi)]\,\, \forall \eta,\xi \in {\cal L}^0(\R) \inmat{ with } \eta(\omega)\leq \xi(\omega), \forall \omega \in \Omega.
    \edeqn
\end{lemma}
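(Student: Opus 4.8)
The plan is to reduce the claim to the separate monotonicity of the two distortion integrals appearing in the definition (\ref{eq:defi-bbe-wwv}). First I would exploit that every value function $v\in\mathscr V$ is strictly increasing, so that the pointwise hypothesis $\eta(\omega)\leq\xi(\omega)$ for all $\omega\in\Omega$ immediately transfers to the transformed variables: $v(\eta(\omega))\leq v(\xi(\omega))$ for all $\omega$. Setting $X:=v(\eta)$ and $Y:=v(\xi)$, the goal becomes to show that $\int_{0}^{\infty}w^+(\mathbb P(Y\geq t))\,dt-\int_{-\infty}^{0}w^-(\mathbb P(Y\leq t))\,dt$ is at least as large as the corresponding expression built from $X$.

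For the gain integral I would argue pointwise in $t$: for each $t\geq 0$ the event inclusion $\{X\geq t\}\subseteq\{Y\geq t\}$ holds because $X\leq Y$ everywhere, whence $\mathbb P(X\geq t)\leq\mathbb P(Y\geq t)$; since $w^+$ is increasing this gives $w^+(\mathbb P(X\geq t))\leq w^+(\mathbb P(Y\geq t))$, and integrating over $t\in[0,\infty)$ preserves the inequality. For the loss integral I would use the \emph{reverse} inclusion $\{Y\leq t\}\subseteq\{X\leq t\}$, which is valid for every $t<0$, so that $\mathbb P(Y\leq t)\leq\mathbb P(X\leq t)$; monotonicity of $w^-$ then yields $w^-(\mathbb P(Y\leq t))\leq w^-(\mathbb P(X\leq t))$, and after integration the leading minus sign in front of this term reverses the direction, making the $Y$-contribution the larger of the two.

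Adding the two estimates produces $\bbe_{w^-w^+}[v(\eta)]\leq\bbe_{w^-w^+}[v(\xi)]$, which is the assertion. The only delicate point is well-definedness of the two improper integrals, and this is already guaranteed by the blanket assumption that the distorted expectation in (\ref{eq:defi-bbe-wwv}) is well defined on ${\cal L}^0(\R)$; granting that, each monotonicity step is an elementary consequence of event inclusion together with the monotonicity of the weighting functions, so I do not anticipate any substantive obstacle. The one place where a sign error could creep in is the loss integral, where the inclusion flips direction and the outer minus sign flips it back; I would therefore write both inclusion directions out explicitly to keep the bookkeeping transparent.
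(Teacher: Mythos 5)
Your argument is correct and is essentially the same as the paper's: both reduce the claim to the pointwise inequality $v(\eta)\leq v(\xi)$, then compare the gain and loss distortion integrals termwise using the monotonicity of $w^+$ and $w^-$ applied to the tail probabilities. The only cosmetic difference is that the paper rewrites the loss integral over $[0,\infty)$ via $-v(\xi)$ whereas you keep it over $(-\infty,0)$ and track the sign flip directly; the bookkeeping is equivalent.
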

\begin{proof} 
By definition (see (\ref{eq:defi-bbe-wwv})),
\bgeq
 \bbe_{w^-w^+}[v(\xi)]
 =\int_{0}^{\infty}w^+(\mathbb P(v(\xi)>t))dt
 +\int_{0}^{\infty} -w^-(\mathbb P(-v(\xi)>t))dt.
\edeq
Since
$\eta(\omega)\leq \xi(\omega)$ for all $\omega\in \Omega$ and $v(\cdot)$ is monotonically increasing, then
$v(\eta(\omega))\leq v(\xi(\omega))$ for all $\omega\in \Omega$,
and consequently 
\bgeq
\mathbb P(v(\eta)>t)\leq \mathbb P(v(\xi)>t) \quad \inmat{and} \quad \mathbb P(-v(\eta)>t)\geq \mathbb P(-v(\xi)>t).
\edeq
Moreover, since $w^-,w^+$ are strictly increasing,
we have 
\bgeq
w^+(\mathbb P(v(\eta)>t))\leq w^+(\mathbb P(v(\xi)>t))\quad \text{and}\quad -w^-(\mathbb P(-v(\eta)>t))\leq -w^-(\mathbb P(-v(\xi)>t)).
\edeq
A combination of the inequalities give rise to 
$$
\bbe_{w^-w^+}[v(\eta)]\leq \bbe_{w^-w^+}[v(\xi)].
$$
The proof is complete.
\hfill $\Box$
\end{proof}

\begin{lemma}
\label{lemma:monotone-of-bbewv}
Let $\bbe_{w^-w^+}[v(\cdot)]$ be defined as in (\ref{eq:defi-bbe-wwv}) and $\xi\in {\cal L}^0(\R)$.
Then the following assertions hold.
\begin{itemize}
    \item[(i)] 
    For any 
    $a, b\in \R$ with $a\leq b$,
    \bgeqn
  \bbe_{w^-w^+}[v(\xi-a)]\geq \bbe_{w^-w^+}[v(\xi-b)].
    \edeqn
    \item[(ii)]
   For any 
    $a, b\in \R$ with $a< b$,
      \bgeqn
    \bbe_{w^-w^+}[v(\xi-a)]>
    \bbe_{w^-w^+}[v(\xi-b)]\,\, \forall a < b.
    \edeqn
 \end{itemize}
\end{lemma}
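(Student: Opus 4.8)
Since $a\le b$ gives $\xi-a\ge \xi-b$ pointwise on $\Omega$, assertion~(i) is immediate from Lemma~\ref{lemma:monotone-of-bbe-xieta}: monotonicity of $v$ yields $v(\xi-a)\ge v(\xi-b)$ pointwise, and applying that lemma with $\eta:=\xi-b$ and the random variable $\xi-a$ in place of $\xi$ gives $\bbe_{w^-w^+}[v(\xi-a)]\ge \bbe_{w^-w^+}[v(\xi-b)]$. So the whole content of the lemma beyond the trivial weak bound is the strict inequality in~(ii).

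\textbf{Part (ii), reduction.} Write $\zeta_1:=\xi-a$ and $\zeta_2:=\xi-b$; since $a<b$ and $v$ is \emph{strictly} increasing, $v(\zeta_1)>v(\zeta_2)$ pointwise. Using the survival-function representation employed in the proof of Lemma~\ref{lemma:monotone-of-bbe-xieta}, I would express the difference as
\[
\bbe_{w^-w^+}[v(\zeta_1)]-\bbe_{w^-w^+}[v(\zeta_2)]
=\int_0^\infty\!\big[w^+(G_1(t))-w^+(G_2(t))\big]\,dt
+\int_0^\infty\!\big[w^-(H_2(t))-w^-(H_1(t))\big]\,dt,
\]
where $G_i(t):=\mathbb P(v(\zeta_i)>t)$ and $H_i(t):=\mathbb P(-v(\zeta_i)>t)$. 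From $v(\zeta_1)>v(\zeta_2)$ one reads off $G_1\ge G_2$ and $H_1\le H_2$, so with $w^-,w^+$ increasing both integrands are nonnegative; this re-derives (i) and reduces the task to showing the displayed sum is \emph{strictly} positive. I would carry this out under the paper's blanket assumption that the distorted expectations are finite, so that the difference of integrals is legitimately split into the two nonnegative pieces above.

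\textbf{The crux.} The only delicate point is ruling out equality. Suppose the displayed difference were $0$. Then both nonnegative integrands vanish for a.e.\ $t\ge 0$, and since $w^+$ and $w^-$ are strictly increasing this forces $G_1(t)=G_2(t)$ and $H_1(t)=H_2(t)$ for a.e.\ $t\ge0$. Because survival functions are right-continuous, equality a.e.\ upgrades to equality everywhere on $[0,\infty)$, which pins down the law of $v(\zeta_i)$ on both half-lines and hence in full; thus $v(\zeta_1)$ and $v(\zeta_2)$ share the same distribution. As $v$ is strictly increasing, hence injective, this yields $\zeta_1\overset{d}{=}\zeta_2$, i.e.\ $\xi-a\overset{d}{=}\xi-b$. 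But $\xi-a=(\xi-b)+(b-a)$ with $b-a>0$, so the common law would be invariant under the nonzero translation by $b-a$, which is impossible for a probability measure on $\R$. This contradiction establishes the strict inequality. I expect the translation-invariance argument to be the main obstacle; everything else is routine bookkeeping with the survival representation, and it has the pleasant feature of requiring no extra nondegeneracy hypothesis on $\xi$ (the constant case is covered automatically, since a point mass cannot be translation invariant either).
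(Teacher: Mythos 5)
Your part (i) coincides with the paper's: both reduce it to Lemma~\ref{lemma:monotone-of-bbe-xieta}. For part (ii) you take a genuinely different route. The paper works on the probability space: it splits $\Omega$ at the event $\{\xi\le b\}$, rewrites $\bbe_{w^-w^+}[v(\xi-b)]$ as an integral of $v^{\mp}(\xi(\omega)-b)$ against the distorted densities $\psi^{\mp}$, and obtains strictness directly from the pointwise strict inequalities $v^-(\xi(\omega)-b)<v^-([\xi(\omega)-a]_-)$ and $v^+(\xi(\omega)-b)<v^+([\xi(\omega)-a]_+)$ on whichever of the two events has positive probability. You instead stay with the survival-function representation, note that both bracketed integrands are nonnegative, and rule out equality by contradiction: equality would force $v(\xi-a)\overset{d}{=}v(\xi-b)$, hence (by injectivity of $v$, e.g.\ comparing CDFs via $\{\zeta\le c\}=\{v(\zeta)\le v(c)\}$) $\xi-a\overset{d}{=}\xi-b$, i.e.\ a probability law on $\R$ invariant under the nonzero shift $b-a$, which is impossible. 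Your argument is correct and buys some generality: it never touches the derivatives $\psi^{\pm}$, so it applies to weighting functions that are merely strictly increasing, whereas the paper's computation leans on its standing differentiability assumption; it also needs no case analysis for degenerate $\xi$. The price is that you must use strict monotonicity of $w^{\pm}$ (to pass from $w^{\pm}(G_1)=w^{\pm}(G_2)$ a.e.\ to $G_1=G_2$ a.e.) and finiteness of the four individual integrals to justify the splitting — both of which you correctly flag and which hold under the paper's blanket well-definedness assumption. The one step you should write out rather than gesture at is the recovery of the full law of $v(\zeta_i)$ from $G_i$ and $H_i$ on $[0,\infty)$: right-continuity upgrades a.e.\ equality to everywhere, and the two tails together determine the distribution function at every real point, including at $0$. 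With that spelled out, the proof is complete.
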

\begin{proof} 
Part (i) follows from directly from 
 Lemma~\ref{lemma:monotone-of-bbe-xieta}.

Part(ii). Let 
$\Omega_1:=\{\omega\in \Omega: \xi(\omega)\leq b\}$.
For real number $a\in \R$,
let $(a)_+:=\max\{a,0\}$
and $(a)_-:=\min\{a,0\}$.
For any $a< b$,
we have 
\bgeq
\xi(\omega)-b< [\xi(\omega)-a]_-,  \forall \omega\in \Omega_1 \quad \inmat{and}\quad
\xi(\omega)-b< [\xi(\omega)-a]_+  \forall \omega\in \Omega\backslash{\Omega_1}.
\edeq
Since the value function $v\in \mathscr V$ is strictly increasing,  then
\bgeq
v^-(\xi(\omega-b))< v^-([\xi(\omega-a)]_-),  \forall \omega\in \Omega_1 \quad \inmat{and}\quad
v^+(\xi(\omega)-b)< v^+([\xi(\omega)-a]_+), \forall \omega\in \Omega\backslash{\Omega_1}.
\edeq
Let 
\bgeq
R_{1}:=\int_{\Omega_1} v^-(\xi(\omega)-b)\psi^-(\mathbb P(\{\tilde \omega\in \Omega:\xi(\tilde \omega)\leq \xi(\omega)\})) d \mathbb P(\omega)
\edeq
and 
\bgeq
R_{2}:=\int_{\Omega\backslash{\Omega_1}} v^+(\xi(\omega)-b)\psi^+(1-\mathbb P(\{\tilde \omega\in \Omega:\xi(\tilde \omega)\leq \xi(\omega)\})) d \mathbb P(\omega).
\edeq
By the definition of $\bbe_{(v,w^-,w^+)}[v(\cdot)]$, 
we have 
\bgeq
\bbe_{w^-w^+}[v(\xi-b)]
&=& \int_{\Omega_1} v^-(\xi(\omega)-b)\psi^-(\mathbb P(\{\tilde \omega\in \Omega:\xi(\tilde \omega)\leq \xi(\omega)\})) d \mathbb P(\omega)\\
&&+\int_{\Omega\backslash{\Omega_1}} v^+(\xi(\omega)-b)\psi^+(1-\mathbb P(\{\tilde \omega\in \Omega:\xi(\tilde \omega)\leq \xi(\omega)\})) d \mathbb P(\omega)\\
&=&R_1+R_2.
\edeq
Since we have either $\mathbb{P}(\Omega_1)>0$ or $\mathbb{P}(\Omega\backslash \Omega_1)>0$ or both, then
\bgeq
\bbe_{w^-w^+}[v(\xi-b)]
&< & \int_{\Omega_1} v^-([\xi(\omega)-a]_-)\psi^-(\mathbb P(\{\tilde \omega\in \Omega:\xi(\tilde \omega)\leq \xi(\omega)\})) d \mathbb P(\omega)\\
&&+\int_{\Omega\backslash{\Omega_1}} v^+([\xi(\omega)-a]_+)\psi^+(1-\mathbb P(\{\tilde \omega\in \Omega:\xi(\tilde \omega)\leq \xi(\omega)\})) d \mathbb P(\omega)\\
&=& \int^{b}_{-\infty} v^-([u-a]_-)\psi^-(F_{\xi}(u))dF_{\xi}(u) 
+\int^{\infty}_{b} v^+([u-a]_+)\psi^+(1-F_{\xi}(u))dF_{\xi}(u)  \\
&\leq &\int^{a}_{-\infty}
v^-(u-a)\psi^-(F_{\xi}(u))dF_{\xi}(u) 
+\int^{\infty}_{a} v^+(u-a)\psi^+(1-F_{\xi}(u))dF_{\xi}(u)\\
&=&\bbe_{w^-w^+}[v(\xi-a)].
\edeq
The proof is complete.
\hfill $\Box$
\end{proof}

\subsection{Proof of Theorem \ref{thm:reformulation-constraint-function-VW}}
\label{app:proof-theorem4-1}
\textbf{Proof.} We divide the proof into four steps. 

\underline{ \bf Step 1.}
By definition, 
we have 
$$
\sup_{(v,w^-,w^+)\in {\cal V}_{El}\times {\cal W}_r\times {\cal W}_r} \sum_{i=-m}^n \pi_i v(x-\xi_i)
=\sup_{v\in {\cal V}_{El}}\sup_{(w^-,w^+)\in {\cal W}_r\times {\cal W}_r} \sum_{i=-m}^n \pi_i v(x-\xi_i),
$$
where ${\cal V}_{El}={\cal V}_{ce}^K \cap {\cal V}^M_{pair}\cap {\cal V}_{S}$ and ${\cal W}_r=B(w_0,r)$.
For each fixed $v\in {\cal V}_{El}$, let
$$
\phi(v):=\sup_{(w^-,w^+)\in {\cal W}_r\times {\cal W}_r} \sum_{i=-m}^n \pi_i v(x-\xi_i).
$$
Then 
$\sup_{(v,w^-,w^+)\in {\cal V}_{El}\times {\cal W}_r\times {\cal W}_r} \sum_{i=-m}^n \pi_i v(x-\xi_i)=\sup_{v\in {\cal V}_{El}} \phi(v)$.
It suffices to derive the tractable reformulation of $\phi(v)$ for 
each $v\in {\cal V}_{El}$.

\underline{\bf Step 2.}
Let  $q_{-m-1} =0,$ $q_i =\sum_{j=-m}^{i}p_j$ for $i=-m,\cdots,n$.
We choose the set of breakpoints of the piecewise linear weighting functions such that it contains $q_i,1-q_i$, i.e.,
\bgeqn
\label{eq:breakpoints-q-t}
\{q_i: i=-m-1,\cdots,-1\}\cup \{1-q_i: -1,\cdots,n\} \subset \{t_1,\cdots,t_{T+1}\}. 
\edeqn
Define the 
$\tilde g$-ball of piecewise linear inverse 
$S$-shaped  weighting functions

centered at $w_0$ with radius $r$
\bgeqn
\label{ambiguity_W1}
{\widetilde B}(w_0,r):=\left\{w \in \mathscr W_T\cap {\mathscr W}_{inS}:
\dd_{\tilde g}(w,w_0)\leq r
\right\}.
\edeqn
Denote by  $\Psi(\psi^0,r)$ 
the set of all   derivative functions of $w\in B(w_0,r)$, i.e.,
\bgeq
\Psi(\psi^0,r)
:=\left\{\psi\in \Psi: w\in  B(w_0,r) \right\}
=\left \{\psi\in \Psi:
\displaystyle \sup_{g\in {\mathscr G}} \left|\int_{0}^{1}g(t)(\psi(t)-\psi^0(t))dt \right| \leq r
\right\},
\edeq
where $\Psi$ is the set of all derivative functions 
of $w\in {\mathscr W}_{inS}$ \footnote{Note that 
$ B(w_0,r)$ may consist of weighting functions which 
are not differentiable at some points of $[0,1]$.
Here we assume that  $w\in {\mathscr W}_{inS}$ is continuously differentiable over $[0,1]$ except a finite number of points
in which case the non-differentiable points do not affect the integral $\int_{0}^{1}g(t)(\psi(t)-\psi^0(t))dt$.
}.
Note that for $\psi\in \Psi(\psi^0,r)$, it is not necessarily 
a step-like function. Thus we propose to use step-like functions with jump points at set $\{t_2,\cdots,t_T\}$ 
to approximate it. Consequently we define  
  \bgeqn
 \label{eq: set-psi-T-derivative-weighting}
{\widetilde \Psi}(\psi^0,r)=\left \{\psi\in \Psi_T: 
\displaystyle \sup_{g\in {\mathscr G}} \left| \sum_{l=1}^T\int_{t_l}^{t_{l+1}}g(t)(\psi_l-\psi^0_{l})dt \right| \leq r
\right\},
\edeqn
where $\Psi_T$ is defined as in (\ref{defi:Psi-T}).
Since $\pi_i=w^-\left(q_i\right)-w^-\left(q_{i-1}\right),$ for $i=-m,\cdots,-1$, 
and $\pi_i=w^+\left(1-q_{i-1}\right)-w^+\left(1-q_i\right)$, for $i=0,1,\cdots, n$, 
we can write $\phi(v)$ as 
\bgeqn
\label{robust_W-BN-xi}
\phi(v)=\sup_{(\psi^-,\psi^+)\in \Psi(\psi^0,r)\times \Psi(\psi^0,r)} \left\{\sum_{i=-m}^{-1}v^{-}(\xi_i-x)\int_{q_{i-1}}^{q_{i}}\psi^-(t)dt+\sum_{i=0}^n v^{+}(\xi_i-x)\int_{1-q_{i}}^{1-q_{i-1}}\psi^+(t)dt\right \}.\;\;
\edeqn
 Next,  we will prove that $\phi(v)$ is equal to 
 $$
 \widetilde{\phi}(v):=\sup_{(\psi^-,\psi^+)\in {\widetilde \Psi}(\psi^0,r)\times {\widetilde \Psi}(\psi^0,r)} \left\{\sum_{i=-m}^{-1}v^{-}(\xi_i-x)\int_{q_{i-1}}^{q_{i}}\psi^-(t)dt+ \sum_{i=0}^n v^{+}(\xi_i-x)\int_{1-q_{i}}^{1-q_{i-1}}\psi^+(t)dt\right\}
 $$
 for any $v\in {\cal V}_{El}$.
 Since $ {\widetilde B}(w_0,r)\subset B(w_0,r)$,
 we have $\widetilde{\phi}(v) \leq \phi(v).$
 It suffices to show that $\widetilde{\phi}(v) \geq \phi(v)$.
 Let $\varepsilon>0$,
 and $w^{*-}_{\varepsilon},w^{*+}_{\varepsilon}\in B(w_0,r)$ be such that 
 \bgeqn
 \label{eq:w-sup-epsilon}
 &&\sum_{i=-m}^{-1} \left(w^{*-}_{\varepsilon}(q_{i})-w^{*-}_{\varepsilon}(q_{i-1})\right)v(\xi_i-x)
  +\sum_{i=0}^n\left( w^{*+}_{\varepsilon}(1-q_{i-1})-w^{*+}_{\varepsilon}(1-q_{i})\right) v(\xi_i-x) \nonumber \\
 &\geq& \sup_{(w^-,w^+)\in B(w_0,r)\times B(w_0,r)}\left\{\sum_{i=-m}^{-1} \left(w^{-} (q_{i})-w^{-}(q_{i-1})\right) v(\xi_i-x) \right.\nonumber \\
&& \left. +\sum_{i=0}^n\left( w^{+}(1-q_{i-1})-w^{+}(1-q_{i})\right) v(\xi_i-x)\right\}-\varepsilon.
 \edeqn
 We can find piecewise linear functions $w^{*-}_{\varepsilon, T}, w^{*+}_{\varepsilon, T}$ such that 
 $w^{*-}_{\varepsilon, T}(t_i)=w^{*-}_{\varepsilon}(t_i)$
 and $w^{*+}_{\varepsilon, T}(t_i)=w^{*+}_{\varepsilon}(t_i)$,
 $i=1,\cdots,T+1$,
$$
w^{*-}_{\varepsilon, T}(t)
:=w^{*-}_{\varepsilon}(t_{i-1})+\frac{w^{*-}_{\varepsilon}(t_i)-w^{*-}(t_{i-1})}{t_i-t_{i-1}}(t-t_{i-1}), \mbox{ for } t\in [t_{i-1},t_i], i=2,\cdots, T+1,
$$
$$
w^{*+}_{\varepsilon, T}(t)
:=w^{*+}_{\varepsilon}(t_{i-1})+\frac{w^{*+}_{\varepsilon}(t_i)-w^{*+}(t_{i-1})}{t_i-t_{i-1}}(t-t_{i-1}), \mbox{ for } t\in [t_{i-1},t_i], i=2,\cdots, T+1.
$$
For $w^{*-}_{\varepsilon},w^{*+}_{\varepsilon}\in B(w_0,r)$,
we have 
\bgeq
\dd_{\tilde g}(w^{*-}_{\varepsilon},w_0)=\sup_{g\in {\mathscr G}} \left|\int_{0}^{1} g(t)(\psi_{\varepsilon}^-(t)-\psi^0(t))dt \right|\leq r, \\
\dd_{\tilde g}(w^{*+}_{\varepsilon},w_0)=\sup_{g\in {\mathscr G}} \left|\int_{0}^{1} g(t)(\psi_{\varepsilon}^+(t)-\psi^0(t))dt \right| \leq r.
\edeq
Then 
\bgeq
&&\dd_{\tilde g}(w^{*-}_{\varepsilon,T},w_0)=\sup_{g\in {\mathscr G}} \left|\sum_{l=2}^{T+1}\int_{t_{l-1}}^{t_{l}} g(t)\left(\frac{w^{*-}_{\varepsilon}(t_l)-w^{*-}(t_{l-1})}{t_l-t_{l-1}}-\psi^0_l\right)dt \right|=\dd_{\tilde g}(w^{*-}_{\varepsilon},w_0)\leq r, \\
&&\dd_{\tilde g}(w^{*+}_{\varepsilon,T},w_0)
=\sup_{g\in {\mathscr G}} \left|\sum_{l=2}^{T+1}\int_{t_{l-1}}^{t_{l}} g(t)\left(\frac{w^{*+}_{\varepsilon}(t_l)-w^{*+}(t_{l-1})}{t_l-t_{l-1}}-\psi^0_l\right)dt \right|=
\dd_{\tilde g}(w^{*+}_{\varepsilon},w_0)\leq r,
\edeq
which means $w^{*-}_{\varepsilon}, w^{*+}_{\varepsilon}\in {\widetilde B}(w_0,r)$.
Since $q_i,1-q_i\in \{t_1,\cdots,t_{T+1}\}$ in (\ref{eq:breakpoints-q-t}), then
$w^{*-}_{\varepsilon, T}(q_i)=w^{*-}_{\varepsilon}(q_i)$, for $i=-m-1,\cdots,-1$, and $w^{*+}_{\varepsilon, T}(1-q_i)=w^{*+}_{\varepsilon}(1-q_i)$,
for $i=0,\cdots,n$,
and 
 \bgeq
 \widetilde{\phi}(v)&=& \sup_{(w^-,w^+)\in {\widetilde B}(w_0,r)\times {\widetilde B}(w_0,r)} \left\{\sum_{i=-m}^{-1} \left(w^{-}_{\varepsilon}(q_{i})-w^{-}_{\varepsilon}(q_{i-1})\right) v(\xi_i-x)\right.\\
 && \left.+\sum_{i=0}^n\left( w^{+}_{\varepsilon}(1-q_{i-1})-w^{+}_{\varepsilon}(1-q_{i})\right)v(\xi_i-x) \right\}\\ 
 &\geq & \sum_{i=-m}^{-1} \left(w^{*-}_{\varepsilon,T}(q_{i})-w^{*-}_{\varepsilon,T}(q_{i-1})\right) v(\xi_i-x) +\sum_{i=0}^n\left( w^{*+}_{\varepsilon,T}(1-q_{i-1})-w^{*+}_{\varepsilon,T}(1-q_{i})\right) v(\xi_i-x) \\
 &= & \sum_{i=-m}^{-1} \left(w^{*-}_{\varepsilon}(q_{i})-w^{*-}_{\varepsilon}(q_{i-1})\right) v(\xi_i-x)
 +\sum_{i=0}^n\left( w^{*+}_{\varepsilon}(1-q_{i-1})-w^{*+}_{\varepsilon}(1-q_{i})\right) v(\xi_i-x) \\
 &\geq& \sup_{(w^-,w^+)\in B(w_0,r)\times B(w_0,r)}\left\{\sum_{i=-m}^{-1}\left(w^{-} (q_{i})-w^{-}(q_{i-1})\right) v(\xi_i-x) \right.\\
&& \left. +\sum_{i=0}^n\left(w^{+}(1-q_{i-1})-w^{+}(1-q_{i})\right) v(\xi_i-x)\right\}-\varepsilon
=\phi(v)-\varepsilon,
 \edeq
 where the last inequality is due to (\ref{eq:w-sup-epsilon}).
This means $\widetilde{\phi}(v)\geq \phi(v)$ since $\varepsilon$ can be arbitrarily small.
This shows  $\phi(v)=\widetilde{\phi}(v)$ as desired.

\underline{\bf Step 3.}
Let $y_i :=\int_{t_i}^{t_{i+1}} g(t)dt$.
Since ${\mathscr G}$ constitutes all measurable functions with $|g(t)|\leq \tilde g(t)$ for $t\in[0,1]$,
 then we have $|y_i|\leq \int_{t_l}^{t_{l+1}} \tilde g(t)dt$.
The inequality in ${\widetilde \Psi}(\psi^0,r)$ (\ref{eq: set-psi-T-derivative-weighting}) can be reformulated as follows
\bgeqn
\sup\limits_{g\in {\mathscr G}} \left|\sum_{l=1}^T\int_{t_l}^{t_{l+1}}g(t)(\psi_l-\psi^0_{l})dt \right| 
&= \sup\limits_{y_1,\cdots,y_T} & \sum_{l=1}^T(\psi_l-\psi^0_{l})y_l \nonumber \\
&{\rm s.t.}&  |y_l|\leq \int_{t_l}^{t_{l+1}} \tilde g(t) dt,\, l=1,\cdots,T.
\edeqn
The right hand side  is a linear program. We derive its Lagrange dual. 
Let 
$\eta\in \R^{T}_+$ 
and $\theta\in \R^{T}_+$ 
denote the dual variables corresponding to 
constraints 
$-y_l\leq \int_{t_l}^{t_{l+1}} \tilde g(t) dt$ for $l=1,\cdots,T$
and 
$y_l\leq \int_{t_l}^{t_{l+1}} \tilde g(t) dt$ for $l=1,\cdots,T$ respectively.
Let
\bgeq
L(y;\eta,\theta)
&:=& \sum_{l=1}^T(\psi_l-\psi^0_{l})y_l 
- \sum_{l=1}^T \eta_l \left(-y_l -\int_{t_l}^{t_{l+1}} \tilde g(t) dt \right)
- \sum_{l=1}^T \theta_l \left(y_l-\int_{t_l}^{t_{l+1}} \tilde g(t) dt \right)\\
&=& \sum_{l=1}^T\left( \psi_l-\psi^0_{l}+\eta_l-\theta_l \right) y_l
+\sum_{l=1}^T\eta_l \int_{t_l}^{t_{l+1}} \tilde g(t) dt
+\sum_{l=1}^T \theta_l\int_{t_l}^{t_{l+1}} \tilde g(t) dt.
\edeq
The Lagrange dual can be written as
\bgeq
&\inf\limits_{\eta,\theta\geq 0} &\sum_{l=1}^T(\eta_l+\theta_l) \int_{t_l}^{t_{l+1}} \tilde g(t) dt\\
&{\rm s.t. }& \psi_l-\psi^0_{l}+\eta_l-\theta_l =0, l=1,\cdots,T.
\edeq
Thus $\phi(v)$ in (\ref{robust_W-BN-xi}) can be  reformulated as
\bgeqn
\label{eq:sub_inf_W}
\phi(v)&=&\sup_{(\psi^-,\psi^+)\in {\widetilde \Psi}(\psi^0,r)\times {\widetilde \Psi}(\psi^0,r)}
\left\{\sum_{i=-m}^{-1}v^{-}(\xi_i-x)\int_{q_{i-1}}^{q_{i}}\psi^-(t)dt
+\sum_{i=0}^n v^{+}(\xi_i-x)\int_{1-q_{i}}^{1-q_{i-1}}\psi^+(t)dt\right \} \nonumber \\
&=&\sup_{\tilde \psi^-,\tilde \psi^+, \eta,\theta}
\sum_{i=-m}^{-1} v^{-}(\xi_i-x)\sum_{l=1}^T\psi_l^-\int_{q_{i-1}}^{q_{i}}\mathds{1}_{[t_l,t_{l+1})}(t)dt 
 +\sum_{i=0}^{n} v^{+}(\xi_i-x) \sum_{l=1}^T\psi^+_l \int_{1-q_{i}}^{1-q_{i-1}}\mathds{1}_{[t_l,t_{l+1})}(t)dt \nonumber \\
 &&\quad {\rm s.t. } \quad (\ref{robust_Wra})-(\ref{robust_Wrj}),
\edeqn
where $\tilde \psi^-=(\psi^-_l,\cdots, \psi^-_T)$ and 
$\tilde \psi^+=(\psi^+_l,\cdots, \psi^+_T)$.

\underline{\bf Step 4.}
We are now ready to reformulate the constraint function in (\ref{eq:robust-V-W-computation-SAA}) 
$$
\sup_{(v,w^-,w^+)\in ({\cal V}_{El},{\cal W}_r,{\cal W}_r)} \sum_{i=-m}^n \pi_i v(\xi_i-x)
=\sup_{v\in {\cal V}_{El}} \phi(v).
$$
First, we will prove 
\bgeqn
\label{eq:phi-V2-V2widetildeM}
\sup_{v\in {\cal V}_{El}} \phi(v) 
=\sup_{v\in {\cal {\widetilde V}}_{El}} \phi(v),
\edeqn
where ${\cal {\widetilde V}}_{El}$ is defined as follows.

Let $\{y_j\}_{j=1}^{J}$ be the $j$-th entry of ranked support set 
$\mathscr S:= \bigcup_{m=1}^M( \supp(\gamma_m)\cup \supp(\eta_m))\cup\{a,b\}\cup \{0\}\cup \bigcup_{i=1}^N(\xi_i-x)\cup \bigcup_{k=1}^K\supp(A_k-a_k^{+})\cup \bigcup_{k=1}^K\supp(A_k-a_k^{-})$.
We will use $y_j$ to denote the $j$-th smallest entry of $\mathscr S$.
Let $j_{0}$ be the index satisfying $y_{j_{0}}=0$.
Denote 
\bgeq
\mathscr {\tilde V}:=\left\{ v:[\alpha,\beta]\rightarrow \R\;:
\begin{array}{ll}
&v \inmat{ is negative and convex on $[\alpha,0)$, and positive and concave on $(0,\beta]$}\\
& v \inmat{ is increasing continuous, } v(\alpha)=b_l, v(\beta)=b_r
\end{array}
\right\}.
\edeq
Let $\mathscr {\tilde V}_{J}$ denote the set of all piecewise linear functions $v\in \mathscr {\tilde V}$ with breakpoints 
$a=y_1<y_2<\cdots<y_{J}=b$,
$y_1,\cdots,y_J \in \mathscr S$.
For 
$$
{\cal V}_{El}={\cal V}^K_{ce}\cap {\cal V}^M_{pair}\cap {\cal V}_S=\left\{v\in {\mathscr {\tilde V}}: 
\begin{array}{ll}
&\mathbb E_{w^-w^+}[v(\gamma_m)]\leq \mathbb E_{w^-w^+}[v(\eta_m)], m=1,\cdots,M\\
& \bbe_{w^-w^+}[v(A_k-a_k^{+})]\leq 0, k=1,\cdots,K\\
& \bbe_{w^-w^+}[v(A_k-a_k^{-})]\geq 0,k=1,\cdots,K 
\end{array}
\right \},
$$
we define the ambiguity set
$$
{\cal {\widetilde V}}_{El}={\cal V}^K_{ce,J}\cap{\cal V}^M_{pair,J}\cap {\cal V}_S=
\left\{v_{J}\in \mathscr {\tilde V}_{J}: 
\begin{array}{ll}
&\mathbb E_{w^-w^+}[v_{J}(\gamma_m)]\leq \mathbb E_{w^-w^+}[v_{J}(\eta_m)], m=1,\cdots,M \\
& \bbe_{w^-w^+}[v_{J}(A_k-a_k^{+})]\leq 0,k=1,\cdots,K \\
& \bbe_{w^-w^+}[v_{J}(A_k-a_k^{-})]\geq 0,k=1,\cdots,K 
\end{array}
\right\}.
$$
We consider to approximate ${\cal V}_{El}$ by ${\cal {\widetilde V}}_{El}$.
The approximation function $v_J\in \mathscr {\tilde V}_J$ preserves convexity, concavity and increasing property,
and then $\mathscr {\tilde V}_J\subset \mathscr {\tilde V}$.
We choose $\gamma_m$ and $\eta_m$, $m=1,\cdots,M$ with discrete distributions,
and the breakpoints cover their 
support sets.
Since 
\bgeq
 \bbe_{w^-w^+}[v(\xi)]
  &=&\int_{0}^{\infty}w^+(\mathbb P(v(\xi)\geq t))dt
  -\int_{-\infty}^0 w^-(\mathbb P(v(\xi)\leq t))dt\\
 &=&\int_{0}^{\infty}w^+(1-F_{\xi}(t))dv^+(t)
 -\int_{-\infty}^0 w^-(F_{\xi}(t))dv^-(t),
\edeq
then
\bgeq
\mathbb E_{w^-w^+}[v_{J}(\gamma_m)]- \mathbb E_{w^-w^+}[v_{J}(\eta_m)] 
&=&\int_{y_{1}}^{y_{J}} \phi_m(y)dv_{J}(y) = \sum_{i=1}^{J-1}\phi_m(y_j)\Big(v_{J}(y_{i+1})-v_{J}(y_{i})\Big) \\
&=& \sum_{i=1}^{J-1}\phi_m(y_j)\Big(v(y_{i+1})-v(y_{i})\Big) 
=\mathbb E_{w^-w^+}[v(\gamma_m)]- \mathbb E_{w^-w^+}[v(\eta_k)],
\edeq
where $\phi_m$ is defined as in (\ref{eq:defi_phim}).
\bgeq
&& \bbe_{w^-w^+}[v_{J}(A_k-a_k^{+})]\\
&=&\int_{y_{j_0}}^{y_{J}} w^+(1-F_{A_k-a_k^{+}}(y))dv_{J}^+(y)
-\int_{y_1}^{y_{j_0}} w^-(F_{A_k-a_k^{+}}(y))dv_{J}^-(y)  \\
&=&\sum_{j={j_0}}^{J-1}w^+(1-F_{A_k-a_k^{+}}(y_j))\Big(v_{J}^+(y_{j+1})-v_{J}^+(y_{j})\Big)
 -\sum_{j=1}^{{j_0}-1}w^-(F_{A_k-a_k^{+}}(y_j)) \Big(v_{J}^-(y_{j+1})-v_{J}^-(y_{j})\Big)\\
&=&\mathbb E_{w^-w^+}[v(A_k-a_k^{+})]
\edeq
and
\bgeq
&& \bbe_{w^-w^+}[v_{J}(A_k-a_k^{-})]\\
&=&\int_{y_{j_0}}^{y_{J}} w^+(1-F_{A_k-a_k^{-}}(y))dv_{J}^+(y)
-\int_{y_1}^{y_{j_0}} w^-(F_{A_k-a_k^{-}}(y))dv_{J}^-(y)  \\
&=&\sum_{j={j_0}}^{J-1}w^+(1-F_{A_k-a_k^{-}}(y_j))\Big(v_{J}^+(y_{j+1})-v_{J}^+(y_{j})\Big)
 -\sum_{j=1}^{{j_0}-1}w^-(F_{A_k-a_k^{-}}(y_j)) \Big(v_{J}^-(y_{j+1})-v_{J}^-(y_{j})\Big)\\
&=&\mathbb E_{w^-w^+}[v(A_k-a_k^{-})].
\edeq
This shows ${\cal {\widetilde V}}_{El}\subset {\cal V}_{El}$,
and 
hence $\displaystyle  \sup_{v\in {\cal {\widetilde V}}_{El}} \phi(v)\leq \sup_{v\in {\cal V}_{El}} \phi(v)$.
Conversely,
let $\varepsilon>0$,
and $v_{\varepsilon}^*\in {\cal V}_{El}$
such that 
\bgeq
\phi(v_{\varepsilon}^*)
\geq 
\sup_{v\in {\cal V}_{El}} \phi(v)-\varepsilon.
\edeq
We can find a piecewise linear function $v_{\varepsilon,J}^{*}\in {\cal {\widetilde V}}_{El}$
such that $v^*_{\varepsilon,J} (\xi_i-x)= v^{*}_{\varepsilon}(\xi_i-x)$ for $1\leq i\leq N$.
Specifically,
for any $v\in {\cal V}_{El}$, we can construct
such a $v_{J}$ 
as
$$
v_{J}(y):=v(y_{i-1})+\frac{v(y_i)-v(y_{i-1})}{y_i-y_{i-1}}(y-y_i), \mbox{ for } y\in [y_{i-1},y_i], i=2,\cdots, J,
$$
where $y_1=a, y_{J}=b$. 
Observe that $v_{J}$ satisfies $v_{J}(y_i)=v(y_i)$ for each $i$.
Consequently
we have 
\bgeq
\sup_{v\in {\cal {\widetilde V}}_{El}} \phi(v)
\geq  \phi(v^*_{\varepsilon, J}) 
= \phi(v^*_{\varepsilon})
\geq \sup_{v\in {\cal V}_{El}} \phi(v)-\varepsilon.
\edeq
Then $\sup_{v\in {\cal {\widetilde V}}_{El}} \phi(v)\geq \sup_{v\in {\cal V}_{El}} \phi(v)$ follows as $\varepsilon$ can be arbitrarily small,
and thus (\ref{eq:phi-V2-V2widetildeM}) holds.

Next, by invoking (\ref{eq:sub_inf_W}),
we can reformulate the constraint function in (\ref{eq:robust-V-W-computation-SAA})  as 
\begin{subequations}
\begin{align}
&\sup_{(v,w^-,w^+)\in {\cal V}_{El}\times {\cal W}_r\times {\cal W}_r} \sum_{i=-m}^n \pi_i v(\xi_i-x) =\sup_{v\in {\cal V}_{El}} \phi(v) \nonumber \\
=&\sup_{v\in  {\cal {\widetilde V}}_{El}}\sup_{\tilde \psi^-,\tilde \psi^+, \eta,\theta}
\sum_{i=-m}^{-1}v^{-}(\xi_i-x)\sum_{l=1}^T\psi_l^-\int_{q_{i-1}}^{q_{i}}\mathds{1}_{[t_l,t_{l+1})}(t)dt 
+\sum_{i=0}^{n} v^{+}(\xi_i-x) \sum_{l=1}^T\psi^+_l\int_{1-q_{i}}^{1-q_{i-1}}\mathds{1}_{[t_l,t_{l+1})}(t)dt\nonumber \\
&\qquad \qquad   {\rm s.t.} \qquad 
  (\ref{robust_Wra})-(\ref{robust_Wrj}),\nonumber \\
=& \sup_{a,b} \sup_{\tilde \psi^-,\tilde \psi^+, \eta,\theta} \left\{\sum_{i=-m}^{-1} 
\Big(
(a_1(\xi_i-x)+b_1)\mathds{1}_{(-\infty,0)\cap[y_1,y_2]}(\xi_i-x) \right. \nonumber \\
& \qquad \qquad \qquad  +\sum_{j=2}^{J-1}(a_j(\xi_i-x)+b_j)\mathds{1}_{(-\infty,0)\cap(y_j,y_{j+1}]}(\xi_i-x) \Big) \sum_{l=1}^T\psi_l^- \int_{q_{i-1}}^{q_{i}}\mathds{1}_{[t_l,t_{l+1})}(t)dt \nonumber \\
& \qquad \qquad \qquad 
+\sum_{i=0}^{n} \Big(
 (a_1(\xi_i-x)+b_1)\mathds{1}_{(-\infty,0)\cap[y_1,y_2]}(\xi_i-x)\nonumber  \\
& \left. \qquad \qquad \qquad 
+\sum_{j=2}^{J-1}(a_j(\xi_i-x)+b_j)\mathds{1}_{(-\infty,0)\cap(y_j,y_{j+1}]}(\xi_i-x) \Big)
\sum_{l=1}^T\psi^+_l\int_{1-q_{i}}^{1-q_{i-1}}\mathds{1}_{[t_l,t_{l+1})}(t)dt
\right\}\nonumber \\
&\qquad \qquad   {\rm s.t.} \qquad 
  (\ref{robust_Wra})-(\ref{robust_Wrj}), ~~~ (\ref{robust_v_ob})-(\ref{robust_v_oj}).
\end{align}
\end{subequations}
The proof is complete.
\hfill $\Box$
\end{document}